\newcommand{\R}{\mathbb{R}}
\newcommand{\N}{\mathbb{N}}
\newcommand{\mX}{\mathcal{X}}
\newcommand{\mG}{\mathcal{G}}
\newcommand{\mA}{\mathcal{A}}
\newcommand{\mB}{\mathcal{B}}
\newcommand{\mN}{\mathcal{N}}
\newcommand{\PP}{\mathbb{P}}
\newcommand\red[1]{\textcolor{red}{#1}}
\newtheorem{theorem}{Theorem}
\newtheorem{lemma}[theorem]{Lemma}
\newtheorem{corollary}[theorem]{Corollary}
\theoremstyle{definition}
\newtheorem{definition}[theorem]{Definition}
\newtheorem{remark}{Remark}
\newtheorem{example}{Example}
\newtheorem{claim}{Claim}
\begin{document}

\title[Reduced BNs]
  {Reductions of discrete Bayesian networks via lumping}
\author{Linard Hoessly}

\address{Data Center of the Swiss Transplant Cohort Study,
University hospital Basel, Basel,
4031 Switzerland.}
\email{Linard.hoessly@hotmail.com {\rm (corresponding author)}}

\keywords{Lumpability, bayesian network, aggregation, reduction, directed graphical model}

\date{\today}
\maketitle
\begin{abstract}

Bayesian networks are widely utilised in various fields, offering elegant representations of factorisations and causal relationships. We use surjective functions to reduce the dimensionality of the Bayesian networks by combining states and study the preservation of their factorisation structure. We introduce and define  corresponding notions, analyse their properties, and provide examples of highly symmetric special cases, enhancing the understanding of the fundamental properties of such reductions for Bayesian networks. We also discuss the connection between this and reductions of homogeneous and non-homogeneous Markov chains. 
\end{abstract}

\maketitle

\section{Introduction}
Bayesian networks (BNs) based on directed acyclic graphs (DAGs) are a  popular statistical model, where cause and effects can directly be read off the DAG. They are applied across fields like economics, social sciences, computational biology, or computer science \cite{Wainwright06}. Their factorisation structure simplifies the treatment of the corresponding probabilities both computationally and notation-wise. Given a BN with a DAG $\mG=(V,E)$ with vertices $V$ and edges $E$, the BN associates to each vertex $v$ a discrete random variable $X_v$, where the arrows of $\mG$ capture the conditional independence (CI) assumption of the underlying probability distribution as follows,
$$
X_i \perp\!\!\!\perp X_{nd(i)\setminus pa(i)}|X_{pa(i)},
$$
where $\cdot\perp\!\!\!\perp\cdot | \cdot$ is the usual CI notion from probability theory \cite{koller2009probabilistic}, where $pa(i)$, $nd(i)$ are the parents and non-descendants of $i$ (for more details see $\S$ \ref{subs_DAGs}). The CI structures are equivalent to factorisations of the conditional probablity distributions (CPDs) along the DAG \cite{koller2009probabilistic}.

Next, we explain our setting of interest. Consider a BN as a discrete random vector $(X_v)_{v\in V}$ where for each $v\in V$, $X_v$ has the same identical states $\mA$. Consider a surjective function $f:\mA\to \mB$ that maps $\mA$ to some smaller set of states $\mB$. Let the
BN have DAG $\mG$ with corresponding factorisation of the discrete probability distribution. We introduce three situations of interest for a given BN and a function $f$
such that the reduced random vector $(U_v)_{v\in V}:=(f(X_v))_{v\in V}$ also factorises via a BN with the same DAG:

\begin{enumerate}[label=\emph{(D\arabic*)}]
\item The CI structure is preserved for $(U_v)_{v\in V}$.
\item The CI structure is preserved for all possible initial distributions (also called prior distribution \cite{koller2009probabilistic}, i.e. the distribution on the source nodes). \color{black}
\item The CI structure is preserved for all possible initial distributions such that independently of the initial distribution the CPDs of $(U_v)_{v\in V}$ are the same.
\end{enumerate}
The above notions with examples are further  explained in $\S$ \ref{sect_def_lump_GM}.
\color{black}

\hfill\break

The above notions are motivated by different applications and research directions. One interest is analytically simplifying probabilistic models, i.e., creating coarse-grained models that reduce complexity while preserving the factorisation (or CI) structure. This can be beneficial for inference tasks since learning BNs can be computationally challenging, e.g., many related learning tasks are NP-hard in general \cite{scutari2021bayesian,wainwright2008}. By choosing a simplification approach, we can proceed in different stages by gradually increasing complexity.

On the other hand, merging categorical variables might be of interest when the desired level of information is coarser than the available data. In such cases, one implicitly assumes that the corresponding function of the variables preserves the CI structure. This assumption has philosophical implications since in many fields (cf., e.g., \cite{koller2009probabilistic}), practitioners begin by drawing a directed acyclic graph (DAG) to outline the causal structure. In practice, the states of random variables are often summarized in some way (e.g., as in  example \ref{well-known-sprinkler}). This indicates that when we assume the DAG assumption holds for more precise states, we are actually working within a restricted subspace. In particular, often when we apply a discrete BN to a real-world example we treat it as an idealisation, where states are merged. The conditions developed in our research can be utilised to clarify such properties.

From another viewpoint, in probability theory reductions preserving the Markov property for Markov chains have been studied since the Fifties \cite{Burke_Rosenblatt,kemenyfinite,DEY2014,gurvits}. Our contribution generalises and connects to these fields as well.

\subsection{Previous approaches}\label{prev_appr}
We explore a dimensionality reduction approach for BNs while preserving their graphical structure. Although this has not been studied, it is related to BNs with hidden or latent variables. In such cases, certain vertices in the network are observed only in a merged form, which has been extensively investigated by Elidan and Friedman \cite{Elidan1,Elidan2,Elidan3} under the term BNs with hidden variables.
When a variable is entirely hidden, it can be viewed as a projection onto a single state. The authors assume that an observed BN is a projection where states are potentially merged \cite{Elidan1}, and their objective is to reconstruct information on the original BN. Furthermore, reductions from continuous to discrete are used as a strategy to learn the BN in \cite{Moffa_ordinal}, where projections to finite ordinary data help with the inference. However, our objective differs from the previously mentioned works, as we aim to study and establish conditions that guarantee the preservation of the factorisation structure during the dimensionality reduction process.\\

Another related and similar field are reductions of Markov chains.
Simplifying discrete-time Markov chains (DTMCs) via reduction functions while maintaining their Markov property is a fundamental procedure going back to the work of Burke and Rosenblatt from 1958 \cite{Burke_Rosenblatt}. These simplifications involve reducing the complexity of DTMCs trough state space reduction while ensuring that the resulting process remains Markovian. In the context of BNs, DTMCs can be represented in the following form:\\
\begin{center}
\begin{tikzpicture}
    \node[latent] (theta) {$X_1$} ; %
    \node[latent, right=of theta] (z) {$X_2$} ; %
    \node[latent, right=of z] (y) {$X_3$} ; %
    \node[latent, right=of y] (mu) {$X_4$} ; %
    \node[latent, right=of mu] (sigma) {...} ; %
    \edge {theta} {z} ; %
    \edge {z} {y} ; %
      \edge {y} {mu} ; %
            \edge  {mu}{sigma} ; %
\end{tikzpicture}
\end{center}
Preserving the Markov structure of DTMCs corresponds to maintaining the factorisation structure in BNs with fixed same CPDs for all edges \cite{Burke_Rosenblatt,kemenyfinite,DEY2014,gurvits}. For DTMCs, the so-called lumped chain is the process obtained when projecting the states onto a partition. However, the lumped process generally loses the Markov property  \cite{Burke_Rosenblatt}. There are two kinds of lumpability with respect to a lumping function for DTMCs:\begin{enumerate}
    \item Weak lumpability: A DTMC is weakly lumpable if the lumped chain itself is a DTMC.
    \item Strong lumpability: A DTMC is strongly lumpable if, for all initial distributions, the lumped chain is a DTMC.

    \end{enumerate}
 Lumpings find practical applications in fields requiring dimension reductions of DTMCs such as complex networks \cite{Weinan_cx}, control theory or related fields \cite{gurvits}.Recent advancements have focused on studying lumpability for non-homogeneous DTMCs (NHDTMCs) \cite{DEY2014}, broadening its applicability. Furthermore, researchers have explored the application of lumpability to complex networks, investigating its implications and developing "best approximate lumpings" for DTMCs . Remark that weak lumpability is similar to \ref{D1}, while strong lumpability is similar to \ref{D2} and \ref{D3}, but for DTMCs. In particular, weak or strong lumpability are less general notions, and a DTMC satisfies \ref{D1} as a BN if and only if the lumped process is an NHDTMC. 

\subsection{Content}
In $\S$\ref{sec_prel}, we introduce notions for BNs, reductions of BNs and and look at DTMCs/NHDTMCs as BNs. In $\S$\ref{sect_Res}, we give results on conditions ensuring that reductions preserve the factorisation structure of the DAG. In $\S$\ref{sec_ex_cases}, we go through some special cases, and in $\S$\ref{sec_disc} we discuss the reductions and context.

\subsection*{Acknowledgements}
We thank Giusi Moffa, Ioan Manolescu, Christian Mazza, and Ulrich Hansen for helpful discussions.\\
\color{black}
\section{Preliminaries}\label{sec_prel}

\subsection{Graph theory and notation for DAGs}\label{subs_DAGs}
 Let $\mG=(V, E)$ be a finite directed graph \cite{Diestel}. 
 For directed paths, the length of paths is the number of edges in the path. A cycle is a directed path where the starting vertex of the first edge equals the ending vertex of the last edge, i.e. $v_{i_1}\rightarrow v_{i_2}\rightarrow \cdots \rightarrow v_{i_{l-1}}\rightarrow v_{i_{l}}=v_{i_1}$ (potentially listed as the corresponding sequence of edges). A directed acyclic graph (DAG) is a directed graph that has no cycles. Furthermore, we let the indegree and outdegree of a vertex $v\in V$ be the number of incoming resp. outgoing edges. For $W\subseteq V$ let the vertex-induced subgraph be the graph with vertices $W$ and edges all edges in $E$ with head and tail in $W$, i.e. edges $E_W$, with $E_W:=\{e\in E|e=(x,y) \text{ and }x,y\in W\}$.\\
 
We will use the following sets of vertices associated to a vertex $v\in V$ of a DAG:\\
- $pa(v)$ is the set of parents of $v\in V$, i.e. all $w\in V$ such that $(w,v)\in E$.\\
- $de(v)$ is the set of descendants of $v\in V$, i.e. all nodes $w\in V$ such that there is a directed path $v\to v_1 \to \cdots \to w$ 
that goes from $v$ to $w$.\\
- $dde(v)$ is the set of direct descendants of $v\in V$, i.e. all nodes $w\in V$ such that there is a directed edge $v\to  w$.\\
- $nd(v)$ is the set of non-descendants of $v\in V$, which are given by $V\setminus (\{v\}\cup de(v))$.\\
 - $pr(v)$ is the set of predescessors of $v\in V$, i.e. all nodes $w\in V$ such that there is a directed path $w\to w_1\to \cdots v$ 
  that goes from $w$ to $v$.\\
  - Furthermore, we denote by $\cdot^\ast(v)$ the sets of vertices including $v\in V$, i.e.,
$$pr^*(v):=\{v\}\cup pr(v), \quad pa^*(v)=\{v\}\cup pa(v), \quad de^*(v)=\{v\}\cup de(v), \quad nd^*(v)=\{v\}\cup nd(v).$$
Let $V_p$ be the vertices with parents $V_p:=\{v\in V| pa(v)\neq \emptyset\}$, $V_s$ the so-called source nodes, i.e. $V_s:=\{v\in V| pa(v)= \emptyset\}$, and $V_{no-d}$ the so-called sink nodes, i.e. $V_{no-d}:=\{v\in V| de(v)= \emptyset\}$.

Furthermore, we will need the following notion for DAGs. Let $\mG$ be a DAG, and $v\in V$ be a vertex. Then, the depth of v is defined as
$$depth(v):=max\{k\in \N\ |\ \text{there is a } w\in V_s\text{ and a directed path from w to v of length }k\}$$
For $A\subseteq V$ a subset of the vertices we define $depth(A):=max \{ depth(v)|v\in A\}$.

\subsection{Notations}\hfill\break
For $C$ a finite set, we let $\Delta^C$ be the probability simplex in $\R^C$, and we denote by $\N$ the positive integers, and by $\N_0$ the nonnegative integers, i.e. $\N_0:=\N\cup \{0\}$.\\

Let $G=(V,E)$ be the DAG of the BN with random vector $(X_v)_{v\in V}$. 
Let
 $\mA= \{a_1,\cdots a_k\}$ denote the possible states in each vertex such that $(X_v)_{v\in V}$ takes values in $\mA^V$. If we emphasise the index, we denote the states of $X_v$ by $\mA^v= \{a^v_1,\cdots a^v_{k_v}\}$ for $v\in V$.
Let $f:\mA\to\mB$ be the reduction map, i.e. a surjection that maps some states together. Denote the cardinalities by $|\mA|=k>l=|\mB|$. This induces an equivalence relation on $\mA$ such that $\overline{a_i}=\{a_j\in\mA|f(a_i)=f(a_j)\}$, and we can identify $\overline{a_i}$ with $f(a_i)$ such that $\mB\simeq \mA/\sim$ (bijection).
Let $\mB =\{b_1,\cdots b_l\}$ be the reduced states such that $(f(X_v))_{v\in V}$ takes values in $\mB^V$. Denote by $(U_v)_{v\in V}=(f(X_v))_{v\in V}$ the projected random vector.
For a subset $W\subseteq V$ with $W=\{v_1,\cdots v_k\}$, and $a_W\in \mA^W$ we denote
$$ \PP(X_W=a_W):=\PP(x_{v_1}=a_{v_1},\cdots, x_{v_k}=a_{v_k}).$$
Furthermore, to simplify notation when the context is clear, we will allow the following abuse of notation,\color{black} where for subsets $\tilde{W}\subseteq W\subseteq V$ and for $a_W\in\mA^W$ we might denote the restriction of $a_W$ to the coordinates of $\tilde{W}$ by $a_W |_{\tilde{W}}$ or $a_{\tilde{W}}$.

\subsection{Bayesian networks}
We restrict  our treatment to random variables with discrete state space \cite{koller2009probabilistic}. 
Let $(X_v)_{v\in V}$ be a random vector indexed by $V$ taking values in the product space $\mX=\otimes_{v\in V}\mX_v$, and $\mG=(V,E)$ a DAG. Then, a (discrete) distribution factorises according to $\mG$ if 
\begin{equation}
\label{factorisation_BN}
\PP((X_v)_{v\in V})=\prod_{v\in V}\PP(X_v|X_{pa(v)}),
\end{equation}
where $\PP(X_v|X_{pa(v)})$ are the CPDs.
A BN is a pair $\mB\mN=(\mG,\PP)$ where $\PP$ factorises over $\mG$, and correspondingly one says $(X_v)_{v\in V}$ factorises w.r.t $\mG$. Then,

  the so-called prior distribution (or starting distribution) is the distribution of the random variables associated to vertices of $V_s$, i.e., $(\alpha_v)_{v\in V_s}\in\prod_{v\in V_s}\Delta^{\mA^v}$.
We will say that $(X_v)_{v\in V}$ has full support if for any possible state $x\in \mX$, the probability that $(X_v)_{v\in V}$ takes this value is nonzero.\color{black}

For a triple $(A,B,S)$ of disjoint subsets of $V$ we say that $S$ d-separates $A$ from $B$ in $\mG$ if for undirected path between a vertex of $A$ and $B$, there is a vertex $v$ in the path such that
\begin{itemize}
\item $v\in S$ and the edges of the path do not meet in $v$ head-to-head.
\item $v\not\in S$ nor any of its descendants, and the edges meet head-to-head in $v$.
\end{itemize}

The above factorisation is equivalent to both the local and the global Markov properties of a distribution with respect to the DAG $\mG$ as follows.
\begin{theorem}\label{equivalence_local_MP}\cite[Theorem 3.27]{lauritzen1996}
Let $(X_v)_{v\in V}$ be a discrete random vector. The following are equivalent:\\
- $(X_v)_{v\in V}$ factorises over the DAG $\mG$.\\
- (Local Markov property) For any $v\in V$, $(X_v)_{v\in V}$ satisfies
$$
X_v \perp\!\!\!\perp X_{nd(v)\setminus pa(v)}|X_{pa(v)}.
$$
- (Global Markov property) For any triple $(A,B,S)$ of disjoint subsets of $V$ such that $S$ d-separates $A$ from $B$ in $\mG$, $(X_v)_{v\in V}$ satisfies 
$$
X_A \perp\!\!\!\perp X_{B}|X_S.
$$ 
\end{theorem}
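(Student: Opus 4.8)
The plan is to prove the three properties equivalent through the cyclic chain of implications
\[
\text{(Global MP)} \;\Rightarrow\; \text{(Local MP)} \;\Rightarrow\; \text{(Factorisation)} \;\Rightarrow\; \text{(Global MP)},
\]
where two of the arrows amount to bookkeeping with the graph structure, while the return from factorisation to the global property carries the genuine content.

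For (Global MP) $\Rightarrow$ (Local MP) I would observe that the local statement $X_v \perp\!\!\!\perp X_{nd(v)\setminus pa(v)}\mid X_{pa(v)}$ is exactly the global statement applied to the triple $(A,B,S)=(\{v\},\,nd(v)\setminus pa(v),\,pa(v))$, so it suffices to check that $pa(v)$ d-separates $v$ from $nd(v)\setminus pa(v)$ in $\mG$. On any path leaving $v$ through an incoming edge $u\to v$, the neighbour $u$ is a parent, lies in $S=pa(v)$, and meets the path non-head-to-head, hence blocks it; on any path leaving $v$ through an outgoing edge $v\to u$, the path first descends into $de(v)$ and, since its endpoint is a non-descendant, must contain a head-to-head vertex none of whose descendants meet $S$, which again blocks it. For (Local MP) $\Rightarrow$ (Factorisation) I would order the vertices $v_1,\dots,v_n$ compatibly with the $depth$ function of $\S$\ref{subs_DAGs}, so that $v_i\in pr(v_j)$ forces $i<j$; then every predecessor of $v_i$, in particular every parent, precedes it, giving $pa(v_i)\subseteq\{v_1,\dots,v_{i-1}\}\subseteq nd(v_i)$. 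Expanding the joint law by the chain rule
\[
\PP\big((X_v)_{v\in V}\big)=\prod_{i=1}^{n}\PP\big(X_{v_i}\mid X_{v_1},\dots,X_{v_{i-1}}\big),
\]
the local Markov property (via the decomposition property of conditional independence) collapses each conditioning set to $pa(v_i)$, yielding $\PP(X_{v_i}\mid X_{v_1},\dots,X_{v_{i-1}})=\PP(X_{v_i}\mid X_{pa(v_i)})$ and hence the factorisation \eqref{factorisation_BN}.

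The substantive implication is (Factorisation) $\Rightarrow$ (Global MP), for which I would use moralisation over an ancestral set. Given a triple $(A,B,S)$ with $S$ d-separating $A$ from $B$, put $W=\bigcup_{u\in A\cup B\cup S} pr^*(u)$, the smallest predecessor-closed set containing $A\cup B\cup S$. First I would marginalise: since $V\setminus W$ contains no predecessors of $W$, these vertices can be summed out of \eqref{factorisation_BN} one sink at a time, leaving a factorisation of the marginal law of $X_W$ over the induced sub-DAG $\mG_W$. Next I would moralise $\mG_W$ by joining any two vertices with a common child and dropping orientations, obtaining an undirected graph $\mG_W^m$ over whose cliques the density factorises. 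The combinatorial heart is the lemma that, on the ancestral set $W$, d-separation of $A$ and $B$ by $S$ in $\mG$ coincides with ordinary graph separation of $A$ and $B$ by $S$ in $\mG_W^m$; granting it, the clique factorisation together with the separation yields $X_A\perp\!\!\!\perp X_B\mid X_S$ by the factorisation-implies-global-Markov direction for undirected graphical models.

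I expect the main obstacle to be this last implication, and within it the equivalence between d-separation in $\mG$ and separation in the moralised ancestral graph $\mG_W^m$: one must show that every active path in the DAG survives as a connecting path after marrying parents and restricting to $W$, and conversely that the collider clause in the definition of d-separation, which refers to descendants of head-to-head vertices, is precisely what is captured by first passing to the ancestral set and only then moralising. Checking that marginalising to $W$ genuinely preserves the factorisation and that the surviving factors are clique-supported is routine but must be carried out carefully, since it is what legitimises reducing the directed problem to the undirected one.
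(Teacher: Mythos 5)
The paper does not prove this statement: it is quoted verbatim from the cited reference (Lauritzen, \emph{Graphical Models}, Theorem 3.27), so there is no in-paper proof to compare against. Your sketch is precisely the standard argument from that source --- the cycle (Global) $\Rightarrow$ (Local) by checking that $pa(v)$ d-separates $v$ from $nd(v)\setminus pa(v)$, (Local) $\Rightarrow$ (Factorisation) by the chain rule along a topological order plus the decomposition property of conditional independence, and (Factorisation) $\Rightarrow$ (Global) via marginalisation to the ancestral set followed by moralisation and the undirected global Markov property --- and it is sound as outlined. The only point worth making explicit in the first implication is why the first head-to-head vertex $c$ on a path leaving $v$ through an outgoing edge has no descendant in $S=pa(v)$: since $c\in de(v)$, any descendant of $c$ lying in $pa(v)$ would close a directed cycle through $v$, contradicting acyclicity.
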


If we consider all possible BNs for a given DAG structure $\mG$, e.g., as in \cite{Allmanid}, the CPDs $(\PP(X_v|X_{pa(v)}))_{v\in V}$ act as parameters of the DAG model, such that we denote the space of all possible parameters as $\Theta$. Then, the parametrization map is a map
$$\theta:\Theta\to \Delta^{(\prod_{v\in V}n_v)-1},$$
where $n_v$ is the number of elements of the discrete state space of $X_v$ for $v\in V$  \cite{Allmanid}. In our setting for $\mA$ resp. $\mB$ $n_v$ equals $k$ resp. $l$. 
Also, we write $im(\mG):=\theta(\Theta)$, which is typically a proper subset of $\Delta^{(\prod_{v\in V}n_v)-1}$, and for state spaces $\mA$ fix we write $im(\mG,\mA)$ (and for $\mB$ accordingly).
We also recall that while two Markov-equivalent DAGs $\mG_1,\mG_2$ might have different parametrisation maps, $im(\mG_1)=im(\mG_2)$.

Furthermore, in the following we assume the BN $\mB\mN$ has full support. This such that, in accordance with Markov theory when we consider a BN $\mB\mN=(\mG,\PP)$ that represents the random variable $(X_v)_{v\in V}$ we can replace the original initial distribution $(\alpha_v)_{v\in V_s}\in\prod_{v\in V_s}\Delta^{\mA^v}$ and replace it by another initial distribution $(\tilde{\alpha}_v)_{v\in V_s}\in\prod_{v\in V_s}\Delta^{\mA^v}$. We denote this by its expectation operator $\mathbb{E}_{\tilde{\alpha}}(\cdot)$, its probability by $\PP_{\tilde{\alpha}}(\cdot)$, by writing the BN as $(X_v[\tilde{\alpha}])_{v\in V}$, which we call the $(\tilde{\alpha}_v)_{v\in V_s}$-changed random variable.
\begin{remark}\label{full_supp_rem1}
Note that for the above we need all CPDs to be well-defined, hence we assume full support.
\end{remark}
 Furthermore, for a given BN $\mB\mN=(\mG,\PP)$ with states in $\mA$, we let $im(\mG,\mB\mN)|_{\prod_{v\in V_s}\Delta^{\mA^v}}$ denote the restriction of the image of all BNs when we only change initial distributions.

\subsection{Lumping of directed graphical models}\label{sect_def_lump_GM}
Consider a BN $\mB\mN=(\mG,\PP)$ with discrete random vector $(X_v)_{v\in V}$ with full support. Assume each random variable $X_v$ has the same possible states $\mA$ and consider a surjective function $f$ that maps $\mA$ to some smaller set of states $\mB$.  

We are interested in conditions on the reduction function $f$ such that the random vector $(U_v)_{v\in V}:=(f(X_v))_{v\in V}$ has a factorisation for the DAG $\mG$. 
\begin{definition}\label{def_Di}
We distinguish three situations for the BN $\mB\mN=(\mG,\PP)$ with $(X_v)_{v\in V}$ and reduction function $f:\mA\to\mB$:
\begin{enumerate}[label=\emph{(D\arabic*)}]
\item\label{D1} $(U_v)_{v\in V}$ factorises over the DAG $\mG$.
\item\label{D2} For any initial distribution $\tilde{\alpha}
\in\prod_{v\in V_s}\Delta^{\mA^v}$,

$(U_v[\tilde{\alpha}])_{v\in V}$ factorises over the DAG $\mG$.

 \color{black}
\item\label{D3} For any initial distribution $\tilde{\alpha}\in\prod_{v\in V_s}\Delta^{\mA^v}$, 
$(U_v[\tilde{\alpha}])_{v\in V}$ factorises over the DAG $\mG$ with the same CPDs (apart from initial distributions) independently of $\tilde{\alpha}$.

\end{enumerate}
Correspondingly, we will say that \ref{D1} (resp.  \ref{D2}, \ref{D3}) holds for $(\mB\mN,f)$.
\end{definition}

\begin{example} \label{well-known-sprinkler}To start, consider the well-known example of a BN with the DAG
\begin{center}
\begin{tikzpicture}
 \node[latent] (x) {$X_3$};%
 \node[latent,above=of x,xshift=-1cm,fill] (y) {$X_1$}; %
 \node[latent,above=of x,xshift=1cm] (z) {$X_2$}; %

 \edge {y,z} {x}  
  \edge {z} {y}  
 \end{tikzpicture}

\end{center}where $X_1$ stands for \textbf{sprinkler}, $X_2$ for \textbf{rain}, and $X_3$ for \textbf{wet grass}.
Imagine starting not only with the clear zero/one resp. on/off states, but e.g., 3 states as 0-1mm/h, 1-3 mm/h,  and more than 3 mm/h (resp. 0-1mm, 1-3 mm,  and more than 3 mm for $X_3$). 
Then we map these states by always joining the last two, giving clear zero/one resp. on/off states corresponding to a reduction as in Definition \ref{def_Di}.
\end{example}

\begin{remark}\label{full_supp_rem_wrt_red}
Note that \ref{D1} is completely independent from the assumption of full support. \ref{D2}/\ref{D3} only need that the corresponding CPDs are defined. Hence, whenever all CPDs are defined, \ref{D2}/\ref{D3} are defined equivalently. For the convenience of the reader we assume full support as a natural setting such that the corresponding results apply also to random vectors that factor w.r.t. a DAG.\end{remark}

\begin{example}\label{simple_ex}
Consider the BN $\mB\mN=(\mG,\PP)$ $(X_v)_{v\in V}$ with DAG $\mG:=v_1 \to v_2$ with $\mA=\{a_1,a_2,a_3\}$, $\mB=\{b_1,b_2\}$, and $f:\mA\to \mB$ with $f(a_1)=f(a_2)=b_1,f(a_3)=b_2$ and CPD
$$(\PP(X_2=x_2|X_1=x_1))_{x_1,x_2}=\begin{pmatrix} 1/2& 1/2 &0 \\ 0 &1/2 &1/2\\1/2& 1/2& 0 \end{pmatrix},$$
where for the moment we do not consider fix initial distribution. We note the following:
\begin{itemize}
\item For any initial distribution, the reduction $(f(X_v))_{v\in V}$ factorises w.r.t. the same DAG.
\item Hence, $(\mB\mN,f)$ satisfies both \ref{D1} and \ref{D2}.
\end{itemize}
In order to show that \ref{D3} does not hold,
 consider the parametrisation of $\Delta_2$ on $\Delta_{\mA^{v_1}}$ via the map
$$\begin{pmatrix} x \\ y \end{pmatrix} \to \begin{pmatrix} xy \\ x(1-y) \\ 1-x \end{pmatrix}.$$
As we map the first two states to $b_1$, clearly this factorises, and the following holds for $0<x\leq1$:
$$\PP(Y_2=b_1|Y_1=b_1)=\frac{\PP(Y_2=b_1,Y_1=b_1)}{\PP(Y_1=b_1)}=\frac{xy+1/2x(1-y)}{x}=1/2+1/2y,$$
which depends on the initial distribution. Hence, \ref{D3} does not hold.
\end{example}

We have the following implications for the notions.
\begin{lemma}\label{strength_Ds}
Consider a BN $\mB\mN=(\mG,\PP)$ with random vector $(X_v)_{v\in V}$ and reduction function $f:\mA\to\mB$. Then, the following implications hold for $(\mB\mN,f)$:\\
$$\ref{D3}\implies \ref{D2}\implies \ref{D1}$$
Furthermore, none are equivalences.
\end{lemma}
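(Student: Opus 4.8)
The plan is to prove the two implications $\ref{D3}\implies\ref{D2}$ and $\ref{D2}\implies\ref{D1}$, and then to exhibit examples witnessing that neither arrow reverses. The two forward implications should be essentially immediate from the definitions, so the substance of the lemma lies in separating the three notions by example. First I would dispatch $\ref{D2}\implies\ref{D1}$: by definition $\ref{D2}$ asks that $(U_v[\tilde\alpha])_{v\in V}$ factorise over $\mG$ for \emph{every} initial distribution $\tilde\alpha\in\prod_{v\in V_s}\Delta^{\mA^v}$, whereas $\ref{D1}$ asks only that $(U_v)_{v\in V}$ factorise for the single given prior. Taking $\tilde\alpha$ to be the actual initial distribution of $\mB\mN$ recovers $\ref{D1}$ as a special case, so the implication holds trivially. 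For $\ref{D3}\implies\ref{D2}$ the observation is that $\ref{D3}$ is literally $\ref{D2}$ augmented with the extra requirement that the resulting CPDs be independent of $\tilde\alpha$; dropping that requirement gives exactly $\ref{D2}$.

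Next I would argue that the implications are strict. For $\ref{D2}\not\Longrightarrow\ref{D3}$ I can invoke Example~\ref{simple_ex} directly: there $(\mB\mN,f)$ satisfies $\ref{D2}$ (indeed for every initial distribution the reduced vector factorises over $v_1\to v_2$), yet the computation $\PP(U_2=b_1\mid U_1=b_1)=1/2+\tfrac12 y$ shows the reduced CPD depends on the prior through $y$, so $\ref{D3}$ fails. This gives a clean, already-verified witness to the non-equivalence of $\ref{D2}$ and $\ref{D3}$.

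The genuinely new work is a witness for $\ref{D1}\not\Longrightarrow\ref{D2}$. The idea is to build a BN on a small DAG (again $v_1\to v_2$ suffices) together with a reduction $f$ and a \emph{specific} prior $\alpha$ for which $(U_v)$ happens to factorise, while some other prior $\tilde\alpha$ destroys the conditional-independence/factorisation structure of the reduced vector. Concretely I would pick a transition matrix whose rows, after merging the $f$-classes of the target states, agree only when the source marginal is the particular $\alpha$ (so that the lumped conditional law $\PP(U_2\mid U_1{=}b)$ is well defined and matches the joint for that prior), but for a generic $\tilde\alpha$ the two preimage states of a merged source class induce different conditional distributions on $U_2$, breaking $U_2\perp\!\!\!\perp (\text{the discarded information in } U_1)$. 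Verifying factorisation reduces, on this two-node DAG, to checking that $\PP_{\alpha}(U_2\mid U_1)$ is consistent with the marginalised joint for $\alpha$ but not for $\tilde\alpha$.

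The main obstacle is the last construction: one must arrange a single initial distribution at which the reduced process is accidentally Markov/factorising, while ensuring it fails for an open set of other priors — this is precisely the phenomenon separating weak from strong lumpability in the Markov-chain literature (cf.\ \cite{Burke_Rosenblatt}), and the delicate point is choosing the merged transition probabilities so that the lumpability condition is a nontrivial equation in the prior rather than an identity. I expect that a $3\to 2$ state reduction on $v_1\to v_2$, with the merge of source states tuned so that the lumped transition operator depends on the relative weights of the two merged preimages, yields such an example after a short direct computation.
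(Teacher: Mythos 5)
Your two forward implications and your use of Example \ref{simple_ex} to separate \ref{D2} from \ref{D3} are correct and coincide with what the paper does. The gap is in your proposed witness for $\ref{D1}\not\Longrightarrow\ref{D2}$: you claim the two-node DAG $v_1\to v_2$ suffices, but on that DAG the factorisation $\PP(U_1,U_2)=\PP(U_1)\PP(U_2\mid U_1)$ is an identity satisfied by \emph{every} joint distribution --- the complete DAG on two vertices encodes no conditional independence statement at all. Consequently every reduction of every BN on $v_1\to v_2$ satisfies both \ref{D1} and \ref{D2} (the paper notes exactly this in Example \ref{simple_ex}), so no choice of transition matrix or prior can separate them there. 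The ``discarded information in $U_1$'' that you want $U_2$ to depend on is not a vertex of the reduced model, so such a dependence is irrelevant to whether $(U_1,U_2)$ factorises over $\mG$.

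To separate \ref{D1} from \ref{D2} you need a DAG carrying a nontrivial CI constraint, hence at least three vertices; the paper uses the chain $v_1\to v_2\to v_3$ (Example \ref{ex_D1_not_D2}), where factorisation forces $U_3\perp\!\!\!\perp U_1\mid U_2$ and the resulting equations genuinely depend on the prior at $v_1$, so one can tune the CPDs so that they hold for one prior but fail for another. Your Markov-chain intuition (weak versus strong lumpability) is the right one, but that phenomenon only manifests from the third time step onward; the analogous minimal BN witness is the length-two chain, not the single edge. With that correction the rest of your argument goes through.
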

\begin{proof}
The implications follow by definition as we add more and more conditions. That \ref{D1} is not equivalent to \ref{D2} follows by a BN that satisfies \ref{D1} but not \ref{D2}, which is clear by example \ref{ex_D1_not_D2}. Similarly, the lack of equivalence between \ref{D2} and \ref{D3} follows, e.g., from example \ref{simple_ex}.
\end{proof}

 \subsection{DTMCs as BNs}
DTMCs are sequences of random variables $(X_n)_{n\in\N}$ with values in a state space $\mA$ (usually finite or countable), where the probability to transition from one state to another only depends on the current state \cite{norris}, detemined by the transition matrix $P$. DTMCs as a BNs are illustrated in $\S$ \ref{prev_appr}. Note however that for DTMCs all CPDs are always the same.
Higher-order DTMCs can also be considered as BNs, e.g., an order two DTMC has transition probabilities that depend on the last two previous states, giving the following DAG.\\

\begin{center}
\begin{tikzpicture}
    \node[latent] (theta) {$X_1$} ; %
    \node[latent, right=of theta] (z) {$X_2$} ; %
    \node[latent, right=of z] (y) {$X_3$} ; %
    \node[latent, right=of y] (mu) {$X_4$} ; %
    \node[latent, right=of mu] (sigma) {$X_5$} ; %
        \node[latent, right=of sigma] (sigm2) {...} ; %
    \edge {theta} {z} ; %
        \edge[bend left=35] {theta} {y} ; %
    \edge {z} {y} ; %
            \edge[bend left=35] {z} {mu} ; %
      \edge {y} {mu} ; %
                  \edge[bend left=35] {y} {sigma} ; %
            \edge  {mu}{sigma} ; %
                              \edge {sigma} {sigm2} ; %
            \edge[bend left=35]  {mu}{sigm2} ; %
\end{tikzpicture}
\end{center}
NHDTMCs are similar in that transition probabilities of states depend only on the current state, but are time-dependent and given by a sequence of transition matrices $(P_n)_{n\in\N}$. In particular their DAG structure remains the same as the one of DTMCs illustrated in $\S$ \ref{prev_appr}.
Therefore, a DTMC as a BN satisfies \ref{D1} iff the lumped process is a NHDTMC. Some consequences and examples are given in $\S$ \ref{sec_ex_cases_DTMCs} for DTMCs and in $\S$ \ref{subsec_NHDTMCs} for NHDTMCs.

\section{Results}\label{sect_Res}
We focus on random variables with the same underlying space $\mA$ 
and reduce them via a map $f:\mA\to\mB$.
We start with general considerations on reductions in $\S$ \ref{General_red_sect}, and then give conditions on BNs for \ref{D1}, \ref{D2}, and \ref{D3} in $\S$ \ref{res_D1}, $\S$ \ref{res_D2}, and $\S$ \ref{res_D3}.
\subsection{Generalities on projected BNs}\label{General_red_sect}
Let $\hat{f}(\cdot)$ denote the image measure induced on the probability distributions through $f$ (cf., e.g., \cite[Definition 7.7]{schilling2017measures}). First, we can reformulate \ref{D1},\ref{D2} and \ref{D3} by connecting it to the set of all BNs of a DAG $\mG$ on states $\mB$.
\begin{lemma}\label{conn_Markov}
Consider a BN $\mB\mN=(\mG,\PP)$ with full support, random vector $(X_v)_{v\in V}$ and a projection $f:\mA\to \mB$, with $(U_v)_{v\in V}:=(f(X_v))_{v\in V}$. Let $z\in \Delta^{\prod_{v\in V}\mA}$ equal the distribution of $\mB\mN$.Then,
\begin{itemize}
\item \ref{D1} holds for $(\mN\mB,f)$ if and only if $\hat{f}(z)\subseteq im(\mG,\mB)$
\item \ref{D2} holds for $(\mN\mB,f)$ if and only if $\hat{f}(im(\mG,\mB\mN)|_{\prod_{v\in V_s}\Delta^{\mA^v}})\subseteq im(\mG,\mB)$
\item \ref{D3} holds for $(\mN\mB,f)$ if and only if $\hat{f}(im(\mG,\mB\mN)|_{\prod_{v\in V_s}\Delta^{\mA^v}})\subseteq im(\mG,\mB)$ and the nontrivial CPDs in the image of $\hat{f}(im(\mG,\mA)|_{\prod_{v\in V_s}\Delta^{\mA^v}})$ are always the same.
\end{itemize}
\end{lemma}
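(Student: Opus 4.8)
The plan is to unwind each of \ref{D1}, \ref{D2}, \ref{D3} directly against the definition of $im(\mG,\cdot)$ as the image of the parametrisation map, so that all three statements become pure translations between ``factorises over $\mG$'' and ``lies in $im(\mG,\mB)$''. The one bookkeeping device I would fix at the outset is the reduction map itself: write $F:=f^{\times V}:\mA^V\to\mB^V$ for the coordinatewise application of $f$, so that $\hat f$ is the pushforward $\mu\mapsto F_\ast\mu$ on $\Delta^{\mA^V}$. With $(X_v)_{v\in V}\sim z$, the reduced vector $(U_v)_{v\in V}=(f(X_v))_{v\in V}$ has law exactly $\hat f(z)$; this identity is what connects the probabilistic statements \ref{D1}--\ref{D3} to the geometric containments on the right.

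Next I would record the dictionary that does all the work: by the very definition of $im(\mG,\mB)=\theta(\Theta)$ as the set of distributions on $\mB^V$ arising from CPDs along $\mG$, a distribution on $\mB^V$ factorises over $\mG$ if and only if it is an element of $im(\mG,\mB)$ (equivalently, by Theorem \ref{equivalence_local_MP}, if and only if it satisfies the local/global Markov property). Granting this, \ref{D1} is immediate: the reduced law is $\hat f(z)$, and \ref{D1} says precisely that this single law factorises, i.e. $\hat f(z)\in im(\mG,\mB)$, reading the singleton in place of $\subseteq$.

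For \ref{D2} I would use that, as $\tilde\alpha$ ranges over $\prod_{v\in V_s}\Delta^{\mA^v}$, the law of the $\tilde\alpha$-changed vector $(X_v[\tilde\alpha])_{v\in V}$ ranges over exactly $im(\mG,\mB\mN)|_{\prod_{v\in V_s}\Delta^{\mA^v}}$; this is the definition of that restricted image, and full support (Remark \ref{full_supp_rem1}) is what guarantees every such law is well defined. Applying $\hat f$ together with the dictionary of the previous paragraph, the assertion ``$(U_v[\tilde\alpha])_{v\in V}$ factorises for every $\tilde\alpha$'' becomes ``$\hat f(w)\in im(\mG,\mB)$ for every $w$ in that restricted image'', which is exactly the containment $\hat f(im(\mG,\mB\mN)|_{\prod_{v\in V_s}\Delta^{\mA^v}})\subseteq im(\mG,\mB)$.

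Finally, \ref{D3} is \ref{D2} together with invariance of the CPDs, so the containment half is already established and only the extra clause remains; here the word \emph{nontrivial} is where I expect the main (though mild) obstacle. The source CPDs of a reduced BN are just the reduced priors and genuinely depend on $\tilde\alpha$, so they must be excluded, whereas for $v\in V_p$ the CPD $\PP(U_v\mid U_{pa(v)})$ is recovered from the joint law wherever the conditioning event has positive probability. I would therefore argue that ``the reduced BN has the same CPDs apart from initial distributions, independently of $\tilde\alpha$'' is precisely the statement that the nontrivial CPDs attached to the elements of the restricted image $\hat f(im(\mG,\mB\mN)|_{\prod_{v\in V_s}\Delta^{\mA^v}})$ all coincide (reading the $im(\mG,\mA)|_{\prod_{v\in V_s}\Delta^{\mA^v}}$ of the statement as this same fixed-$\mB\mN$, varying-prior family). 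The only care needed is that conditional probabilities are read off on the support of each reduced law, which is exactly where positivity of the conditioning events, inherited from the full-support hypothesis, is used. Combining the three bullets then yields the stated equivalences.
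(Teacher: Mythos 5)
Your proposal is correct and matches the paper's approach: the paper states Lemma \ref{conn_Markov} without any proof, treating it as a direct translation of Definition \ref{def_Di} into the language of pushforward measures and the parametrisation image $im(\mG,\mB)$, which is exactly the unwinding you carry out. Your reading of $\hat{f}(im(\mG,\mA)|_{\prod_{v\in V_s}\Delta^{\mA^v}})$ in the third bullet as the fixed-$\mB\mN$, varying-prior family (so as to match the second bullet) correctly identifies what appears to be a notational slip in the statement, and your care about nontrivial CPDs being read off only on positive-probability conditioning events is the right place to invoke the full-support hypothesis.
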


Reductions of BNs typically lose their factorisation in the following sense. Recall that Markov equivalence classes of BNs are characterised through the skeleton and immoralities of the DAG (cf., e.g., \cite[Theorem 3.8]{koller2009probabilistic} or \cite{frydenberg}).  Then, for many DAGs $\mG$ and any nontrivial reduction function there are always some BNs whose reduction does not factorise, i.e. where $(\mN\mB,f)$ does not satisfy \ref{D1}. Hence, by Lemma \ref{strength_Ds} neither do \ref{D2} or \ref{D3} hold. The proof is in the Appendix $\S$ \ref{proof_many_non_D1s}.

\begin{theorem}\label{many_non_D1s}
Consider a connected DAG $\mG=(V,E)$, $|V|\geq 3$ which has at least one connected induced subgraph on three vertices whose skeleton is not a complete graph and which is not a v-structure. Consider a surjective reduction function $f:\mA\to \mB$ with $|\mA |>|\mB |>1$. Then
\begin{equation}\label{no_incl_D1}\hat{f}(im(\mG,\mA))\not\subseteq im(\mG,\mB).\end{equation}
\end{theorem}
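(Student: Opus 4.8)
The plan is to prove the non-inclusion \eqref{no_incl_D1} by exhibiting a single BN $z\in im(\mG,\mA)$ whose reduction $\hat f(z)$ violates one of the conditional independences that factorisation over $\mG$ would force (via the global Markov property of Theorem \ref{equivalence_local_MP}), so that $\hat f(z)\notin im(\mG,\mB)$. First I would extract a convenient triple from the hypothesis: a connected induced subgraph on three vertices has skeleton either a triangle or a path, and ``not complete'' leaves a path $v_1 - v_2 - v_3$ with middle vertex $v_2$, while ``not a v-structure'' excludes the collider orientation. Hence the triple is a chain $v_1\to v_2\to v_3$ or a fork $v_1\leftarrow v_2\to v_3$; in both cases $v_1,v_3$ are non-adjacent and $v_2\to v_1$ or $v_2\to v_3$ holds.

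Next I would localise the problem to this triple by \emph{freezing}. Choose a BN in which every vertex $w\notin\{v_1,v_2,v_3\}$ is deterministically equal to a fixed state $a_1$ (a legitimate, if non-full-support, choice of CPDs, hence a point of $im(\mG,\mA)$; recall $im(\mG,\mA)=\theta(\Theta)$ allows degenerate parameters), while the CPDs on $\{v_1,v_2,v_3\}$ remain free. Under such $z$ every reduced coordinate $U_w$, $w\notin\{v_1,v_2,v_3\}$, is a.s.\ constant. I then claim that factorisation of $\hat f(z)$ over $\mG$ forces $U_{v_1}\perp\!\!\!\perp U_{v_3}\mid U_{v_2}$: since $v_1,v_3$ are non-adjacent, one of them is a non-descendant of the other, and taking $S:=pa(v_3)$ (if $v_1\in nd(v_3)$) or $S:=pa(v_1)$ (otherwise) gives, by the standard fact that a vertex's parents d-separate it from its non-descendants (the graphical content of the local Markov property in Theorem \ref{equivalence_local_MP}), a d-separator of $v_1$ from $v_3$. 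In both orientations this $S$ contains $v_2$, while $S\setminus\{v_2\}\subseteq V\setminus\{v_1,v_2,v_3\}$ is frozen. The global Markov property yields $U_{v_1}\perp\!\!\!\perp U_{v_3}\mid U_S$, which, as $U_{S\setminus\{v_2\}}$ is a.s.\ constant, is equivalent to $U_{v_1}\perp\!\!\!\perp U_{v_3}\mid U_{v_2}$.

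Finally I would build explicit CPDs on the triple that violate this CI, using $|\mA|>|\mB|>1$: some fibre of $f$ has at least two elements, so after relabelling $f(a_1)=f(a_2)=b_1$ and $f(a_3)=b_2\neq b_1$. In the chain case, let $X_{v_1}$ be uniform on $\{a_1,a_3\}$, set $X_{v_2}=a_1$ if $X_{v_1}=a_1$ and $X_{v_2}=a_2$ if $X_{v_1}=a_3$, and set $X_{v_3}=a_1$ if $X_{v_2}=a_1$ and $X_{v_3}=a_3$ if $X_{v_2}=a_2$ (with all frozen parents ignored). Then $(U_{v_1},U_{v_2},U_{v_3})$ equals $(b_1,b_1,b_1)$ or $(b_2,b_1,b_2)$ equiprobably, so $U_{v_2}\equiv b_1$ while $U_{v_1},U_{v_3}$ are perfectly correlated given $U_{v_2}=b_1$; the fork case is identical with $X_{v_1},X_{v_3}$ both copying $X_{v_2}\in\{a_1,a_2\}$. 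In either case $U_{v_1}\not\perp\!\!\!\perp U_{v_3}\mid U_{v_2}$, contradicting the previous paragraph, whence $\hat f(z)\notin im(\mG,\mB)$.

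The step I expect to require the most care is the reduction to $U_{v_1}\perp\!\!\!\perp U_{v_3}\mid U_{v_2}$. The independence is immediate in the three-vertex \emph{subgraph}, but in the full DAG there may be further directed paths from $v_1$ to $v_3$ through other vertices that $v_2$ alone does not block, so $v_2$ need not d-separate $v_1$ from $v_3$ in $\mG$. The freezing device is precisely what repairs this: it lets me enlarge the conditioning set to a genuine d-separator $pa(v_1)$ or $pa(v_3)$ at no probabilistic cost, since the added variables are deterministic. Checking that one of these parent sets is always a valid d-separator containing $v_2$, uniformly over the chain/fork orientations and the possible ancestor relationships in $\mG$, is the technical heart of the argument; the explicit CPD construction is then routine.
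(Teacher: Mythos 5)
Your argument is correct in substance and reaches the conclusion by a genuinely different route from the paper. The paper first reduces $f$ to a map merging exactly two states (by writing a general surjection as a composition of such maps), then localises to the three-vertex induced subgraph through a marginalisation lemma (Lemma \ref{lem_marginal}, showing $im(\mG_W,\mA)\subseteq \hat{\pi_W}(im(\mG,\mA))$) combined with the commutation of projection and reduction (Lemma \ref{lem_commute}), collapses the chain and fork cases via Markov equivalence, and finishes with the explicit numerical counterexample of Example \ref{ex_not_D1}. You instead localise by \emph{freezing}: every vertex outside the triple is made deterministic, and the conditional independence forced by factorisation is extracted from the local Markov property at $v_3$ (or $v_1$) with separator $pa(v_3)$ (or $pa(v_1)$), the extra conditioning variables being almost surely constant. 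This buys two things: you never need the composition reduction on $f$ (two states in a common fibre plus one state in a different fibre suffice directly), and your separator is a genuine parent set, so additional paths between $v_1$ and $v_3$ through the rest of $\mG$ cause no difficulty --- precisely the point at which the paper's Lemma \ref{lem_marginal} has to impose a degree condition on the middle vertex. In exchange, the paper's route produces a full-support witness and a reusable marginalisation lemma, whereas your witness is degenerate (which the statement of the theorem permits, since $im(\mG,\mA)=\theta(\Theta)$ includes degenerate parameters).

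One small repair is needed in the fork sub-case: if $X_{v_1}$ and $X_{v_3}$ literally copy $X_{v_2}\in\{a_1,a_2\}$, then $U_{v_1}=U_{v_3}=b_1$ almost surely (since $f$ collapses $a_1$ and $a_2$) and no conditional dependence appears. The children must instead apply the map $a_1\mapsto a_1$, $a_2\mapsto a_3$, exactly as $X_{v_3}$ does in your chain construction; this yields the joint law $(b_1,b_1,b_1)$ or $(b_2,b_1,b_2)$ equiprobably that you correctly state as the target. With that one-line change the counterexample, and hence the proof, is valid.
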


\subsection{General observations}\label{res_obs}
We start with an observation for states where the preimage of $f$ is a singleton.
\begin{lemma}\label{lemma_preimage_singleton}
Consider a BN $\mB\mN=(\mG,\PP)$ with random vector $(X_v)_{v\in V}$, $f:\mA\to \mB$, and $(U_v)_{v\in V}:=(f(X_v))_{v\in V}$. Assume that for $b_{pa(v)}\in \mB^{pa(v)}$, $f^{-1}(b_{pa(v)})=a_{pa(v)}$ with $a_{pa(v)}\in \mA^{pa(v)}$. 
Then, for all $b_{nd^*(v)}\in \mB^{nd^*(v)}$ with $b_{nd^*(v)}|_{pa(v)}=b_{pa(v)}$ and $\PP(U_{nd(v)}=b_{nd(v)})>0$ we have the following
\begin{equation}\label{equ_007}
\PP(U_v=b_v|U_{nd(v)}=b_{nd(v)})=\PP(U_v=b_v|U_{pa(v)}=b_{pa(v)}).
\end{equation}
\end{lemma}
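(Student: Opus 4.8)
The plan is to reduce everything to the local Markov property of the original BN (Theorem \ref{equivalence_local_MP}) and then push the resulting conditional independence forward through $f$. The single idea driving the argument is that the singleton-preimage assumption $f^{-1}(b_{pa(v)})=a_{pa(v)}$ makes the events $\{U_{pa(v)}=b_{pa(v)}\}$ and $\{X_{pa(v)}=a_{pa(v)}\}$ literally coincide, so that conditioning on the reduced parents is identical to conditioning on the original parents.

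First I would set $R:=nd(v)\setminus pa(v)$, so that $nd(v)=pa(v)\sqcup R$ and the conditioning event $\{U_{nd(v)}=b_{nd(v)}\}$ splits as $\{U_{pa(v)}=b_{pa(v)}\}\cap\{U_R=b_R\}$. Using the singleton-preimage hypothesis, the first factor equals $\{X_{pa(v)}=a_{pa(v)}\}$, so that
$$\PP(U_v=b_v\mid U_{nd(v)}=b_{nd(v)})=\PP(U_v=b_v\mid X_{pa(v)}=a_{pa(v)},\,U_R=b_R),$$
and likewise the right-hand side of the claimed identity \eqref{equ_007} becomes $\PP(U_v=b_v\mid X_{pa(v)}=a_{pa(v)})$.

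Next I would invoke the local Markov property, which gives $X_v\perp\!\!\!\perp X_R\mid X_{pa(v)}$. Since $U_v=f(X_v)$ and $U_R=f(X_R)$ are deterministic (coordinatewise) functions of $X_v$ and $X_R$ respectively, this conditional independence descends to $U_v\perp\!\!\!\perp U_R\mid X_{pa(v)}$: concretely, one sums the factorised conditional law $\PP(X_v=x_v,X_R=x_R\mid X_{pa(v)}=a_{pa(v)})=\PP(X_v=x_v\mid X_{pa(v)}=a_{pa(v)})\,\PP(X_R=x_R\mid X_{pa(v)}=a_{pa(v)})$ over the fibres $x_v\in f^{-1}(b_v)$ and $x_R\in f^{-1}(b_R)$, and the double sum factors. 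Evaluating this conditional independence at the fixed value $X_{pa(v)}=a_{pa(v)}$ then yields $\PP(U_v=b_v\mid X_{pa(v)}=a_{pa(v)},\,U_R=b_R)=\PP(U_v=b_v\mid X_{pa(v)}=a_{pa(v)})$, which combined with the two identifications above is exactly \eqref{equ_007}.

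The only points requiring care, rather than a genuine obstacle, are bookkeeping ones: I must check that every conditional probability appearing is well-defined, which follows from full support together with the hypothesis $\PP(U_{nd(v)}=b_{nd(v)})>0$ (this guarantees that the denominator $\PP(U_R=b_R\mid X_{pa(v)}=a_{pa(v)})$ used to pass from the joint to the conditional is strictly positive). The pushforward of conditional independence through deterministic maps is standard, so I expect no serious difficulty; the crux is purely the observation that a singleton fibre over the parents collapses the distinction between the reduced and unreduced conditioning events.
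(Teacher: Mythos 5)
Your proof is correct and rests on the same two pillars as the paper's: the singleton fibre over $pa(v)$ identifies the events $\{U_{pa(v)}=b_{pa(v)}\}$ and $\{X_{pa(v)}=a_{pa(v)}\}$, and the sum over fibres of the factorised law then splits into the $v$-factor times the rest. The only difference is packaging: you route the fibre-summation through the local Markov property of Theorem \ref{equivalence_local_MP} and the standard closure of conditional independence under deterministic maps, whereas the paper expands $\PP(U_{nd^*(v)}=b_{nd^*(v)})$ directly via the factorisation \eqref{factorisation_BN} over the (ancestral) set $nd^*(v)$ and regroups the product -- the computational content is the same.
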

\begin{proof}
Assume that $b_{pa(v)}\in \mB^{pa(v)}$ is such that $f^{-1}(b_{pa(v)})=a_{pa(v)}$ with $a_{pa(v)}\in \mA^{pa(v)}$. Let $b_{nd^*(v)}\in \mB^{nd^*(v)}$ with $b_{nd^*(v)}|_{pa(v)}=b_{pa(v)}$ and $\PP(U_{nd(v)}=b_{nd(v)})>0$.
By definition it is enough to show that \eqref{equ_007} holds. Hence, the following equality will be enough
$$
\PP(U_{nd^*(v)}=b_{nd^*(v)})=\PP(U_v=b_v|U_{pa(v)}=b_{pa(v)})\PP(U_{nd(v)}=b_{nd(v)}).
$$
By the definition of the factorisation \eqref{factorisation_BN}, we rewrite
$
\PP(U_{nd^*(v)}=b_{nd^*(v)})$ as 

$$\PP(U_{nd^*(v)}=b_{nd^*(v)})=\sum_{\tilde{a}_{nd^*(v)}\in f^{-1}(b_{nd^*(v)})}\PP(X_{nd^*(v)}=\tilde{a}_{nd^*(v)})$$
$$=\sum_{\tilde{a}_{nd^*(v)}\in f^{-1}(b_{nd^*(v)})}\prod_{\tilde{v}\in nd^*(v)}\PP(X_{\tilde{v}}=\tilde{a}_{\tilde{v}}|X_{pa(\tilde{v})}=\tilde{a}_{pa(\tilde{v})}).$$
Now we use $f^{-1}(b_{pa(v)})=a_{pa(v)}$, and we rewrite the above as 
$$=\sum_{\tilde{a}_{v}\in f^{-1}(b_{v})}\sum_{\tilde{a}_{nd(v)}\in f^{-1}(b_{nd(v)})}\PP(X_v=\tilde{a}_{v}|X_{pa(v)}=a_{pa(v)})\prod_{\tilde{v}\in nd(v)}\PP(X_{\tilde{v}}=\tilde{a}_{\tilde{v}}|X_{pa(\tilde{v})}=\tilde{a}_{pa(\tilde{v})})$$
$$=\sum_{\tilde{a}_{v}\in f^{-1}(b_{v})}\PP(X_v=\tilde{a}_{v}|X_{pa(v)}=a_{pa(v)})\sum_{\tilde{a}_{nd(v)}\in f^{-1}(b_{nd(v)})}\prod_{\tilde{v}\in nd(v)}\PP(X_{\tilde{v}}=\tilde{a}_{\tilde{v}}|X_{pa(\tilde{v})}=\tilde{a}_{pa(\tilde{v})}).$$ We can again use the factorisation \eqref{factorisation_BN} together with the equality of events between $\{X_{pa(v)}=a_{pa(v)}\}$ and $\{U_{pa(v)}=b_{pa(v)}\}$ to rewrite the above as
$$
=\PP(U_v=b_v|U_{pa(v)}=b_{pa(v)})\PP(U_{nd(v)}=b_{nd(v)})
$$
\end{proof}

\begin{lemma}\label{prepare_suff_thm_D2}
\color{black}
Consider a BN $\mB\mN=(\mG,\PP)$ with random vector $(X_v)_{v\in V}$, $f:\mA\to \mB$, and $(U_v)_{v\in V}:=(f(X_v))_{v\in V}$. Assume the following holds for a $v\in V_p$ with $depth(v)\geq1$ for$v$ fix.

For all 
$b_{pa(v)}\in B^{pa(v)}$, and all $a_{pa(\tilde{v})}\in A^{pa(\tilde{v})}$ for $\tilde{v}\in pa(v)$, the following holds.

\begin{equation}\label{eq:nec_cond}
\begin{split}
 \PP(U_{pa(v)}=b_{pa(v)}|X_{pa^2(v)}=a_{pa^2(v)})\PP(U_v=b_v|U_{pa(v)}=b_{pa(v)})=\\
\sum_{a_{pa(v)}\in f^{-1}(b_{pa(v)})} \PP(X_{pa(v)}=a_{pa(v)}|X_{pa^2(v)}=a_{pa^2(v)})\PP(U_v=b_v|X_{pa(v)}=a_{pa(v)}).
\end{split}
 \end{equation}
Then, 
\begin{enumerate}
\item For any $b_{pr(v)}\in B^{pr(v)}$ with $\PP(U_{pr(v)}=b_{pr(v)})>0$ the following holds:
\begin{equation}\label{eq:nec_cond_cons}
\PP(U_{v}=b_{v}|U_{pr(v)}=b_{pr(v)})=\PP(U_v=b_v|U_{pa(v)}=b_{pa(v)}).
\end{equation}
\item 
If the direct descendants of $pa(v)$ are $v$, i.e. $dde(pa(v))=v$, and either $depth(v)=1$ or $depth(v)\geq2$ and the direct descendants of $pa^2(v)$ are a subset of $pa(v)$, i.e. $dde(pa^2(v))\subseteq pa(v)$, then for any $b_{nd(v)}\in B^{nd(v)}$ such that $\PP(U_{nd(v)}=b_{pr(v)})>0$ the following holds:
\begin{equation}\label{eq:sec_cond_cons}
\PP(U_{v}=b_{v}|U_{nd(v)}=b_{nd(v)})=\PP(U_v=b_v|U_{pa(v)}=b_{pa(v)}).
\end{equation}
\end{enumerate}
\end{lemma}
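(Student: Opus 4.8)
The plan is to read the hypothesis \eqref{eq:nec_cond} as a single conditional independence statement about $v$, and then to propagate it outward through the ancestral structure (for part~(1)) and the full non-descendant structure (for part~(2)) of $v$, using the \emph{original} local and global Markov properties of $\PP$ at every step.

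First I would rewrite \eqref{eq:nec_cond}. Since $pa^2(v)\subseteq nd(v)$ and $U_v=f(X_v)$, the local Markov property of $\PP$ gives $\PP(U_v=b_v\mid X_{pa(v)}=a_{pa(v)},X_{pa^2(v)}=a_{pa^2(v)})=\PP(U_v=b_v\mid X_{pa(v)}=a_{pa(v)})$, so the sum on the right of \eqref{eq:nec_cond} is exactly $\PP(U_v=b_v,U_{pa(v)}=b_{pa(v)}\mid X_{pa^2(v)}=a_{pa^2(v)})$. Dividing the identity by $\PP(U_{pa(v)}=b_{pa(v)}\mid X_{pa^2(v)}=a_{pa^2(v)})$, which is positive by full support, shows that \eqref{eq:nec_cond} is equivalent to
\[
\PP(U_v=b_v\mid U_{pa(v)}=b_{pa(v)},X_{pa^2(v)}=a_{pa^2(v)})=\PP(U_v=b_v\mid U_{pa(v)}=b_{pa(v)}),
\]
that is, to the conditional independence $U_v\perp\!\!\!\perp X_{pa^2(v)}\mid U_{pa(v)}$. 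This reformulation is the hinge of both parts. (Note that for $depth(v)=1$ one has $pa(v)\subseteq V_s$, so $pa^2(v)=\emptyset$ and the hypothesis holds automatically.)

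For part~(1) I would establish the stronger statement $U_v\perp\!\!\!\perp X_{pr(v)\setminus pa(v)}\mid U_{pa(v)}$ and then average out. Conditioning on the original ancestors and invoking the local Markov property $U_v\perp\!\!\!\perp X_{nd(v)\setminus pa(v)}\mid X_{pa(v)}$ of $\PP$, the conditional $\PP(U_v=b_v\mid U_{pa(v)}=b_{pa(v)},X_{pr(v)\setminus pa(v)}=a_{pr(v)\setminus pa(v)})$ is a weighted average of the original CPD values $\PP(U_v=b_v\mid X_{pa(v)}=a_{pa(v)})$, $a_{pa(v)}\in f^{-1}(b_{pa(v)})$, with posterior weights $\PP(X_{pa(v)}=a_{pa(v)}\mid U_{pa(v)}=b_{pa(v)},X_{pr(v)\setminus pa(v)}=a_{pr(v)\setminus pa(v)})$. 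The crucial observation is that these weights depend on the conditioning ancestors only through their $pa^2(v)$-coordinate: by the global Markov property, $pa^2(v)$ d-separates $pa(v)$ from the strictly older ancestors, so $\PP(X_{pa(v)}\mid X_{pr(v)\setminus pa(v)})=\PP(X_{pa(v)}\mid X_{pa^2(v)})$, and restricting to $f^{-1}(b_{pa(v)})$ and renormalising preserves this. Hence the conditional equals $\PP(U_v=b_v\mid U_{pa(v)}=b_{pa(v)},X_{pa^2(v)}=a_{pa^2(v)})$, which the reformulated hypothesis collapses to $\PP(U_v=b_v\mid U_{pa(v)}=b_{pa(v)})$. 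Averaging over the $X$-configurations compatible with $U_{pr(v)}=b_{pr(v)}$, legitimate because $U$ is a function of $X$, then yields \eqref{eq:nec_cond_cons}.

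For part~(2) I would run the identical mixture argument with $nd(v)$ in place of $pr(v)$; what must be upgraded is the d-separation step, namely $X_{pa(v)}\perp\!\!\!\perp X_{nd(v)\setminus pa(v)}\mid X_{pa^2(v)}$, since $nd(v)$ now contains non-descendants of $v$ that are not ancestors of $pa(v)$. This is exactly where the structural hypotheses enter: $dde(pa(v))=v$ forces every edge leaving $pa(v)$ to point into $v$, so any downward path out of $pa(v)$ must pass through $v$ and can reach a non-descendant of $v$ only through a collider that is itself a descendant of $v$, hence inactive given the purely ancestral set $pa^2(v)$; every upward path leaving $pa(v)$ enters $pa^2(v)$ as a non-collider and is blocked; and for $depth(v)\ge 2$ the condition $dde(pa^2(v))\subseteq pa(v)$ keeps the grandparent layer a clean funnel into $pa(v)$, ruling out a downward-and-around path opened through a grandparent, while for $depth(v)=1$ the upward case is vacuous as $pa^2(v)=\emptyset$. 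Once this d-separation holds, the weights $\PP(X_{pa(v)}=a_{pa(v)}\mid U_{nd(v)}=b_{nd(v)})$ again form a convex mixture over $a_{pa^2(v)}$ of the conditionals $\PP(X_{pa(v)}\mid U_{pa(v)}=b_{pa(v)},X_{pa^2(v)}=a_{pa^2(v)})$, each of which reproduces $\PP(U_v=b_v\mid U_{pa(v)}=b_{pa(v)})$ through the hypothesis, giving \eqref{eq:sec_cond_cons}. I expect the main obstacle to be precisely the verification of this d-separation from the two degree conditions, together with the clean separation of the degenerate $depth(v)=1$ case; the algebraic mixing step is then routine and identical to part~(1).
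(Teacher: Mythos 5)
Your proposal follows essentially the same route as the paper's proof: both expand the conditional on the ancestors (resp.\ non-descendants) into a mixture of the original CPDs $\PP(U_v=b_v\mid X_{pa(v)}=a_{pa(v)})$ over $a_{pa(v)}\in f^{-1}(b_{pa(v)})$, observe via the factorisation and d-separation that the mixing weights depend on the conditioning set only through the $pa^2(v)$-coordinates, and then invoke \eqref{eq:nec_cond} to collapse the inner sum to $\PP(U_v=b_v\mid U_{pa(v)}=b_{pa(v)})$ before resumming. Your reformulation of \eqref{eq:nec_cond} as the conditional independence $U_v\perp\!\!\!\perp X_{pa^2(v)}\mid U_{pa(v)}$ is a clean repackaging of the same identity, and the d-separation facts you rely on (including the one you flag as the main obstacle in part (2)) are exactly those the paper invokes.
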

\begin{proof}
\begin{enumerate}
\item If $depth(v)=1$, then $pr(v)=pa(v)$, hence \eqref{eq:nec_cond_cons} follows by definition. Therefore, assume $v$ has $depth(v)\geq 2$, and is such that for all 
$b_{pa(v)}\in B^{pa(v)}$, and all $a_{pa(\tilde{v})}\in A^{pa(\tilde{v})}$ for $\tilde{v}\in pa(v)$, \eqref{eq:nec_cond} holds.

Consider $b_{pr(v)}\in B^{pr(v)}$ such that $\PP(U_{pr(v)}=b_{pr(v)})>0$. Then, 
$$
\PP(U_{pr^*(v)}=b_{pr^*(v)})=\PP(U_v=b_v,U_{pa(v)}=b_{pa(v)},U_{pa^2(v)}=b_{pa^2(v)},U_{W}=b_{W}),
$$
with $W=pr^*(v)\setminus (pa^*(v)\cup pa^2(v))$. Using the law of total probability and the factorisation property of the BN we can write it as
\begin{equation*}
\begin{multlined}
\sum_{a_{pa^2(v)}\in f^{-1}(b_{pa^2(v)})}\sum_{a_{pa(v)}\in f^{-1}(b_{pa(v)})}\PP(U_v=b_v|X_{pa(v)}=a_{pa(v)})\\
\cdot \PP(X_{pa(v)}=a_{pa(v)}|X_{pa^2(v)}=a_{pa^2(v)})\PP(X_{pa^2(v)}=a_{pa^2(v)},U_W=b_W),
\end{multlined}
 \end{equation*}
which using \eqref{eq:nec_cond} equals
\begin{equation*}
\begin{multlined}
\sum_{a_{pa^2(v)}\in f^{-1}(b_{pa^2(v)})}\PP(U_v=b_v|U_{pa(v)}=b_{pa(v)})\PP(U_{pa(v)}=b_{pa(v)}|X_{pa^2(v)}=a_{pa^2(v)})\\
\cdot \PP(X_{pa^2(v)}=a_{pa^2(v)},U_W=b_W).
\end{multlined}
 \end{equation*}
After factoring out $\PP(U_v=b_v|U_{pa(v)}=b_{pa(v)})$, we get
$$
\PP(U_v=b_v|U_{pa(v)}=b_{pa(v)})\cdot \sum_{a_{pa^2(v)}\in f^{-1}(b_{pa^2(v)})}\PP(U_{pa(v)}=b_{pa(v)}|X_{pa^2(v)}=a_{pa^2(v)})\PP(X_{pa^2(v)}=a_{pa^2(v)},U_W=b_W),
$$
which using that $(X_v)_{v\in V}$ is a BN equals
$$
\PP(U_v=b_v|U_{pa(v)}=b_{pa(v)})\cdot\PP(U_{pa(v)}=b_{pa(v)},U_{pa^2(v)}=b_{pa^2(v)},U_W=b_W).
$$
Hence \eqref{eq:nec_cond_cons} holds, which is what we wanted to show. 
\item Assume first that $v$ has $depth(v)\geq 2$, satisfies $dde(pa(v))=v$ and $dde(pa^2(v))\subseteq pa(v)$, and is such that for all 
$b_{pa(v)}\in B^{pa(v)}$, and all $a_{pa(\tilde{v})}\in A^{pa(\tilde{v})}$ for $\tilde{v}\in pa(v)$, \eqref{eq:nec_cond} holds.

While the proof is essentially the same as for (1), we provide it for completeness. Consider $b_{nd(v)}\in B^{nd(v)}$ such that $\PP(U_{nd(v)}=b_{nd(v)})>0$. Then, 
$$
\PP(U_{nd^*(v)}=b_{nd^*(v)})=\PP(U_v=b_v,U_{pa(v)}=b_{pa(v)},U_{pa^2(v)}=b_{pa^2(v)},U_{W}=b_{W})
$$
with $W=nd(v)\setminus (pa(v)\cup pa^2(v))$. Using the law of total probability and $dde(pa(v))=v$ and $dde(pa^2(v))\subseteq pa(v)$, which implies  that $pa(v)$ d-separates $v,pa^2(v)$ as well as that $pa^2(v)$ d-separates $pa(v),W$ leads to the following CPDs according to Theorem \ref{equivalence_local_MP}
\begin{equation*}
\begin{multlined}
\sum_{a_{pa^2(v)}\in f^{-1}(b_{pa^2(v)})}\sum_{a_{pa(v)}\in f^{-1}(b_{pa(v)})}\PP(U_v=b_v|X_{pa(v)}=a_{pa(v)})\\
\cdot \PP(X_{pa(v)}=a_{pa(v)}|X_{pa^2(v)}=a_{pa^2(v)})\PP(X_{pa^2(v)}=a_{pa^2(v)},U_W=b_W),
\end{multlined}
 \end{equation*}
which by using \eqref{eq:nec_cond} equals
\begin{equation*}
\begin{multlined}
\sum_{a_{pa^2(v)}\in f^{-1}(b_{pa^2(v)})}\PP(U_v=b_v|U_{pa(v)}=b_{pa(v)})\PP(U_{pa(v)}=b_{pa(v)}|X_{pa^2(v)}=a_{pa^2(v)})\\
\cdot \PP(X_{pa^2(v)}=a_{pa^2(v)},U_W=b_W).
\end{multlined}
 \end{equation*}
The rest of the proof is as in (1).\\
The case of $depth(v)=1$ is similar as \begin{itemize}
\item$v$ d-separates $pa(v)$ and $nd(v)\setminus pa(v)$ and
\item $X_{pa(v)}\perp\!\!\!\perp X_{nd(v)\setminus pa(v)}$ by the local Markov property of Theorem \ref{equivalence_local_MP} as $pa(v)$ are root nodes
\end{itemize}
 if the second set is nonempty. If $nd(v)\setminus pa(v)$ is empty, then $pa(v)=nd(v)$, i.e., \eqref{eq:sec_cond_cons} holds by definition.
 \end{enumerate}
\end{proof}
\color{black}

\subsection{Results for \ref{D1}}\label{res_D1}
In Theorem \ref{nec_D1} we give a necessary condition on the CPDs to factorise, while in Theorem \ref{char_D1} we characterise \ref{D1} via a condition on fractions of CPDs.

\begin{theorem}\label{nec_D1}
Consider a BN $\mB\mN=(\mG,\PP)$ with random vector $(X_v)_{v\in V}$, a state projection $f:\mA\to \mB$ and $(U_v)_{v\in V}:=(f(X_v))_{v\in V}$, and assume $(\mB\mN,f)$ satisfies \ref{D1}. Then, for all $v\in V$ of depth bigger than one, all 
$b_{pa(v)}\in B^{pa(v)},b_{pa^2(v)}\in B^{pa^2(v)}$ with $\PP(U_{pa(v)}=b_{pa(v)})\PP(U_{pa^2(v)}=b_{pa^2(v)})>0$ the following holds.

\begin{equation}\label{eq:nec_cond_}
\begin{split}
\PP(U_v=b_v|U_{pa(v)}=b_{pa(v)})\cdot \prod_{\tilde{v}\in pa(v)} \PP(U_{\tilde{v}}=b_{\tilde{v}}|U_{pa(\tilde{v})}=b_{pa(\tilde{v})})=\\ 
\sum_{a_{pa(v)}\in f^{-1}(b_{pa(v)})}\PP(U_v= b_v|X_{pa(v)}=a_{pa(v)})\cdot \PP(X_{pa(\tilde{v})}=a_{pa(v)}|U_{pa^2(v)}=b_{pa^2(v)}).
\end{split}
 \end{equation}
\end{theorem}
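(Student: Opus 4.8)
The plan is to show that both sides of \eqref{eq:nec_cond_} coincide with the single conditional probability
\[
Q:=\PP(U_v=b_v,U_{pa(v)}=b_{pa(v)}\mid U_{pa^2(v)}=b_{pa^2(v)}),
\]
obtained by expanding $Q$ through the original network to recover the right-hand side, and through the reduced factorisation guaranteed by \ref{D1} to recover the left-hand side. We may assume $b_{pa(v)}$ and $b_{pa^2(v)}$ agree on $pa(v)\cap pa^2(v)$, since otherwise $Q$ and both sides of \eqref{eq:nec_cond_} vanish.

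First I would establish that the right-hand side equals $Q$; this is the more routine direction. Expanding $Q$ by the law of total probability over the fibre $f^{-1}(b_{pa(v)})$ produces a sum of terms $\PP(U_v=b_v\mid X_{pa(v)}=a_{pa(v)},U_{pa^2(v)}=b_{pa^2(v)})\,\PP(X_{pa(v)}=a_{pa(v)}\mid U_{pa^2(v)}=b_{pa^2(v)})$. The key point is that $U_v=f(X_v)$ is a function of $X_v$ alone and that $pa^2(v)\setminus pa(v)\subseteq nd(v)$, so the local Markov property of the original BN (Theorem \ref{equivalence_local_MP}) gives $X_v\perp\!\!\!\perp X_{nd(v)\setminus pa(v)}\mid X_{pa(v)}$ and hence collapses the first factor to $\PP(U_v=b_v\mid X_{pa(v)}=a_{pa(v)})$; the coordinates of $pa^2(v)$ already fixed by $X_{pa(v)}=a_{pa(v)}$ contribute nothing. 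This reproduces the right-hand side of \eqref{eq:nec_cond_} verbatim.

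Next I would show the left-hand side equals $Q$ using \ref{D1}. Since $(U_v)_{v\in V}$ factorises over $\mG$, it obeys the local Markov property, so $U_v\perp\!\!\!\perp U_{pa^2(v)\setminus pa(v)}\mid U_{pa(v)}$ and I may split $Q=\PP(U_v=b_v\mid U_{pa(v)}=b_{pa(v)})\cdot\PP(U_{pa(v)}=b_{pa(v)}\mid U_{pa^2(v)}=b_{pa^2(v)})$. The first factor is already the corresponding factor on the left of \eqref{eq:nec_cond_}, so it remains to identify $\PP(U_{pa(v)}=b_{pa(v)}\mid U_{pa^2(v)}=b_{pa^2(v)})$ with $\prod_{\tilde v\in pa(v)}\PP(U_{\tilde v}=b_{\tilde v}\mid U_{pa(\tilde v)}=b_{pa(\tilde v)})$. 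For this I would restrict the $U$-factorisation to the ancestral set $\bigcup_{\tilde v\in pa(v)}pr^*(\tilde v)$, on which $(U_w)$ again factorises, and marginalise down to $pa(v)\cup pa^2(v)$; conditioning on $U_{pa^2(v)}$ should then make the parents factor, each $\tilde v\in pa(v)$ contributing exactly its own CPD $\PP(U_{\tilde v}\mid U_{pa(\tilde v)})$.

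The main obstacle is precisely this last factorisation. When the parents of $v$ are pairwise non-adjacent it is clean: any two parents are d-separated given $pa^2(v)$, since their only connecting routes pass either through a common ancestor inside $pa^2(v)$ (a blocked fork) or through the collider $v$, which is not conditioned, so the global Markov property for $(U_v)_{v\in V}$ (Theorem \ref{equivalence_local_MP}) delivers the product. The delicate case is when $pa(v)$ and $pa^2(v)$ overlap, i.e.\ when one parent is itself a parent of another; then a coordinate of $pa(v)$ is already pinned by the conditioning on $U_{pa^2(v)}$, and one must track carefully whether the corresponding CPD factor on the left of \eqref{eq:nec_cond_} is absorbed or double counted. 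I expect this overlap bookkeeping---reconciling the pinned parent coordinates with the factors they contribute---to be the crux of the argument, and the point where any additional hypothesis on the adjacency of the parents of $v$ would enter. Combining the resolved factorisation with the identity of the second paragraph then yields \eqref{eq:nec_cond_}.
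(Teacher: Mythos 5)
Your proposal follows essentially the same route as the paper: both proofs show that the two sides of \eqref{eq:nec_cond_} equal $Q=\PP(U_v=b_v,U_{pa(v)}=b_{pa(v)}\mid U_{pa^2(v)}=b_{pa^2(v)})$, expanding over the fibre $f^{-1}(b_{pa(v)})$ and invoking the Markov property of $(X_v)_{v\in V}$ for the right-hand side, and the factorisation of $(U_v)_{v\in V}$ guaranteed by \ref{D1} for the left-hand side. The one step you flag as the crux --- identifying $\PP(U_{pa(v)}=b_{pa(v)}\mid U_{pa^2(v)}=b_{pa^2(v)})$ with $\prod_{\tilde v\in pa(v)}\PP(U_{\tilde v}=b_{\tilde v}\mid U_{pa(\tilde v)}=b_{pa(\tilde v)})$ --- is precisely the step the paper dispatches in one line ``by the factorisation of BNs \eqref{factorisation_BN}''; your d-separation argument for pairwise non-adjacent parents is a more careful version of that line, and your worry about the overlapping case (a parent of $v$ that is itself a parent of another parent, so that a coordinate of $pa(v)$ is pinned by the conditioning on $U_{pa^2(v)}$ while still contributing a full CPD factor to the product) identifies a genuine imprecision that the paper's proof does not address either. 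In short, the proposal is at least as complete as the published argument, and correctly isolates its weakest point.
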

\begin{proof}\
By \ref{D1}, $(U_v)_{v\in V}$ is a BN with DAG $\mG$. By assumption, both the left and the right-hand side of \eqref{eq:nec_cond_} are defined. We consider the expression $\PP(U_v=b_v,U_{pa(v)}=b_{pa(v)}|U_{pa^2(v)}=b_{pa^2(v)})$ and show that it is equal to both sides of \eqref{eq:nec_cond_} separately. 
\begin{itemize}

\item
First, we can write out the conditional probabilities such that
\begin{equation}\begin{split}
\PP(U_v=b_v,U_{pa(v)}=b_{pa(v)}|U_{pa^2(v)}=b_{pa^2(v)})=\\
\sum_{a_{pa(v)}\in f^{-1}(b_{pa(v)})}\PP(U_v=b_v,X_{pa(v)}=a_{pa(v)}|U_{pa^2(v)}=b_{pa^2(v)})\end{split} \end{equation}
where equality holds by the law of total probability. By using Lemma \ref{elementary_lem} we get
$$
=\sum_{a_{pa(v)}\in f^{-1}(b_{pa(v)})}\PP(X_v=f^{-1}(b_v)|X_{pa(v)}=a_{pa(v)},U_{pa^2(v)}=b_{pa^2(v)})\PP(X_{pa(v)}=a_{pa(v)}|U_{pa^2(v)}=b_{pa^2(v)}).
$$
Then, using that $(X_v)_{v\in V}$ is a BN we get the right-hand side of \eqref{eq:nec_cond_}.

\item
Going the other way, we have
\begin{equation}\begin{split}
\PP(U_v=b_v,U_{pa(v)}=b_{pa(v)}|U_{pa^2(v)}=b_{pa^2(v)})=\\
\PP(U_v=b_v|U_{pa(v)}=b_{pa(v)},U_{pa^2(v)}=b_{pa^2(v)})\cdot \PP(U_{pa(v)}=b_{pa(v)}|U_{pa^2(v)}=b_{pa^2(v)})=\\
\PP(U_v=b_v|U_{pa(v)}=b_{pa(v)})\cdot \PP(U_{pa(v)}=b_{pa(v)}|U_{pa^2(v)}=b_{pa^2(v)})
\end{split} \end{equation}
where we first use Lemma using Lemma \ref{elementary_lem}. Afterwards we use that $(U_v)_{v\in V}$ is a BN. Then, by the factorisation of BNs \eqref{factorisation_BN}, we have
\begin{equation*}\begin{split}
\PP(U_v=b_v|U_{pa(v)}=b_{pa(v)})\cdot \PP(U_{pa(v)}=b_{pa(v)}|U_{pa^2(v)}=b_{pa^2(v)})=\\
\prod_{\tilde{v}\in pa(v)}\PP(U_{\tilde{v}}=b_{\tilde{v}}|U_{pa(\tilde{v})}=b_{pa(\tilde{v})})\PP(U_v=b_v|U_{pa(v)}=b_{pa(v)}).\\
\end{split} \end{equation*}

\end{itemize}
\end{proof}

\begin{corollary}\label{suff_thm_D2}
Consider a BN $\mB\mN=(\mG,\PP)$ with random vector $(X_v)_{v\in V}$ where $\mG$ is such that for all $v\in V$:
\begin{itemize}
\item If $depth(v)\geq 1$, then $dde(pa(v))=v$.
\item If $depth(v)\geq 2$, then $dde(pa^2(v))\subseteq pa(v)$.
\end{itemize}Let $f:\mA\to \mB$, and $(U_v)_{v\in V}:=(f(X_v))_{v\in V}$. Assume the following holds for all $v\in V_p$.
For all 
$b_{pa(v)}\in B^{pa(v)}$, and all $a_{pa(\tilde{v})}\in A^{pa(\tilde{v})}$ for $\tilde{v}\in pa(v)$, the following holds.
\begin{equation}\label{suff_D1_equ}
\begin{split}
 \PP(U_{pa(v)}=b_{pa(v)}|X_{pa^2(v)}=a_{pa^2(v)})\PP(U_v=b_v|U_{pa(v)}=b_{pa(v)})=\\
\sum_{a_{pa(v)}\in f^{-1}(b_{pa(v)})} \PP(X_{pa(v)}=a_{pa(v)}|X_{pa^2(v)}=a_{pa^2(v)})\PP(U_v=b_v|X_{pa(v)}=a_{pa(v)})
\end{split}
 \end{equation}
 Then, $(\mB\mN,f)$ satisfies \ref{D1}.
\end{corollary}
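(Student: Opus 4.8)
The plan is to establish the local Markov property for $(U_v)_{v\in V}$ at \emph{every} vertex and then conclude factorisation via Theorem \ref{equivalence_local_MP}. Concretely, for each $v\in V$ I would show
$$U_v \perp\!\!\!\perp U_{nd(v)\setminus pa(v)}\,|\,U_{pa(v)},$$
which, since full support guarantees all relevant conditional probabilities are defined, is equivalent to the identity \eqref{eq:sec_cond_cons}, i.e.\ $\PP(U_v=b_v|U_{nd(v)}=b_{nd(v)})=\PP(U_v=b_v|U_{pa(v)}=b_{pa(v)})$ for all states. I would split the argument into the non-source vertices $v\in V_p$ and the source vertices $v\in V_s$.

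For $v\in V_p$ (equivalently $depth(v)\geq 1$), the idea is simply to invoke part (2) of Lemma \ref{prepare_suff_thm_D2}. First I note that the hypothesis \eqref{suff_D1_equ} of the corollary is verbatim the hypothesis \eqref{eq:nec_cond} of that lemma, and it is assumed for every $v\in V_p$. Next I would match the graph hypotheses: the lemma's case split asks either $depth(v)=1$ together with $dde(pa(v))=v$, or $depth(v)\geq 2$ together with $dde(pa(v))=v$ and $dde(pa^2(v))\subseteq pa(v)$. The two bullet assumptions on $\mG$ in the corollary supply exactly $dde(pa(v))=v$ whenever $depth(v)\geq 1$, and additionally $dde(pa^2(v))\subseteq pa(v)$ whenever $depth(v)\geq 2$, so for every $v\in V_p$ precisely one of the two cases of Lemma \ref{prepare_suff_thm_D2}(2) applies. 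Hence \eqref{eq:sec_cond_cons} holds at every such $v$, which is the local Markov property there.

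For a source vertex $v\in V_s$ one has $pa(v)=\emptyset$, and this case is not covered by the lemma, so it requires a short separate argument. Since $(X_v)_{v\in V}$ factorises, its local Markov property (Theorem \ref{equivalence_local_MP}) gives $X_v\perp\!\!\!\perp X_{nd(v)}$. As $U_v=f(X_v)$ depends only on $X_v$ while $U_{nd(v)}=(f(X_w))_{w\in nd(v)}$ depends only on $X_{nd(v)}$, these two measurable functions of independent blocks remain independent, i.e.\ $U_v\perp\!\!\!\perp U_{nd(v)}$; this is exactly the local Markov property at $v$ when $pa(v)=\emptyset$.

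Combining the two cases, the local Markov property holds for $(U_v)_{v\in V}$ at every $v\in V$, so by Theorem \ref{equivalence_local_MP} the reduced vector factorises over $\mG$, which is \ref{D1}. I expect the only real obstacle to be bookkeeping rather than substance: essentially all of the computational work is already carried out in Lemma \ref{prepare_suff_thm_D2}, so the genuinely new points are (i) checking that the corollary's two structural assumptions on $\mG$ translate, for each $v\in V_p$, into exactly the case hypotheses of that lemma, and (ii) recognising that the lemma leaves the source nodes untreated and that they are handled for free by pushing the independence of the original source variables through $f$.
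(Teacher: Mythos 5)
Your proposal is correct and follows essentially the same route as the paper: the paper likewise reduces to checking $\PP(U_{v}=b_{v}\mid U_{nd(v)}=b_{nd(v)})=\PP(U_v=b_v\mid U_{pa(v)}=b_{pa(v)})$ at every vertex, handles $depth(v)=0$ by pushing the independence $X_v \perp\!\!\!\perp X_{nd(v)}$ of the source variables through $f$, and handles $depth(v)\geq 1$ by invoking Lemma \ref{prepare_suff_thm_D2}(2) under exactly the two structural hypotheses on $\mG$ that you matched. Your write-up is in fact slightly more explicit than the paper's about why the corollary's graph assumptions align with the lemma's case split, but the substance is identical.
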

\begin{proof}
%
By assumption on the DAG and Corollary \ref{proof_princ}
it is enough to show the following for all $v\in V$ and all $b_{nd^*(v)}\in B^{nd^*(v)}$ with
 $\PP(U_{nd(v)}=b_{nd(v)})>0$:
$$
\PP(U_{v}=b_{v}|U_{nd(v)}=b_{nd(v)})=\PP(U_v=b_v|U_{pa(v)}=b_{pa(v)}),
$$
which is equivalent to
$$
\frac{\PP(U_{nd^*(v)}=b_{nd^*(v)})}{\PP(U_{nd(v)}=b_{nd(v)})}=\frac{\PP(U_{pa^*(v)}=b_{pa^*(v)})}{\PP(U_{pa(v)}=b_{pa(v)})}.
$$
Next we go through a case-by-case analysis for the depth of $v\in V$.
\begin{itemize}
\item If $depth(v)=0$, $pa(v)=\empty$ and by the local Markov property of Theorem \ref{equivalence_local_MP} $X_{v}\perp\!\!\!\perp X_{nd(v)}$.
\item If $depth(v)\geq1$, it holds by Lemma \ref{prepare_suff_thm_D2}.
\end{itemize}

\end{proof}
Note that equation \eqref{suff_D1_equ} is not invariant under change of initial distribution.

The following essentially corresponds to a global check following Theorem \ref{equivalence_local_MP}. The proof is postponed to Appendix $\S$ \ref{proof_char_D1_1}

\begin{theorem}\label{char_D1}
Consider a BN $\mB\mN=(\mG,\PP)$ with random vector $(X_v)_{v\in V}$ and state projection $f:\mA\to \mB$, and let $(U_v)_{v\in V}:=(f(X_v))_{v\in V}$. Then
$(\mB\mN,f)$ satisfies \ref{D1} if and only if for all $v\in V$ and all $w_1,w_2\in\mB^{nd^*(v)}$ with $w_1|_{pa^*(v)}=w_2|_{pa^*(v)}$, for $u_1=w_1|_{nd(v)},u_2=w_2|_{nd(v)}$ 
we have
\begin{equation}
\label{fund_equ}
\PP(U_{nd^*(v)}=w_1)\cdot \PP(U_{nd(v)}=u_2)=
\PP(U_{nd^*(v)}=w_2)\cdot \PP(U_{nd(v)}=u_1)
\end{equation}

\end{theorem}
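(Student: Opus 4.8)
The plan is to apply Theorem~\ref{equivalence_local_MP} to the reduced vector $(U_v)_{v\in V}$, thereby reducing \ref{D1} to a local Markov statement, and then to recognise \eqref{fund_equ} as the division-free form of that statement. By Theorem~\ref{equivalence_local_MP}, $(\mB\mN,f)$ satisfies \ref{D1} exactly when $(U_v)_{v\in V}$ obeys the local Markov property with respect to $\mG$, that is, $U_v \perp\!\!\!\perp U_{nd(v)\setminus pa(v)}\mid U_{pa(v)}$ for every $v\in V$. Since all variables are discrete I would express each such conditional independence in its product form, which has the advantage of being meaningful without any positivity hypothesis on the conditioning events. Writing $R:=nd(v)\setminus pa(v)$ and using the disjoint decomposition $nd^*(v)=\{v\}\cup pa(v)\cup R$, the local Markov property at $v$ is, by the very definition of conditional independence, equivalent to
\begin{equation}\label{eq:planprod}
\PP(U_{nd^*(v)}=w)\cdot \PP(U_{pa(v)}=w|_{pa(v)})=\PP(U_{pa^*(v)}=w|_{pa^*(v)})\cdot \PP(U_{nd(v)}=w|_{nd(v)})
\end{equation}
holding for every $w\in\mB^{nd^*(v)}$. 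It therefore suffices to prove that \eqref{eq:planprod} (for all $v$ and all $w$) is equivalent to \eqref{fund_equ} (for all $v$ and all $w_1,w_2$ agreeing on $pa^*(v)$).

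For the implication \eqref{eq:planprod}$\Rightarrow$\eqref{fund_equ} I would fix $v$ and two assignments $w_1,w_2\in\mB^{nd^*(v)}$ with common restriction $c:=w_1|_{pa^*(v)}=w_2|_{pa^*(v)}$, set $u_i=w_i|_{nd(v)}$, and write \eqref{eq:planprod} for $w_1$ and for $w_2$. Multiplying the $w_1$-identity by $\PP(U_{nd(v)}=u_2)$ and the $w_2$-identity by $\PP(U_{nd(v)}=u_1)$ makes their right-hand sides coincide (both equal $\PP(U_{pa^*(v)}=c)\PP(U_{nd(v)}=u_1)\PP(U_{nd(v)}=u_2)$), so the left-hand sides agree; cancelling the common factor $\PP(U_{pa(v)}=c|_{pa(v)})$ yields \eqref{fund_equ}. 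If instead $\PP(U_{pa(v)}=c|_{pa(v)})=0$, then every event occurring in \eqref{fund_equ} entails $U_{pa(v)}=c|_{pa(v)}$ and hence has probability zero, so \eqref{fund_equ} reads $0=0$.

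For the converse \eqref{fund_equ}$\Rightarrow$\eqref{eq:planprod} I would fix $v$ and $w_1\in\mB^{nd^*(v)}$, put $c:=w_1|_{pa^*(v)}$ and $u_2 = w_2|_{nd(v)}$, and sum the instances of \eqref{fund_equ} over all $w_2\in\mB^{nd^*(v)}$ with $w_2|_{pa^*(v)}=c$. Fixing the $pa^*(v)$-coordinates of $w_2$ pins down its $v$- and $pa(v)$-coordinates while leaving precisely the coordinates in $R$ free, so marginalising these out gives $\sum_{w_2}\PP(U_{nd^*(v)}=w_2)=\PP(U_{pa^*(v)}=c)$ and $\sum_{w_2}\PP(U_{nd(v)}=u_2)=\PP(U_{pa(v)}=c|_{pa(v)})$; substituting these two identities into the summed relation produces exactly \eqref{eq:planprod} for $w=w_1$. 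Ranging over all $v$ and all $w_1$ then recovers the local Markov property for $(U_v)_{v\in V}$, hence \ref{D1}.

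I expect the only genuinely delicate point to be the treatment of zero-probability events. The characterisation is most transparent in the conditional form $\PP(U_v=b_v\mid U_{nd(v)}=u)=\PP(U_v=b_v\mid U_{pa(v)}=b_{pa(v)})$, but \eqref{fund_equ} is written without division precisely so that it stays valid on the boundary of the support; routing both implications through the product form \eqref{eq:planprod} is what keeps the argument free of case distinctions, apart from the single degenerate case noted above. The remaining work is purely the marginalisation bookkeeping, which is immediate once one observes that agreement on $pa^*(v)$ fixes the $v$- and $pa(v)$-coordinates while leaving the $R$-coordinates unconstrained.
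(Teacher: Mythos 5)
Your proposal is correct and follows essentially the same route as the paper: both directions reduce \ref{D1} to the local Markov property at each vertex and then exploit that summing over the free coordinates in $nd(v)\setminus pa(v)$, with $pa^*(v)$ pinned down, marginalises $\PP(U_{nd^*(v)}=\cdot)$ and $\PP(U_{nd(v)}=\cdot)$ to $\PP(U_{pa^*(v)}=\cdot)$ and $\PP(U_{pa(v)}=\cdot)$. The only (cosmetic, and arguably cleaner) difference is that you phrase everything through the division-free product form of conditional independence, whereas the paper restricts to positive-probability events via Lemma \ref{proof_princ_lemma} and manipulates the resulting fractions.
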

\color{black}
\subsection{Results for \ref{D2}}\label{res_D2}
We give a sufficient condition in Theorem \ref{thm_suff_D2_2} based on a nonzero pattern of the CPDs. Theorem \ref{suff_thm_D2} gives a sufficient condition for \ref{D2}, showing that if some equality holds over sums of CPDs at each node, then \ref{D2} is true. 

\begin{theorem}\label{thm_suff_D2_2}
\color{black}
Consider BN $\mB\mN=(\mG,\PP)$ with random vector $(X_v)_{v\in V}$ and full support, where vertices of $\mG$ have in- and out-degree at most one and a state projection $f:\mA\to \mB$, with $(U_v)_{v\in V}:=(f(X_v))_{v\in V}$.
Assume $b_1,\cdots b_r\in B$ are such that $|f^{-1}(b_i)|=1$, while for $b_{r+1},\cdots b_j\in B$ we assume that $|f^{-1}(b_i)|>1$. Let $B_1:=\{b_1,\cdots b_r\}$ and $B_2:=\{b_{r+1},\cdots b_j\}$ such that $B=B_1\cup B_2$. 

Suppose the BN satisfies the following:\\
For all $v\in V_p$ and all 
$b_v\in B_2^v$ there is exactly one $\tilde{b}\in B_2$ such that for all $\tilde{a}_{pa(v)}\in A^{pa(v)}\setminus (f^{-1}(\tilde{b}))^{pa(v)}$ we have the 
following
\begin{equation}\label{eq:zero}
\PP(X_v\in f^{-1}(b_v)|X_{pa(v)}=\tilde{a}_{pa(v)})=0.
 \end{equation}
Then, $(\mB\mN,f)$ satisfies \ref{D2}.
\end{theorem}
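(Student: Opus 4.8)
The plan is to verify the local Markov property for the reduced vector and to invoke Theorem \ref{equivalence_local_MP}. First I would record the structural consequence of the degree hypothesis: a DAG in which every vertex has in- and out-degree at most one is a disjoint union of directed paths, and (since there are no edges across components) the joint distribution factors as a product over the components, so distinct components are independent. Hence for $v\in V_p$ the parent set $pa(v)=\{u\}$ is a single vertex, and the non-descendants split as $nd(v)\setminus\{u\}=R_{\mathrm{path}}\sqcup R_{\mathrm{other}}$, where $R_{\mathrm{path}}$ are the ancestors of $u$ on the same path and $R_{\mathrm{other}}$ are the vertices of the other components. Because the hypothesis \eqref{eq:zero} is a statement about the CPDs and does not involve the prior, it is invariant under changing the initial distribution; I would therefore fix an arbitrary $\tilde\alpha$ and show that $(U_v[\tilde\alpha])_{v\in V}$ satisfies the local Markov property, which by Theorem \ref{equivalence_local_MP} yields the factorisation and hence \ref{D2}.

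Concretely, for each $v\in V_p$ with parent $u$ and each admissible $b_v,b_u,b_R$ (with $R:=nd(v)\setminus\{u\}$ and the conditioning event of positive probability), I would show $\PP(U_v=b_v\mid U_u=b_u,U_R=b_R)=\PP(U_v=b_v\mid U_u=b_u)$. Conditioning on the underlying state $X_u$ and using that $(X_v)_{v\in V}$ is a BN, so $X_v\perp\!\!\!\perp X_R\mid X_u$ and thus $\PP(U_v=b_v\mid X_u=a_u,U_R=b_R)=\PP(U_v=b_v\mid X_u=a_u)$, the identity reduces to showing that the conditional law of $X_u$ on the block $f^{-1}(b_u)$ is unaffected by the past, i.e. $\PP(X_u=a_u\mid U_u=b_u,U_R=b_R)=\PP(X_u=a_u\mid U_u=b_u)$ for every $a_u\in f^{-1}(b_u)$.

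I would then split on the block $b_u$. If $b_u\in B_1$ then $f^{-1}(b_u)$ is a singleton, $X_u$ is determined by $U_u=b_u$, and the claim is immediate; this is exactly the situation of Lemma \ref{lemma_preimage_singleton}, which I would cite. If $b_u\in B_2$, this is the main step. Here I would first discard $R_{\mathrm{other}}$ using independence of the components, reducing the conditioning to $U_{R_{\mathrm{path}}}$. Then I would run the key forcing argument: by \eqref{eq:zero} there is a unique block $\tilde b\in B_2$ from which the merged block $f^{-1}(b_u)$ can be entered, so $\{U_u=b_u\}$ forces $U_{pa(u)}=\tilde b$; since $\tilde b\in B_2$ is again merged, the same applies at $pa(u)$, and iterating back along the path pins down $U_{R_{\mathrm{path}}}$ to a single deterministic tuple. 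Consequently the event $\{U_u=b_u,U_{R_{\mathrm{path}}}=b_{R_{\mathrm{path}}}\}$ is either empty or equal to $\{U_u=b_u\}$, so conditioning on $U_{R_{\mathrm{path}}}$ is vacuous and the desired equality follows.

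The main obstacle is precisely the merged case $b_u\in B_2$. The tempting route through the per-state sufficient condition of Corollary \ref{suff_thm_D2} does not work: its defining identity \eqref{suff_D1_equ} conditions on the exact state $X_{pa^2(v)}=a_{pa^2(v)}$ rather than on a block, and it can genuinely fail here, since the within-block transition probabilities need not be constant across the block and the weighted averages produced by different parent states then disagree. The point of the argument above is that one must condition only at the coarse level of blocks: although the conditional law of $X_u$ inside a merged block does depend on the initial distribution and on the finer history, the $U$-values of all ancestors are deterministic functions of $U_u=b_u$, so augmenting the conditioning by them changes nothing. Finally, since the uniqueness in \eqref{eq:zero} and the component independence are features of the CPDs alone, the argument holds verbatim for every initial distribution $\tilde\alpha$, giving \ref{D2}.
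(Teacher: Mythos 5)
Your proposal is correct and follows essentially the same route as the paper: reduce to a local check along each directed path, dispose of the singleton-block case via Lemma \ref{lemma_preimage_singleton}, and for a merged block use \eqref{eq:zero} to force the $U$-values of all path ancestors deterministically (the paper's map $\pi_v:B_2\to B_2$), so that the extra conditioning collapses; the observation that \eqref{eq:zero} depends only on the CPDs then gives the statement for every initial distribution. The only cosmetic difference is that you verify the full local Markov property and handle the other path components by independence explicitly, where the paper delegates this to Corollary \ref{proof_princ_cor} and conditions only on predecessors.
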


\begin{proof}
First we note that by assumption for all $v\in V$ in \eqref{eq:zero}  we can define a map that maps $b_v$ to $\tilde{b}$, which we denote by $\pi_v:B_2\to B_2$. Furthermore, by assumption on the DAG $\mG$ and Corollary \ref{proof_princ_cor}
it is enough to show the following for all $v\in V$ and all $b_{pr^*(v)}\in B^{pr^*(v)}$ with
 $\PP(U_{pr(v)}=b_{pr(v)})>0$:
$$
\PP(U_{v}=b_{v}|U_{pr(v)}=b_{pr(v)})=\PP(U_v=b_v|U_{pa(v)}=b_{pa(v)}).
$$
This is equivalent to
$$
\frac{\PP(U_{pr^*(v)}=b_{pr^*(v)})}{\PP(U_{pr(v)}=b_{pr(v)})}=\frac{\PP(U_{pa^*(v)}=b_{pa^*(v)})}{\PP(U_{pa(v)}=b_{pa(v)})}.
$$
We also note that by assumption $pa(v)$ is a singleton set or empty for all $v\in V$.

While we have to show the statement for $(U_v[\tilde\alpha])_{v\in V}$, we will start with no change in initial distribution and explain later why this proof generalises. We distinguish two cases:\\
1.) If $b_{pa(v)}\in B_1$, equality holds by Lemma \ref{lemma_preimage_singleton}.\\
2.) Assume $b_{pa(v)}\not \in B_1$, then if $\PP(U_{pr(v)}=b_{pr(v)})>0$, by assumption it is necessary that for any $\tilde{v}\in pr(v)$ and any $w\in pa(\tilde{v})$, $b_{pr(v)|w}=\pi_{\tilde{v}}(b_{pr(v)|\tilde{v}})$.

Let $V_1:=V_S\cap pr(v)$, $V_2:=pa(v)$, and $V_3:=pr(v)\setminus(V_1\cup V_2)$. Then, 
$\PP(U_{V_3}=b_{V_3},U_{V_2}=b_{V_2},U_{V_1}=b_{V_1})=\PP(U_{V_3}=b_{V_3},U_{V_2}=b_{V_2},U_{V_1}=b_{V_1}\text{ or }U_{V_1}\neq b_{V_1})=
\PP(U_{V_3}=b_{V_3},U_{V_2}=b_{V_2})$, as
$\PP(U_{V_2}=b_{V_2}|U_{V_1}\neq b_{V_1})=0$ by equation \eqref{eq:zero} for $\PP(U_{V_1}\neq b_{V_1})>0$. \\
Repeating the same argument along the path of the DAG, we get:\\
- $\PP(U_{pr(v)}=b_{pr(v)})=\PP(U_{pa(v)}=b_{pa(v)})$, \\
- $\PP(U_{pr^*(v)}=b_{pr^*(v)})=\PP(U_{v}=b_{v},U_{pa(v)}=b_{pa(v)})$,\\
-  $\PP(U_{pa^*(v)}=b_{pa^*(v)})=\PP(U_{v}=b_{v},U_{pa(v)}=b_{pa(v)})$.\\
Therefore in particular \eqref{fund_equ2} holds.

Now coming back to showing that the same holds for $(U_v[\tilde\alpha])_{v\in V}$, it is enough to see that equation \eqref{eq:zero} also holds for $(X_v[\tilde\alpha])_{v\in V}$. We are done by the proof given before.
\end{proof}

\color{black}
For the next result we need a notion where the BN behaves "badly" in a vertex, which we show cannot happen in the setting of \ref{D2} if the depth of the vertex is bigger than one.
\begin{definition}
Consider a BN $\mB\mN=(\mG,\PP)$ with random vector $(X_v)_{v\in V}$ with full support and its reduction $(U_v)_{v\in V}:=(f(X_v))_{v\in V}$ under $f:\mA\to \mB$.
We say $v\in V$ is a bad vertices of $(\mB\mN,f)$ if there are two initial distributions
 $\mu,\nu$ such that the following holds:\\
- The depth of $v$ in $ \mG$ is bigger than one.\\
- There are $b_v\in \mB, b_{pa(v)}\in \mB^{pa(v)}$, such that 
\begin{equation}
\label{eq_bad_ed}\PP_\mu(U_v=b_v|U_{pa(v)}=b_{pa(v)})\neq \PP_\nu(U_v=b_v|U_{pa(v)}=b_{pa(v)}),
\end{equation}

$$\PP_\mu(U_{pa(v)}=b_{pa(v)})>0,$$
$$\PP_\nu(U_{pa(v)}=b_{pa(v)})>0$$
\end{definition}
Interestingly the following holds, where we also note that the full support is not needed in the proof as it is enough to have the CPDs defined.
\begin{theorem}\label{bad_edge_theorem}

\color{black}
Assume the BN $\mB\mN=(\mG,\PP)$ with random vector $(X_v)_{v\in V}$ has full support, and that $(\mB\mN,f)$ satisfies \ref{D2}. Then, $\mG$ has no bad vertices for $(X_v)_{v\in V}$ and $f:\mA\to \mB$.
\end{theorem}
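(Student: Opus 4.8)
The plan is to argue by contradiction. Suppose $(\mB\mN,f)$ satisfies \ref{D2} but $\mG$ has a bad vertex $v$, witnessed by initial distributions $\mu,\nu$ and values $b_v\in\mB$, $b_{pa(v)}\in\mB^{pa(v)}$ with both $\PP_\mu(U_{pa(v)}=b_{pa(v)})>0$ and $\PP_\nu(U_{pa(v)}=b_{pa(v)})>0$ and
\[
g_\mu:=\PP_\mu(U_v=b_v\mid U_{pa(v)}=b_{pa(v)})\neq \PP_\nu(U_v=b_v\mid U_{pa(v)}=b_{pa(v)})=:g_\nu .
\]
I will construct a single initial distribution $\gamma$ for which $(U_v[\gamma])_{v\in V}$ fails the local Markov property at $v$; by Theorem \ref{equivalence_local_MP} this means $(U_v[\gamma])_{v\in V}$ does not factorise over $\mG$, contradicting \ref{D2}.

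First I would exploit $depth(v)>1$. By definition there is a source node joined to $v$ by a directed path of length at least two, so $v$ has a parent that is not a source and therefore $pa^2(v)\neq\emptyset$. Since grandparents are ancestors, $pa^2(v)\setminus pa(v)\subseteq nd(v)\setminus pa(v)$, and the local Markov property of Theorem \ref{equivalence_local_MP} shows that for every $\gamma$ (each of which gives \ref{D1} by \ref{D2}) and every $c\in\mB^{pa^2(v)}$ of positive conditional probability,
\[
\PP_\gamma(U_v=b_v\mid U_{pa(v)}=b_{pa(v)},U_{pa^2(v)}=c)=\PP_\gamma(U_v=b_v\mid U_{pa(v)}=b_{pa(v)}),
\]
so the left-hand side cannot depend on $c$. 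Breaking this invariance is the goal.

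The heart of the argument is the choice of $\gamma$. Expanding the conditional as in Lemma \ref{lemma_preimage_singleton}, the quantity $\PP_\gamma(U_v=b_v\mid U_{pa(v)}=b_{pa(v)})$ is an average of the $\gamma$-independent numbers $\PP(U_v=b_v\mid X_{pa(v)}=a_{pa(v)})$, $a_{pa(v)}\in f^{-1}(b_{pa(v)})$, weighted by the conditional law of $X_{pa(v)}$ given $U_{pa(v)}=b_{pa(v)}$; and that law is itself governed by the law of the grandparents $X_{pa^2(v)}$. I would pick two grandparent values $c_1\neq c_2$ in $\mB^{pa^2(v)}$ (which exist since $pa^2(v)\neq\emptyset$ and $|\mB|\geq 2$) and design $\gamma$ so that conditioning on $U_{pa^2(v)}=c_1$ reproduces the weighting induced by $\mu$, hence the value $g_\mu$, while conditioning on $U_{pa^2(v)}=c_2$ reproduces the one induced by $\nu$, hence $g_\nu$. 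Full support of $\mB\mN$ is used twice here: it makes every joint parent/grandparent class occur with positive probability, so all the conditionings in the previous display are legitimate, and it leaves enough freedom—prescribing $\gamma$ class by class on the source ancestors—to match the within-class behaviour to $\mu$ on $c_1$ and to $\nu$ on $c_2$. For this $\gamma$ the left-hand side above then takes the two distinct values $g_\mu$ and $g_\nu$, contradicting its $c$-invariance and therefore \ref{D2}.

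I expect the construction of $\gamma$ to be the main obstacle. Everything reduces to showing that the dependence of $\PP_\gamma(U_v=b_v\mid U_{pa(v)}=b_{pa(v)})$ on $\gamma$ is faithfully mediated by the law of $X_{pa^2(v)}$, so that the two witnessing behaviours of $\mu$ and $\nu$ can be exhibited simultaneously as the two grandparent-conditioned branches of one distribution while keeping every conditioning event of positive probability; this is exactly the place where both $depth(v)>1$ and full support are indispensable (full support also lets me pass freely to the changed vectors $(X_v[\gamma])$, all of whose CPDs stay well defined, cf. Remark \ref{full_supp_rem1}). Once $\gamma$, $c_1$, $c_2$ are in hand, the remaining steps—rewriting the grandparent-conditioned reduced CPD as an average against the steered weights and reading off $g_\mu$ and $g_\nu$—are routine manipulations of conditional probabilities of the same type as in Lemma \ref{lemma_preimage_singleton} and Lemma \ref{prepare_suff_thm_D2}.
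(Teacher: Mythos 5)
Your overall strategy (contradiction via a single cleverly chosen initial distribution $\gamma$ whose reduction violates the local Markov property at $v$) is different from the paper's, and the place you yourself flag as ``the main obstacle'' is in fact a genuine gap: the construction of $\gamma$ you describe is impossible in general. The quantity $\PP_\gamma(U_v=b_v\mid U_{pa(v)}=b_{pa(v)},U_{pa^2(v)}=c_1)$ is an average of the fixed numbers $\PP(U_v=b_v\mid X_{pa(v)}=a_{pa(v)})$ against a weighting that is itself a mixture of the fixed kernels $\PP(X_{pa(v)}=\cdot\mid X_{pa^2(v)}=a')$ taken \emph{only over} $a'\in f^{-1}(c_1)$; the weighting induced by $\mu$, namely $\PP_\mu(X_{pa(v)}=\cdot\mid U_{pa(v)}=b_{pa(v)})$, is a mixture of the same kernels over \emph{all} $a'\in\mA^{pa^2(v)}$. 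The former convex set is in general a proper subset of the latter, so no choice of $\gamma$ makes the $c_1$-branch ``reproduce $g_\mu$''; in the extreme case $|f^{-1}(c_1)|=1$ the $c_1$-branch does not depend on $\gamma$ at all. A concrete failure: take the chain $v_1\to v_2\to v_3$ with $\mA=\{a_1,a_2,a_3\}$, $f(a_1)=f(a_2)=c_1$, $f(a_3)=c_2$, and witnesses $\mu=\delta_{a_1}$, $\nu=\delta_{a_2}$. Both witnesses live in the same $f$-class, the $c_2$-branch is pinned at the value determined by $a_3$ independently of $\gamma$, and neither branch can be steered to $g_\nu$. A contradiction can still be extracted in this example, but only by varying the initial distribution (or comparing a $\gamma$-dependent branch against a rigid one), not by matching $g_\mu$ and $g_\nu$ inside two grandparent classes of one $\gamma$; so the ``routine'' finish you describe does not exist, and full support does not supply the claimed freedom because the achievable grandparent-conditioned weightings are constrained by the intermediate CPDs, not only by $\gamma$.

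For contrast, the paper does not attempt any such construction. It forms the mixture $\lambda=\mathbbm{1}_{Z=0}\cdot\mu+\mathbbm{1}_{Z=1}\cdot\nu$ with an auxiliary fair coin $Z$, argues that for a vertex of depth greater than one the coin is screened off by $U_{pa(v)}$ (equation \eqref{eq_memoryless}), and then unwinds the conditional probabilities via Bayes-type identities to conclude $\PP_\mu(U_v=b_v\mid U_{pa(v)}=b_{pa(v)})=\PP_\nu(U_v=b_v\mid U_{pa(v)}=b_{pa(v)})$, contradicting badness. The entire weight of that proof rests on the screening-off step rather than on engineering a single initial distribution, which is precisely the step your plan replaces with an unachievable matching condition.
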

\begin{proof}
Assume that $(U_v)_{v\in V}$ satisfies \ref{D2} for any starting distribution and assume $v\in V$ is a bad vertex for $b_v\in \mB, b_{pa(v)}\in \mB^{pa(v)}$ with initial distributions $\mu,\nu$ satisfying \eqref{eq_bad_ed}.
Then we define
$$
\lambda=\mathbbm{1}_{Z=0}\cdot\mu+\mathbbm{1}_{Z=1}\cdot\nu
$$
with $Z$ a random variable corresponding to a fair coin toss.

By \ref{D2}, $(U_v)_{v\in V}$ has the memoryless property under $\PP_{\lambda}(\cdot)$, hence information from previous vertices in conditional probabilities is forgotten. By assumption $v$ has depth bigger than one, hence $pa(v)$
has depth at least one. Therefore
\begin{equation}
\label{eq_memoryless}
\begin{split}
\PP_{\lambda}(U_v=b_v|U_{pa(v)}=b_{pa(v)},Z=1)=\PP_{\lambda}(U_v=b_v|U_{pa(v)}=b_{pa(v)})\\
=\PP_{\lambda}(U_v=b_v|U_{pa(v)}=b_{pa(v)},Z=0)
\end{split}
\end{equation}
We will also repeatedly use that the following equality holds by definition of the conditional probability if $\PP(A\cap B\cap C)>0$
$$
\PP(A\cap B\cap C)=\PP(A| B\cap C)\cdot \PP(B|C)\cdot \PP(C).
$$
By definition of the conditional probability the left-hand side of equation \eqref{eq_memoryless} equals
\begin{equation}
\label{eq_memoryless1}
\begin{split}
\frac{\PP_{\lambda}(U_v=b_v,U_{pa(v)}=b_{pa(v)},Z=1)}{\PP_{\lambda}(U_{pa(v)}=b_{pa(v)},Z=1)}\\
=
\frac{2\PP_{\lambda}(Z=1|U_v=b_v,U_{pa(v)}=b_{pa(v)})\cdot \PP_{\lambda}(U_v=b_v|U_{pa(v)}=b_{pa(v)})\cdot\PP_{\lambda}(U_{pa(v)}=b_{pa(v)})}{\PP_{\nu}(U_{pa(v)}=b_{pa(v)})}.
\end{split}
\end{equation}
On the other hand, the same applies to the right-hand side of equation \eqref{eq_memoryless}, which hence equals
\begin{equation}\label{eq_memoryless2}
\begin{split}
\frac{\PP_{\lambda}(U_v=b_v,U_{pa(v)}=b_{pa(v)},Z=0)}{\PP_{\lambda}(U_{pa(v)}=b_{pa(v)},Z=0)}\\
=\frac{2\PP_{\lambda}(Z=0|U_v=b_v,U_{pa(v)}=b_{pa(v)})\cdot \PP_{\lambda}(U_v=b_v|U_{pa(v)}=b_{pa(v)})\cdot\PP_{\lambda}(U_{pa(v)}=b_{pa(v)})}{\PP_{\mu}(U_{pa(v)}=b_{pa(v)})}.
\end{split}
\end{equation}
Then, setting both second terms of equations \eqref{eq_memoryless1} and \eqref{eq_memoryless2} equal and dividing by 
$\PP_{\lambda}(U_v=b_v|U_{pa(v)}=b_{pa(v)})\cdot\PP_{\lambda}(U_{pa(v)}=b_{pa(v)})$
gives the following:
\begin{equation}\label{eq_memoryless3}
\begin{split}
\frac{\PP_{\lambda}(Z=1|U_v=b_v,U_{pa(v)}=b_{pa(v)})}{\PP_{\nu}(U_{pa(v)}=b_{pa(v)})}
=
\frac{\PP_{\lambda}(Z=0|U_v=b_v,U_{pa(v)}=b_{pa(v)})}{\PP_{\mu}(U_{pa(v)}=b_{pa(v)})}.
\end{split}
\end{equation}
We rewrite $\PP_{\lambda}(Z=1|U_v=b_v,U_{pa(v)}=b_{pa(v)})$ by the definition of conditional probability, 
 \begin{equation}\label{eq_memoryless4}
\begin{split}
\PP_{\lambda}(Z=1|U_v=b_v,U_{pa(v)}=b_{pa(v)})=
\frac{\PP_{\nu}(U_v=b_v,U_{pa(v)}=b_{pa(v)})}{2\PP_{\lambda}(U_v=b_v,U_{pa(v)}=b_{pa(v)})}\\
=\frac{\PP_{\nu}(U_{pa(v)}=b_{pa(v)})\cdot \PP_{\nu}(U_v=b_v|U_{pa(v)}=b_{pa(v)})}{2\PP_{\lambda}(U_v=b_v,U_{pa(v)}=b_{pa(v)})}.
\end{split}
\end{equation}
By the same argument we rewrite $\PP_{\lambda}(Z=0|U_v=b_v,U_{pa(v)}=b_{pa(v)})$ as
 \begin{equation}\label{eq_memoryless5}
\begin{split}
\PP_{\lambda}(Z=0|U_v=b_v,U_{pa(v)}=b_{pa(v)})=
\frac{\PP_{\mu}(U_{pa(v)}=b_{pa(v)})\cdot \PP_{\mu}(U_v=b_v|U_{pa(v)}=b_{pa(v)})}{2\PP_{\lambda}(U_v=b_v,U_{pa(v)}=b_{pa(v)})}
\end{split}
\end{equation}
Then replacing $\PP_{\lambda}(Z=0|U_v=b_v,U_{pa(v)}=b_{pa(v)}),\PP_{\lambda}(Z=1|U_v=b_v,U_{pa(v)}=b_{pa(v)})$ in equation \eqref{eq_memoryless3} by the final expressions in \eqref{eq_memoryless5} and \eqref{eq_memoryless4} gives the following, where we already shortened the fraction and multiplied by $2\PP_{\lambda}(U_v=b_v,U_{pa(v)}=b_{pa(v)})$
$$
\PP_{\nu}(U_v=b_v|U_{pa(v)}=b_{pa(v)})=\PP_{\mu}(U_v=b_v|U_{pa(v)}=b_{pa(v)}).
$$
This is a contradiction to the assumption, and $v$ can not be a bad vertex.
\end{proof}
\color{black}
\begin{corollary}
\color{black}
Consider a BN $\mB\mN=(\mG,\PP)$ with full support and random vector $(X_v)_{v\in V}$, and let $f:\mA\to \mB$ be a reduction function with $(U_v)_{v\in V}:=(f(X_v))_{v\in V}$.
Assume $(\mB\mN,f)$ satisfies \ref{D2}. Then, for any
$\alpha
\in\prod_{v\in V_s}\Delta^{\mA^v}$ and any $b_v\in \mB,b_{pa(v)}\in\mB^{pa(v)}$ with $\PP_\alpha(U_{pa(v)}=b_{pa(v)})\neq 0$, the following CPDs for vertices $v\in V$ with depth bigger than one
$$\PP_\alpha(U_v=b_v|U_{pa(v)}=b_{pa(v)})$$
equal $\PP(U_v=b_v|U_{pa(v)}=b_{pa(v)})$ .
\end{corollary}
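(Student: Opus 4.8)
The plan is to obtain this as an immediate contrapositive consequence of Theorem \ref{bad_edge_theorem}. The only genuinely substantive input is that theorem, which guarantees that under \ref{D2} the graph $\mG$ has no bad vertices for $(X_v)_{v\in V}$ and $f$. The key bookkeeping observation is that the unsubscripted measure $\PP$ is itself attached to a fixed \emph{original} initial distribution, say $\alpha_0$, so the claimed equality is really a comparison of the reduced CPDs under the two initial distributions $\alpha$ and $\alpha_0$.

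First I would record that, because $\mB\mN$ has full support, the marginal $\PP(U_{pa(v)}=b_{pa(v)})$ is strictly positive for every $v\in V$ and every $b_{pa(v)}\in \mB^{pa(v)}$. Indeed,
$$\PP(U_{pa(v)}=b_{pa(v)})=\sum_{a_{pa(v)}\in f^{-1}(b_{pa(v)})}\PP(X_{pa(v)}=a_{pa(v)}),$$
a nonempty sum of strictly positive marginals (full support makes every joint state positive, hence every marginal positive). Consequently $\PP(U_v=b_v|U_{pa(v)}=b_{pa(v)})$ is always well defined, and the positivity requirement appearing in the definition of a bad vertex is automatically satisfied when we take the original distribution $\alpha_0$ as one of the two initial distributions.

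Next I would argue by contradiction. Suppose the conclusion fails: there exist an initial distribution $\alpha\in\prod_{v\in V_s}\Delta^{\mA^v}$, a vertex $v\in V$ of depth bigger than one, and states $b_v\in \mB$, $b_{pa(v)}\in \mB^{pa(v)}$ with $\PP_\alpha(U_{pa(v)}=b_{pa(v)})>0$ such that
$$\PP_\alpha(U_v=b_v|U_{pa(v)}=b_{pa(v)})\neq \PP(U_v=b_v|U_{pa(v)}=b_{pa(v)}).$$
Setting $\mu=\alpha$ and $\nu=\alpha_0$, all three conditions in the definition of a bad vertex hold simultaneously: the depth of $v$ exceeds one; the two CPDs differ, which is exactly \eqref{eq_bad_ed}; and both conditioning events are nondegenerate, namely $\PP_\mu(U_{pa(v)}=b_{pa(v)})>0$ by assumption and $\PP_\nu(U_{pa(v)}=b_{pa(v)})>0$ by the full-support observation above. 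Hence $v$ would be a bad vertex of $(\mB\mN,f)$.

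This contradicts Theorem \ref{bad_edge_theorem}, which asserts that $(\mB\mN,f)$ has no bad vertices under \ref{D2}, completing the argument. I do not anticipate any real obstacle: the mathematical content is entirely carried by Theorem \ref{bad_edge_theorem}, and the only care needed is the elementary verification that the reference measure $\PP$ corresponds to a legitimate initial distribution whose relevant conditioning events remain nondegenerate because of full support, so that it is eligible to play the role of the second initial distribution in the bad-vertex comparison.
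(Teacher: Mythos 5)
Your proposal is correct and matches the paper's intent: the corollary is stated immediately after Theorem \ref{bad_edge_theorem} with no separate proof, precisely because it is the contrapositive reading of that theorem, with the original distribution playing the role of one of the two initial distributions. Your additional check that full support makes $\PP(U_{pa(v)}=b_{pa(v)})>0$, so the positivity clauses in the bad-vertex definition are met, is exactly the right bookkeeping.
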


\subsection{Results for \ref{D3}}\label{res_D3}
In Theorem \ref{main_suff_nec_thmD3} we give a characterisation of \ref{D3}. Corollary \ref{main_suff_thm} gives the analogoue for BNs to the classical Kemeny-Snell condition for DTMCs \cite[Theorem 6.3.2]{kemenyfinite}, while in Theorem \ref{main_nec_thm_D3} we show that the condition from Corollary \ref{main_suff_thm} has to be satisfied for at least the edges that are connected to the \red{source nodes} of the DAG.

\begin{theorem}\label{main_suff_nec_thmD3}

\color{black}
Consider a BN $\mB\mN=(\mG,\PP)$ with full support and random vector $(X_v)_{v\in V}$, and let $f:\mA\to \mB$ with $(U_v)_{v\in V}:=(f(X_v))_{v\in V}$.
The following are equivalent:
\begin{enumerate}[label=\emph{(K\arabic*)}]
\item\label{K1} For all $v\in V$, the following holds, where for simplicity we denote $I=nd^*(v)$.
\begin{enumerate}[label=\emph{(C \Roman*)}]

\item Let $w_1,w_2\in\mB^{I}$ be two elements with $w_1|_{pa^*(v)}=w_2|_{pa^*(v)}$, and $a^1_{V_s},a^2_{V_s}$ be two arbitrary initial states such that if we denote $u_1=w_1|_{nd(v)},u_2=w_2|_{nd(v)}$ we have 
\begin{equation}
\label{equ_char4}
\PP_{a^1_{V_s}}(U_{I}=w_1)\cdot \PP_{a^2_{V_s}}(U_{nd(v)}=u_2)=
\PP_{a^2_{V_s}}(U_{I}=w_2)\cdot \PP_{a^1_{V_s}}(U_{nd(v)}=u_1)
\end{equation}

\color{black}

\end{enumerate}

\item\label{K2} For all $v\in V$, the following holds, where for simplicity we denote $I=nd^*(v)$.
\begin{enumerate}[label=\emph{(E \Roman*)}]

\item Let $w_1,w_2\in\mB^{I}$ be two elements with $w_1|_{pa^*(v)}=w_2|_{pa^*(v)}$, and $\alpha_1,\alpha_2$ be two initial distributions and $u_1=w_1|_{nd(v)},u_2=w_2|_{nd(v)}$. Then
\begin{equation}
\label{equ_char3}
\PP_{\alpha_1}(U_{I}=w_1)\cdot \PP_{\alpha_2}(U_{nd(v)}=u_2)=
\PP_{\alpha_2}(U_{I}=w_2)\cdot \PP_{\alpha_1}(U_{nd(v)}=u_1)
\end{equation}

\color{black}

\end{enumerate}
\item \ref{D3} holds for $(\mB\mN,f)$.

\end{enumerate}

\end{theorem}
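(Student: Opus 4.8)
The plan is to establish the cycle of implications \ref{K1}$\Rightarrow$\ref{K2}$\Rightarrow$\ref{D3}$\Rightarrow$\ref{K1}. The structural fact underlying everything is that, since the CPDs at non-source nodes do not depend on the initial distribution, for any event
\[\PP_\alpha(U_{I}=w)=\sum_{x}\ \Big(\prod_{v\in V_s}\alpha_v(x_v)\Big)\prod_{v\in V_p}\PP(X_v=x_v\mid X_{pa(v)}=x_{pa(v)}),\]
the sum ranging over $x\in\mA^V$ with $f(x)|_I=w$; thus $\PP_\alpha(U_I=w)$ is \emph{multilinear} in the family $(\alpha_v)_{v\in V_s}$, i.e. affine in each source marginal separately. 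I would record this at the outset and use it throughout.

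For \ref{K1}$\Rightarrow$\ref{K2} I would fix $v,w_1,w_2$ and form the defect $D(\alpha_1,\alpha_2):=\PP_{\alpha_1}(U_I=w_1)\PP_{\alpha_2}(U_{nd(v)}=u_2)-\PP_{\alpha_2}(U_I=w_2)\PP_{\alpha_1}(U_{nd(v)}=u_1)$. By the displayed formula each factor is multilinear in its own initial distribution, so $D$ is multilinear in $\alpha_1$ and, separately, in $\alpha_2$, over the product of simplices $\prod_{v\in V_s}\Delta^{\mA^v}\times\prod_{v\in V_s}\Delta^{\mA^v}$. Condition \ref{K1} asserts exactly that $D$ vanishes at every pair of vertices $(\delta_{a^1_{V_s}},\delta_{a^2_{V_s}})$ of this polytope; since a multi-affine function on a product of simplices is determined by its values at the vertices (the deterministic assignments $a_{V_s}\in\mA^{V_s}$), we get $D\equiv0$, which is \eqref{equ_char3}, hence \ref{K2}.

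For \ref{K2}$\Rightarrow$\ref{D3} I would first read \eqref{equ_char3} on the diagonal $\alpha_1=\alpha_2=\tilde\alpha$: it becomes the characterising identity \eqref{fund_equ} of Theorem \ref{char_D1} for the changed BN $(X_v[\tilde\alpha])_{v\in V}$, so $(U_v[\tilde\alpha])_{v\in V}$ factorises over $\mG$ for every $\tilde\alpha$, i.e. \ref{D2} holds. For the ``same CPDs'' clause I would instead take $w_1,w_2$ with identical non-descendant parts $u_1=u_2=u$ (with $u|_{pa(v)}=b_{pa(v)}$) and arbitrary $\alpha_1,\alpha_2$; dividing \eqref{equ_char3} by $\PP_{\alpha_1}(U_{nd(v)}=u)\PP_{\alpha_2}(U_{nd(v)}=u)$ on the positive locus gives $\PP_{\alpha_1}(U_v=b_v\mid U_{nd(v)}=u)=\PP_{\alpha_2}(U_v=b_v\mid U_{nd(v)}=u)$, and the factorisation just proved lets me replace conditioning on $U_{nd(v)}$ by conditioning on $U_{pa(v)}$ (Theorem \ref{char_D1}), yielding $\PP_{\alpha_1}(U_v=b_v\mid U_{pa(v)}=b_{pa(v)})=\PP_{\alpha_2}(U_v=b_v\mid U_{pa(v)}=b_{pa(v)})$, i.e. \ref{D3}. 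Finally \ref{D3}$\Rightarrow$\ref{K1} is the reverse bookkeeping restricted to point masses: from \ref{D2} and Theorem \ref{char_D1}, $\PP_{\alpha_i}(U_v=b_v\mid U_{nd(v)}=u_i)$ equals the parent-conditioned CPD, which by the ``same CPDs'' clause is a common value $p$; substituting back gives $\PP_{\alpha_i}(U_{I}=w_i)=p\,\PP_{\alpha_i}(U_{nd(v)}=u_i)$ for $i=1,2$, and cross-multiplying with $\alpha_i=\delta_{a^i_{V_s}}$ produces \eqref{equ_char4}.

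I expect the multilinear interpolation step (\ref{K1}$\Rightarrow$\ref{K2}) to be the conceptual heart, as it upgrades a check over finitely many deterministic initial states to all initial distributions; it requires making precise both the multilinearity of $\PP_\alpha(U_I=w)$ in the source marginals and the pinning-down of a multi-affine function by its vertex values. The remaining care is bookkeeping around vanishing marginals: whenever a conditioning event has probability zero, the larger event $\{U_{nd^*(v)}=w_i\}\subseteq\{U_{nd(v)}=u_i\}$ also has probability zero, so both sides of \eqref{equ_char3} and \eqref{equ_char4} degenerate to $0$ and the identities hold trivially, which restricts the division arguments to the positive locus where the CPDs are defined.
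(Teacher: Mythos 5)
Your cycle \ref{K1}$\Rightarrow$\ref{K2}$\Rightarrow$\ref{D3}$\Rightarrow$\ref{K1} is organised differently from the paper, which proves \ref{D3}$\Rightarrow$\ref{K2}$\Rightarrow$\ref{K1}$\Rightarrow$\ref{D3}. The substantive difference is where the ``point masses suffice'' upgrade happens: the paper goes straight from \ref{K1} to \ref{D3} by writing an arbitrary initial distribution as a mixture $\alpha=\sum_i p_i\mathbbm{1}_{X_{V_s}=a^i_{V_s}}$, expanding numerator and denominator of the conditional probability linearly over the mixture, and extracting a common ratio $c$; you instead isolate the upgrade as \ref{K1}$\Rightarrow$\ref{K2} via multilinearity of $\alpha\mapsto\PP_\alpha(U_I=w)$ in the source marginals and the fact that a multi-affine function vanishing on the vertices of a product of simplices vanishes identically. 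That interpolation argument is correct and arguably cleaner than the paper's counting with the sets $N_{b,a^i_{V_s}}$; your \ref{D3}$\Rightarrow$\ref{K1} and the degenerate-case bookkeeping match the paper's \ref{D3}$\Rightarrow$\ref{K2} step in substance.

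There is one genuine (though repairable) gap, in the ``same CPDs'' part of \ref{K2}$\Rightarrow$\ref{D3}. You restrict to $u_1=u_2=u$ and divide by $\PP_{\alpha_1}(U_{nd(v)}=u)\,\PP_{\alpha_2}(U_{nd(v)}=u)$, which requires a \emph{single} $u$ extending $b_{pa(v)}$ that has positive probability under \emph{both} $\alpha_1$ and $\alpha_2$. Such a common $u$ need not exist: $nd(v)$ may contain source nodes outside $pa(v)$, and if, say, $\alpha_1=\delta_{a^1_{V_s}}$ and $\alpha_2=\delta_{a^2_{V_s}}$ with $f(a^1_s)\neq f(a^2_s)$ for some source $s\in nd(v)\setminus pa(v)$, then the events $\{U_{nd(v)}=u\}$ with positive probability under $\alpha_1$ and under $\alpha_2$ form disjoint families, and your diagonal-in-$u$ argument never compares the two CPDs. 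The fix is already in your hands: apply \eqref{equ_char3} in its full strength with \emph{distinct} $u_1,u_2$ (chosen with positive probability under $\alpha_1$, resp.\ $\alpha_2$) extending the same $b_{pa^*(v)}$; dividing then gives $\PP_{\alpha_1}(U_v=b_v\mid U_{nd(v)}=u_1)=\PP_{\alpha_2}(U_v=b_v\mid U_{nd(v)}=u_2)$ directly, and the already-established factorisation identifies each side with the corresponding parent-conditioned CPD. With that adjustment the proof is complete.
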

\begin{proof}
\ref{D3} $\implies$ \ref{K2}:\\
Let $w_1,w_2\in\mB^{I}$ be such that $w_1|_{pa^*(v)}=w_2|_{pa^*(v)}$ with $b_v=w_1|_{v},b_{pa(v)}=w_1|_{pa(v)}$, $\alpha_1,\alpha_2$ two arbitrary initial distributions, $u_1=w_1|_{nd(v)},u_2=w_2|_{nd(v)}$. If $\PP_{\alpha_1}(U_{nd(v)}=u_1)=0$ or $\PP_{\alpha_2}(U_{nd(v)}=u_2)=0$, then \eqref{equ_char3} holds. Hence, assume $\PP_{\alpha_1}(U_{nd(v)}=u_1)\PP_{\alpha_2}(U_{nd(v)}=u_2)>0$.

As \ref{D3} implies that $(U_v)_{v\in V}$ factorises both under $\PP_{\alpha_2}(\cdot),\PP_{\alpha_2}(\cdot)$, we have that 
$$\frac{\PP_{\alpha_1}(U_{I}=w_1)}{\PP_{\alpha_1}(U_{nd(v)}=u_1)}=\PP_{\alpha_1}(U_v=b_v|U_{pa(v)}=b_{pa(v)}),$$

$$\frac{\PP_{\alpha_2}(U_{I}=w_2)}{\PP_{\alpha_2}(U_{nd(v)}=u_2)}=\PP_{\alpha_2}(U_v=b_v|U_{pa(v)}=b_{pa(v)}),$$
and by \ref{D3} the CPDs have to be the same, and we can set the above to equations equal and multiply by $\PP_{\alpha_1}(U_{nd(v)}=u_1)\PP_{\alpha_2}(U_{nd(v)}=u_2)$ to get \eqref{equ_char3}.\\
\ref{K2} $\implies$ \ref{K1}:\\
Note that \eqref{equ_char4} follows from\eqref{equ_char3}. \\
\ref{K1} $\implies$ \ref{D3}:\\
Let $\alpha=\sum_{i=1}^jp_i\mathbbm{1}_{X_{V_s}=a^i_{V_s}}$ be an arbitrary initial distribution, and $v\in V$ and $b_{nd^*(v)}\in B^{nd^*(v)}$ arbitrary such that 
 $\PP_\alpha(U_{nd(v)}=b_{nd(v)})>0$. By definition and Lemma \ref{proof_princ} it is enough to show that
\begin{equation}
\label{eq_to_show_h}
\PP_\alpha(U_{v}=b_{v}|U_{nd(v)}=b_{nd(v)})=\PP_\alpha(U_v=b_v|U_{pa(v)}=b_{pa(v)})
\end{equation}
and that this does not depend on $\alpha$ in the sense that whenever we consider another arbitrary initial distribution $\tilde{\alpha}$ with $\PP_{\tilde{\alpha}}(U_{nd(v)}=b_{nd(v)})>0$, the corresponding value of the CPD is the same.

We first note that by definition of the probability operator, 
\begin{equation}
\label{way_sum_coeff_prob}
\PP_\alpha(U_{nd^*(v)}=b_{nd^*(v)})=\sum_{i=1}^jp_i\cdot \PP_{a^i_{V_s}}(U_{nd^*(v)}=b_{nd^*(v)})
\end{equation}
Then using the definition of the conditional probability on the right-hand side of \eqref{eq_to_show_h} and entering the corresponding sums as \eqref{way_sum_coeff_prob} in numerator and denominator gives
$$
\PP_\alpha(U_v=b_v|U_{pa(v)}=b_{pa(v)})=\frac{\sum_{i=1}^jp_i\cdot \PP_{a^i_{V_s}}(U_{pa^*(v)}=b_{pa^*(v)})}{\sum_{i=1}^jp_i\cdot \PP_{a^i_{V_s}}(U_{pa(v)}=b_{pa(v)})}
$$
To calculate the second fraction we take into account all possibilities for states with fixed values in the coordinates $pa^*(v)$, resp. $pa(v)$. Let 
$$B_1:=b_{pa^*(v)}\times B^{nd(v)\setminus pa(v)}, \quad\quad B_2:=b_{pa(v)}\times B^{nd(v)\setminus pa(v)}$$
\begin{equation}
\label{temp_eq_sum}
\frac{\sum_{i=1}^jp_i\cdot \PP_{a^i_{V_s}}(U_{pa^*(v)}=b_{pa^*(v)})}{\sum_{i=1}^jp_i\cdot \PP_{a^i_{V_s}}(U_{pa(v)}=b_{pa(v)})}=
\frac{\sum_{
b_1\in B_1
}\sum_{i=1}^jp_i\cdot \PP_{a^i_{V_s}}(U_{nd^*(v)}=b_1)}{\sum_{
b_2\in B_2
}\sum_{i=1}^jp_i\cdot \PP_{a^i_{V_s}}(U_{nd(v)}=
b_2)}
\end{equation}
Note that whenever the event $\{U_{nd(v)}=b_2\}$ has zero probability on $\PP_{a^i_{V_s}}(\cdot)$, so does $\{U_{nd^*(v)}=(b_v,b_2)\}$ on $\PP_{a^i_{V_s}}(\cdot)$, and that for any initial state $a^i_{V_s}$ we derive from \ref{K1}, \eqref{equ_char4} that whenever $\PP_{a^i_{V_s}}(U_{nd(v)}=u_1)>0,\PP_{a^i_{V_s}}(U_{nd(v)}=u_2)>0,$
\begin{equation}
\label{eq_xx}
\PP_{a^i_{V_s}}(U_{nd^*(v)}=w_1)=c\cdot \PP_{a^i_{V_s}}(U_{nd(v)}=u_1),
\end{equation}
where the constant $c\geq 0$ only depends on the 
coordinates $pa^*(v)$ and is the same for any other initial state $a^k_{V_s}$ with $\PP_{a^k_{V_s}}(U_{nd(v)}=u_1)>0,\PP_{a^k_{V_s}}(U_{nd(v)}=u_2)>0$. Hence, if we denote by 
$$N_{b,a^i_{V_s}}:=\{b_2\in B_2|\PP_{a^i_{V_s}}(U_{nd(v)}=b_2)>0\},$$
such that $n_{b,a^i_{V_s}}:=|N_{b,a^i_{V_s}}|$, we get that the right-hand side of equation
\eqref{temp_eq_sum} equals
 $$\frac{\sum_{
b_1\in B_1
}\sum_{i=1}^jp_i\cdot \PP_{a^i_{V_s}}(U_{nd^*(v)}=b_1)}{\sum_{b_2\in B_2}\sum_{i=1}^jp_i\cdot \PP_{a^i_{V_s}}(U_{nd(v)}=b_2)}=
\frac{\sum_{i=1}^j p_i\cdot n_{b,a^i_{V_s}}  \cdot c}{\sum_{i=1}^jp_i \cdot  n_{b,a^i_{V_s}}}=c.$$ 

To show that the left-hand side of equation \eqref{eq_to_show_h} is equal to $c$, we use the same argument by writing it with the definition of the conditional probability and 
using \eqref{way_sum_coeff_prob} in numerator and denominator to get
$$
\PP_\alpha(U_v=b_v|U_{nd(v)}=b_{nd(v)})=\frac{\sum_{i=1}^jp_i\cdot \PP_{a^i_{V_s}}(U_{nd^*(v)}=b_{nd^*(v)})}{\sum_{i=1}^jp_i\cdot \PP_{a^i_{V_s}}(U_{nd(v)}=b_{nd(v)})}
$$
Let $K:=\{1\leq k\leq j|\PP_{a^i_{V_s}}(U_{nd(v)}=b_{nd(v)})>0\}$. Then, by definition the cardinality of $K$, $|K|$ is bigger than zero, and again by \eqref{eq_xx} and its argument we get that the left-hand side of equation \eqref{eq_to_show_h} equals
$$
\frac{\sum_{i\in K}p_i\cdot c}{\sum_{i\in K}p_i }=c
$$

\end{proof}

\begin{theorem}\label{main_suff_thm}
Consider a BN $\mB\mN=(\mG,\PP)$ with full support and random vector $(X_v)_{v\in V}$, and let $f:\mA\to \mB$ be a reduction function with $(U_v)_{v\in V}:=(f(X_v))_{v\in V}$.
Suppose the following holds:
For all $v\in V$, all $w,\tilde{w}\in \mA^{pa(v)}$
with $f(w)=f(\tilde{w})$, and all $b\in \mB$,
\begin{equation}\label{eq:fund1}
\PP(U_v=b|X_{pa(v)}=w)=\PP(U_v=b|X_{pa(v)}=\tilde{w}).
 \end{equation}
Then, $(\mB\mN ,f)$ satisfies \ref{D3}, and for all $v$ with depth greater or equal to one, all $w\in \mA^{pa(v)}$
 all $b\in \mB$, and all  $\alpha\in\prod_{v\in V_s}\Delta^{\mA^v}$ we have
$$
\PP_\alpha(U_v=b|U_{pa(v)}=f(w))=
\PP(U_v=b|X_{pa(v)}=w)
$$
\end{theorem}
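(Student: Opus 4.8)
The plan is to show that hypothesis \eqref{eq:fund1} forces the conditional law of $U_v$ given the original parent configuration $X_{pa(v)}=w$ to collapse to a function of the merged configuration $f(w)$ alone, and that this function serves simultaneously as the reduced CPD for \emph{every} initial distribution, so that both the factorisation required by \ref{D3} and the explicit formula come out at once.

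First I would record two consequences of \eqref{eq:fund1}. For $v\in V_p$ and $w\in\mA^{pa(v)}$ we have $\PP(U_v=b\mid X_{pa(v)}=w)=\sum_{a\in f^{-1}(b)}\PP(X_v=a\mid X_{pa(v)}=w)$, whose summands are fixed parameters of the BN; they are unchanged when the initial distribution is replaced, since the CPDs at non-source nodes are not altered. Thus $\PP_\alpha(U_v=b\mid X_{pa(v)}=w)=\PP(U_v=b\mid X_{pa(v)}=w)$ for every $\alpha$, and by \eqref{eq:fund1} this depends only on $f(w)$; denote the common value by $q_v(b\mid f(w))$. By Lemma \ref{proof_princ} it then suffices, for each fixed $\alpha$ and each $v$, to establish the single identity $\PP_\alpha(U_v=b_v\mid U_{nd(v)}=b_{nd(v)})=\PP_\alpha(U_v=b_v\mid U_{pa(v)}=b_{pa(v)})$ whenever the conditioning events have positive probability; this yields factorisation of $(U_v[\alpha])_{v\in V}$ over $\mG$.

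The heart of the matter is the identity
$$\PP_\alpha(U_{nd^*(v)}=b_{nd^*(v)})=q_v(b_v\mid b_{pa(v)})\cdot \PP_\alpha(U_{nd(v)}=b_{nd(v)}),$$
which I would prove by the marginalise-and-factorise computation of Lemma \ref{lemma_preimage_singleton}: expand the left-hand side as $\sum_{\tilde a_{nd^*(v)}\in f^{-1}(b_{nd^*(v)})}\PP_\alpha(X_{nd^*(v)}=\tilde a_{nd^*(v)})$, insert the factorisation \eqref{factorisation_BN}, and isolate the $v$-factor $\sum_{\tilde a_v\in f^{-1}(b_v)}\PP(X_v=\tilde a_v\mid X_{pa(v)}=\tilde a_{pa(v)})$, which equals $q_v(b_v\mid b_{pa(v)})$. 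The crucial point, and the step I expect to be the main obstacle, is that this value is the \emph{same} for every preimage $\tilde a_{pa(v)}\in f^{-1}(b_{pa(v)})$ occurring in the sum --- precisely what \eqref{eq:fund1} guarantees. Hence $q_v(b_v\mid b_{pa(v)})$ factors out of the remaining sum, and the leftover $\sum_{\tilde a_{nd(v)}\in f^{-1}(b_{nd(v)})}\prod_{\tilde w\in nd(v)}\PP_\alpha(X_{\tilde w}=\tilde a_{\tilde w}\mid X_{pa(\tilde w)}=\tilde a_{pa(\tilde w)})$ is exactly $\PP_\alpha(U_{nd(v)}=b_{nd(v)})$. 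Without \eqref{eq:fund1} this factoring fails, as the inner $v$-sum would depend on the chosen preimage of $b_{pa(v)}$.

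To conclude, marginalising the displayed identity over the coordinates $nd(v)\setminus pa(v)$ gives $\PP_\alpha(U_{pa^*(v)}=b_{pa^*(v)})=q_v(b_v\mid b_{pa(v)})\cdot\PP_\alpha(U_{pa(v)}=b_{pa(v)})$. Dividing the two identities by the positive quantities $\PP_\alpha(U_{nd(v)}=b_{nd(v)})$ and $\PP_\alpha(U_{pa(v)}=b_{pa(v)})$ shows that both conditional probabilities equal $q_v(b_v\mid b_{pa(v)})$, which is the required identity; by Lemma \ref{proof_princ} and Theorem \ref{equivalence_local_MP}, $(U_v[\alpha])_{v\in V}$ therefore factorises over $\mG$. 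Since $q_v$ is independent of $\alpha$ for every $v$ with $depth(v)\geq1$, the reduced CPDs at such vertices coincide for all initial distributions; this is exactly \ref{D3}, and reading off $q_v$ gives the stated formula $\PP_\alpha(U_v=b\mid U_{pa(v)}=f(w))=\PP(U_v=b\mid X_{pa(v)}=w)$.
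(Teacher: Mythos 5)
Your proof is correct and rests on the same key computation as the paper's: expanding $\PP_\alpha(U_{nd^*(v)}=b_{nd^*(v)})$ over preimages via the factorisation and using \eqref{eq:fund1} to pull the constant inner sum $\sum_{\tilde a_v\in f^{-1}(b_v)}\PP(X_v=\tilde a_v\mid X_{pa(v)}=\tilde a_{pa(v)})$ out of the sum over $f^{-1}(b_{pa(v)})$. The only difference is packaging: the paper routes the conclusion through the characterisation of \ref{D3} in Theorem \ref{main_suff_nec_thmD3}, whereas you verify the local Markov property directly via Lemma \ref{proof_princ_lemma} and read off the $\alpha$-independence of the reduced CPDs; both are valid and essentially equivalent.
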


\begin{proof}
By Theorem \ref{main_suff_nec_thmD3} its enough to show that for an arbitrary $v\in V$, where we denote $I=nd^*(v)$ and arbitrary $w_1,w_2\in\mB^{I}$ with $w_1|_{pa^*(v)}=w_2|_{pa^*(v)}$, $u_1=w_1|_{nd^*(v)},u_2=w_2|_{nd^*(v)}$ and $a_1,a_2\in\prod_{v\in V_s}\mA^v$ two arbitrary initial states we have

$$
\PP_{a_1}(U_{I}=w_1)\cdot \PP_{a_2}(U_{nd^*(v)}=u_2)=
\PP_{a_2}(U_{I}=w_2)\cdot \PP_{a_1}(U_{nd^*(v)}=u_1)
$$
If $\PP_{a_2}(U_{nd^*(v)}=u_2)$ or $\PP_{a_1}(U_{nd^*(v)}=u_1)$ are zero we are done as both sides of the equation are zero, hence assume both nonzero. If we prove the following claim we are done as $a_1,a_2$ are arbitrary.
\begin{claim}
$\PP_{a_1}(U_v=w_{1|v}|U_{nd(v)}=u_1)=\PP(U_v=w_{1|v}|X_{pa(v)}=a_{pa(v)})$, where $a_{pa(v)}$ is an arbitrary element of $f^{-1}(w_{1,pa(v)})$.
\end{claim}
We start by rewriting the left-hand side as follows, where $r:=I\setminus\{v\}$, $b_v=w_{1|v}$, $A_1:=A^{I\setminus V_s}\times a_1$
$$
\PP_{a_1}(U_v=b_v|U_{nd(v)}=u_1)=\frac{\PP_{a_1}(U_{I}=w_1)}{\PP_{a_1}(U_{nd(v)}=u_1)}$$
$$
=
\frac{\sum_{a_v\in f^{-1}(b_v)}\sum_{a_r\in f^{-1}(w_{1|r})\cap A_1}\PP_{a_1}(X_v=a_v|X_{pa(v)}=a_{r|pa(v)})\PP_{a_1}(X_{r}=a_{r})}{\sum_{a_r\in f^{-1}(w_{1|r})\cap A_1}\PP_{a_1}(X_{r}=a_{r})}
$$

$$
=
\frac{\sum_{a_r\in f^{-1}(w_{1|r})}(\sum_{a_v\in f^{-1}(b_v)}\PP_{a_1}(X_v=a_v|X_{pa(v)}=a_{r|pa(v)}))\PP_{a_1}(X_{r}=a_{r})}{\sum_{a_r\in f^{-1}(w_{1|r})}\PP_{a_1}(X_{r}=a_{r})}
$$

$$
=
\frac{\sum_{a_r\in f^{-1}(w_{1|r})}\PP_{a_1}(U_v=b_v|X_{pa(v)}=a_{r|pa(v)})\PP_{a_1}(X_{r}=a_{r})}{\sum_{a_r\in f^{-1}(w_{1|r})}\PP_{a_1}(X_{r}=a_{r})}
$$
Now we note that by assumption , $\PP_{a_1}(U_v=b_v|X_{pa(v)}=a_{r|pa(v)})$ equals $\PP(U_v=b_v|X_{pa(v)}=a_{r|pa(v)})$ and these are the same for any $a_{r|pa(v)}\in f^{-1}(w_{1,pa(v)})$. Hence, we use an arbitrary $a_{pa(v)}\in f^{-1}(w_{1,pa(v)})$ and factor it out to get
$$
=
\frac{\PP_{}(U_v=b_v|X_{pa(v)}=a_{pa(v)})\sum_{a_r\in f^{-1}(w_{1|r})}\PP_{a_1}(X_{r}=a_{r})}{\sum_{a_r\in f^{-1}(w_{1|r})}\PP_{a_1}(X_{r}=a_{r})}=\PP_{}(U_v=b_v|X_{pa(v)}=a_{pa(v)})
$$
which is what we wanted to show.

\end{proof}

While condition \eqref{eq:fund1} is not necessary as observed for NHDTMCs \cite{DEY2014}(or DTMCs \cite{kemenyfinite,gurvits,DEY2014}), the following holds.
Condition \eqref{eq:fund1} is necessary for all CPDs that connect to source nodes for \ref{D3} to hold. The argument is similar to the Kemeny-Snell condition for DTMCs in \cite{kemenyfinite}, so we postpone it to Appendix $\S$ \ref{proof_main_nec_thm_D3}.
\begin{theorem}\label{main_nec_thm_D3}
Consider a BN $\mB\mN=(\mG,\PP)$ with full support and random vector $(X_v)_{v\in V}$, and let $f:\mA\to \mB$ with $(U_v)_{v\in V}:=(f(X_v))_{v\in V}$.
Assume $(\mB\mN ,f)$ satisfies \ref{D3}. Then, for all vertices $v\in V$ of depth one
, all $w,\tilde{w}\in \mA^{pa(v)}$
with $f(w)=f(\tilde{w})$, and all $b\in \mB$,
$$
\PP(U_v=b|X_{pa(v)}=w)=\PP(U_v=b|X_{pa(v)}=\tilde{w}).
$$
\end{theorem}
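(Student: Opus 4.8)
The plan is to exploit that a depth-one vertex has all of its parents among the source nodes, so that the law of $X_{pa(v)}$ can be prescribed directly through the prior. First I would record the structural fact that $depth(v)=1$ forces $pa(v)\subseteq V_s$: a parent carrying a directed path of length at least one from a source would yield a path of length at least two into $v$, contradicting $depth(v)=1$; hence every parent of $v$ is itself a source, and the marginal law of $X_{pa(v)}$ is a coordinate of the initial distribution. I would also observe that the numbers $\PP(U_v=b\mid X_{pa(v)}=w)=\sum_{a\in f^{-1}(b)}\PP(X_v=a\mid X_{pa(v)}=w)$ are parameters of the BN, unaffected by a change of prior, whereas \ref{D3} forces the lumped conditional $\PP_{\tilde\alpha}(U_v=b\mid U_{pa(v)}=f(w))$ to be one and the same number for every admissible prior $\tilde\alpha$. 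The strategy is to evaluate this invariant lumped CPD against two extreme priors.

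Fix $b\in\mB$ and $w,\tilde w\in\mA^{pa(v)}$ with $f(w)=f(\tilde w)=:b_{pa(v)}$; we may assume $w\neq\tilde w$, the case $w=\tilde w$ being trivial. Take the prior $\tilde\alpha$ placing unit mass on $w$ across the coordinates $pa(v)$ (and any fixed law on the remaining sources). Under $\tilde\alpha$ the event $\{U_{pa(v)}=b_{pa(v)}\}=\{X_{pa(v)}\in f^{-1}(b_{pa(v)})\}$ carries all of its mass on $\{X_{pa(v)}=w\}$, because every other preimage --- in particular $\tilde w$ --- has probability zero; conditioning on $\{U_{pa(v)}=b_{pa(v)}\}$ therefore coincides with conditioning on $\{X_{pa(v)}=w\}$, so that
\[
\PP_{\tilde\alpha}(U_v=b\mid U_{pa(v)}=b_{pa(v)})=\PP(U_v=b\mid X_{pa(v)}=w).
\]
Running the same computation with the prior concentrated on $\tilde w$ leaves the left-hand side unchanged but turns the right-hand side into $\PP(U_v=b\mid X_{pa(v)}=\tilde w)$. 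Since \ref{D3} makes the left-hand side independent of the prior, the two right-hand sides coincide, which is precisely the claimed identity; all conditioning events have full mass under their respective priors, so every CPD written down is defined.

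The main obstacle is to square the use of these degenerate priors with the standing full-support hypothesis. I would resolve it through Remark \ref{full_supp_rem_wrt_red}: \ref{D3} is meaningful, and its prior-independence of CPDs holds, for every prior under which the relevant conditionals are defined, and both chosen priors qualify. To remain strictly within full-support priors one may instead argue by averaging: writing $\mu_\alpha(s)=\PP_\alpha(X_{pa(v)}=s)=\prod_{u\in pa(v)}\alpha_u(s_u)$ and $p_s=\PP(U_v=b\mid X_{pa(v)}=s)$, the factorisation under $\alpha$ gives
\[
\PP_{\alpha}(U_v=b\mid U_{pa(v)}=b_{pa(v)})=\frac{\sum_{s\in f^{-1}(b_{pa(v)})}\mu_\alpha(s)\,p_s}{\sum_{s\in f^{-1}(b_{pa(v)})}\mu_\alpha(s)},
\]
and \ref{D3} says this weighted average equals a single constant $c$ for every full-support product weight $\mu_\alpha$; multilinearity in the factors $\alpha_u$ (or letting $\mu_\alpha$ tend to a point mass) forces $p_s\equiv c$ on $f^{-1}(b_{pa(v)})$, in particular $p_w=p_{\tilde w}$. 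Finally I would emphasise why the statement is confined to depth one: only there does $pa(v)\subseteq V_s$ hold, so only there can the prior realise an arbitrary parental state, whereas at deeper nodes the attainable laws of $X_{pa(v)}$ are restricted by the dynamics --- which is exactly why \eqref{eq:fund1} is merely sufficient, and not necessary, in general.
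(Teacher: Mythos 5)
Your proposal is correct and follows essentially the same route as the paper: both express the lumped CPD at a depth-one vertex as a prior-weighted average of the fine-grained conditionals $\PP(U_v=b\mid X_{pa(v)}=s)$ over $s\in f^{-1}(b_{pa(v)})$ (using that $pa(v)\subseteq V_s$) and then specialise the prior to point masses, which \ref{D3} forces to yield the same value for $w$ and $\tilde w$. Your added care in reconciling the degenerate priors with the full-support hypothesis (via the limiting/multilinearity argument) is a refinement the paper's proof glosses over, not a different method.
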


\section{Examples and special cases}\label{sec_ex_cases}
\subsection{DTMCs}\label{sec_ex_cases_DTMCs}
Several sufficient conditions \cite{Rosenblatt_66,kemenyfinite} and an algebraic characterisation \cite[Theorem 2]{gurvits} have been studied for (weak) lumpability, which is a similar but different notion from \ref{D1}. As explained before, a DTMC satisfies \ref{D1}(the analog of weak lumpability) if and only if the lumped chain $(f(X_n))_{n\in\N}=(U_n)_{n\in\N}$ is a NHDTMC. 

For DTMCs, it is well-known that the reduced chain is a DTMC with the same transition probabilities independently of the initial condition if and only if the Kemeny-Snell condition holds, e.g., by \cite[Theorem 12]{gurvits}. 
We can strengthen this statement in our setting as we have that \ref{D3} holds for a DTMC $(X_n)_{n\in \N_0}$ if and only if its reduction $(f(X_n))_{n\in\N}=(U_n)_{n\in\N}$ is a NHDTMC for all initial distributions (see Lemma \ref{conn_Markov}). This follows as a corollary from Theorem \ref{main_nec_thm_D3}.
\begin{theorem}\label{DTMC_KS_strengthened}
 Consider a DTMC $(X_n)_{n\in \N_0}$ with stochastic matrix $P$ as a BN with reduction function $f:\mA\to \mB$. 
The following are equivalent:
\begin{enumerate}
\item \ref{D3} holds, i.e., $(f(X_n))_{n\in\N}=(U_n)_{n\in\N}$ is a NHDTMC for every initial distribution where the CPDs are independent of the initial distribution.
\item Condition \eqref{eq:fund1} holds for $P$ (i.e. the Kemeny Snell condition holds for the DTMC).

\end{enumerate}
Hence, the reduction is a DTMC and the CPDs do not depend on $n\in \N$.
\end{theorem}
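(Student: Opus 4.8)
The plan is to obtain this equivalence directly from the two general results for \ref{D3}, Theorem \ref{main_suff_thm} and Theorem \ref{main_nec_thm_D3}, by exploiting the single structural feature of a DTMC viewed as a BN: all of its CPDs coincide with one transition matrix $P$. This is exactly what lets the necessity statement of Theorem \ref{main_nec_thm_D3}, which in general only constrains the edges out of the source nodes, already determine the condition on \emph{all} edges, yielding a genuine equivalence rather than a one-sided implication.

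First I would set up the DTMC as a BN with DAG equal to the path $X_1\to X_2\to \cdots$, so that $V_s=\{X_1\}$, the vertex $X_2$ has depth one, and for every $n\geq 2$ the CPD is $\PP(X_n=a_j\mid X_{n-1}=a_i)=P_{ij}$. Since $pa(X_n)$ is a singleton, I would record that condition \eqref{eq:fund1} at any vertex $X_n$ reads: for all $a_i,a_{i'}$ with $f(a_i)=f(a_{i'})$ and all $b\in\mB$,
$$
\sum_{a_j\in f^{-1}(b)}P_{ij}=\sum_{a_j\in f^{-1}(b)}P_{i'j},
$$
which is precisely the Kemeny--Snell condition for $P$ and, crucially, does not depend on $n$.

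With this in hand, both implications are short. For $(2)\Rightarrow(1)$ I would note that if \eqref{eq:fund1} holds for $P$ then, because every CPD equals $P$, condition \eqref{eq:fund1} holds at every vertex, and Theorem \ref{main_suff_thm} yields \ref{D3}. For $(1)\Rightarrow(2)$ I would invoke Theorem \ref{main_nec_thm_D3}: \ref{D3} forces \eqref{eq:fund1} at every depth-one vertex, and $X_2$ is such a vertex with CPD $P$, so \eqref{eq:fund1} holds for $P$. For the concluding ``Hence'' I would apply the explicit formula of Theorem \ref{main_suff_thm}, namely $\PP_\alpha(U_v=b\mid U_{pa(v)}=f(w))=\PP(U_v=b\mid X_{pa(v)}=w)=\sum_{a_j\in f^{-1}(b)}P_{ij}$; since this value is independent of $\alpha$ and the same matrix $P$ sits on every edge, the reduced transition probabilities are identical at every step, so $(U_n)_{n\in\N}$ is a homogeneous DTMC.

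I do not anticipate a real obstacle: the work is bookkeeping that specialises the general DAG theorems to the path. The only point deserving care is the strengthening flagged by the surrounding text, namely that for DTMCs one gets \emph{time-homogeneity}, not merely the edgewise $\alpha$-invariance that \ref{D3} guarantees for an arbitrary DAG. This is not automatic from \ref{D3} in isolation, but follows here because the per-edge CPD formula from Theorem \ref{main_suff_thm} returns the same stochastic matrix on $\mB$ at every step once $P$ is fixed; checking that this common matrix is well defined (independent of the chosen representative $a_i\in f^{-1}(f(w))$) and stochastic is the only genuinely substantive verification.
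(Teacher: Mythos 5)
Your proposal is correct and follows essentially the same route as the paper: the paper likewise obtains sufficiency from Theorem \ref{main_suff_thm} (every CPD equals $P$, so \eqref{eq:fund1} at one vertex gives it at all vertices) and necessity from Theorem \ref{main_nec_thm_D3} applied at the unique depth-one vertex, with time-homogeneity of the reduced chain read off from the explicit CPD formula in Theorem \ref{main_suff_thm}.
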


While Theorem \ref{char_D1} still applies in the case of DTMCs, it does not give a useful characterisation of DTMCs whose lumped chain is a NHDTMC. We just give an application of Theorem \ref{char_D1} in an example to show
that a lumped chain of a DTMCs is not necessarily a NHDTMC.
\begin{example}
Consider the DTMC $(X_n)_{n\in \N_0}$ with states $\mA=\{a_1,a_2,a_3\}$ started in the initial state $a_1$, with the transition probabilities given by the following transition matrix.
$$
P = 
\begin{pmatrix}
1/2 & 1/4 & 1/4 \\
1/3 & 1/3 & 1/3 \\
0 & 1/2 & 1/2
\end{pmatrix}
$$
Consider the reduction map to $\mB=\{b_1,b_2\}$ given by $f(a_1)=f(a_2)=b_1,f(a_3)=b_2$. Then, by Theorem \ref{char_D1} the following equality must hold:
$$\PP(U_1=b_1,U_2=b_2,U_3=b_1)\cdot \PP(U_1=b_1,U_2=b_1,U_3=b_1,U_4=b_4)$$
$$=\PP(U_1=b_1,U_2=b_1,U_3=b_1)\cdot \PP(U_1=b_1,U_2=b_2,U_3=b_1,U_4=b_4).$$
However, the values of the above probabilities are the following:
$$\PP(U_1=b_1,U_2=b_2,U_3=b_1)= 1/8,\quad \PP(U_1=b_1,U_2=b_1,U_3=b_1,U_4=b_1)=(1/4)^2(50/9)$$
$$=\PP(U_1=b_1,U_2=b_1,U_3=b_1)=5/24 ,\quad \PP(U_1=b_1,U_2=b_2,U_3=b_1,U_4=b_1)=1/12.$$
Hence, the condition of Theorem \ref{char_D1} is not satisfied and $(U_n)_{n\in\N}$ is not a NHDTMC.
\end{example}

We also provide an example of a DTMC whose lumped chain is a NHDTMC but not a DTMC in the Appendix $\S$ \ref{examples_appendix}, example \ref{ex_not_DTMC_NHDTMC}.

\subsection{Other highly symmetric regular BNs}
Consider the special case of BNs where the nontrivial CPDs at each vertex $v\in V_p$ are the same. Such highly symmetric BNs include, e.g., DTMCs as special case, but the DAG structure could 
also be similar to

one of the following DAGs.
\begin{center}
\begin{tikzpicture}
    \node[latent] (theta) {$X_1$} ; %
    \node[latent, right=of theta] (z) {$X_2$} ; %
    \node[latent, below=of theta] (y) {$X_3$} ; %
    \node[latent, right=of y] (mu) {$X_4$} ; %
    \node[latent, below=of y] (sigma) {$X_5$} ; %
        \node[latent, right=of sigma] (sigm2) {$X_6$} ; %
        \node[latent, below=of sigma] (sigm3) {...} ; %
    \edge {theta} {y} ; %
        \edge {z} {y} ; %
    \edge {y} {sigma} ; %
            \edge {mu} {sigma} ; %
            \edge  {sigma}{sigm3} ; %
                              \edge {sigm2} {sigm3} ; %
\end{tikzpicture}
\qquad
\qquad

\begin{tikzpicture}
    \node[latent] (theta) {$X_1$} ; %
       \node[latent,  below=of theta] (t1) {$X_2$} ; %
 \node[latent,   right=of t1] (t2) {$X_4$} ; %
  \node[latent,   below=of t1] (t3) {$X_3$} ; %
   \node[latent,   right=of t3] (t4) {$X_5$} ; %
 \node[latent,   right=of t4] (z) {$X_6$} ; %
        \edge {theta} {t2} ; %
            \edge {t1} {t2} ; %
               \edge {t1} {t4} ; %
                           \edge {t3} {t4} ; %
   \edge {t4} {z} ; %
    \edge {t2} {z} ; %
\end{tikzpicture}
\end{center}
For such BNs, but also regular BNs with any number incoming edges fix the extension of Theorem \ref{DTMC_KS_strengthened} holds as follows.
\begin{corollary}
Consider a random vector $(X_v)_{v\in V}$ with full support as a BN $(\mG,\PP)$, where the DAG has only vertices with zero or $k$ incoming vertices, where $k\geq1$ is a fixed number, with all non-trivial CPDs the same. Assume there is at least one vertex of depth one. Consider a state projection $f:\mA\to \mB$, with $(U_v)_{v\in V}:=(f(X_v))_{v\in V}$ the projected random vector. Then, we have \ref{D3} if and only if \eqref{eq:fund1} holds.
\end{corollary}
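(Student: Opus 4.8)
The plan is to prove the two implications separately, using Theorem~\ref{main_suff_thm} for sufficiency and Theorem~\ref{main_nec_thm_D3} together with the homogeneity of the CPDs for necessity. The decisive structural remark is that, since every vertex of $\mG$ has indegree $0$ or $k$ and all nontrivial CPDs coincide, every vertex $v$ with $pa(v)\neq\emptyset$ carries one and the same CPD $P$ on $k$ parents. Consequently, whether \eqref{eq:fund1} holds at a vertex $v$ depends only on this shared $P$ and not on the position of $v$ in the graph, because
$$\PP(U_v=b\mid X_{pa(v)}=w)=\sum_{a\in f^{-1}(b)}P(a\mid w)$$
is a function of $P$ alone.

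For the direction \eqref{eq:fund1} $\implies$ \ref{D3}, I would simply invoke Theorem~\ref{main_suff_thm}: condition \eqref{eq:fund1} is exactly its hypothesis, so \ref{D3} follows immediately with no further work.

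For the converse \ref{D3} $\implies$ \eqref{eq:fund1}, I would first use the hypothesis that $\mG$ has at least one vertex $v_0$ of depth one; such a $v_0$ satisfies $pa(v_0)\neq\emptyset$, hence has indegree $k$, and all its parents are source nodes. Applying Theorem~\ref{main_nec_thm_D3} to $v_0$ shows that \eqref{eq:fund1} holds at $v_0$. By the structural remark above this is an identity in the shared CPD $P$, so it transfers verbatim to every vertex $v$ with $pa(v)\neq\emptyset$, all of which carry the same $P$; at source vertices $pa(v)=\emptyset$ makes \eqref{eq:fund1} vacuous. Hence \eqref{eq:fund1} holds for all $v\in V$, as desired.

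The only delicate point is the bookkeeping hidden in the phrase \emph{``all nontrivial CPDs the same''}: one must fix, for each vertex with parents, an identification of its $k$ parents with the coordinates of $P$ so that $f$ acts coordinatewise compatibly, ensuring that the constraint $f(w)=f(\tilde w)$ is transported correctly from $v_0$ to an arbitrary $v$. I expect this to be the main (and essentially only) obstacle; once it is settled, no analytic work remains and both directions close immediately.
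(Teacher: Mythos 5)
Your proposal is correct and follows essentially the same route as the paper: sufficiency is immediate from Theorem~\ref{main_suff_thm}, and necessity is obtained by applying Theorem~\ref{main_nec_thm_D3} to a vertex of depth one and then transferring condition \eqref{eq:fund1} to every other vertex via the assumption that all nontrivial CPDs coincide. The paper's own proof is a terse version of exactly this argument (it even glosses over the coordinate-identification bookkeeping you flag), so no further comparison is needed.
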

\begin{proof}
Suppose we show that there is at least a vertex whose incoming edges come directly from initial distributions. In that case we are done by Theorem \ref{main_nec_thm_D3}, as this vertex then has degree one, and as all nontrivial CPDs are the same by assumption. This is clearly the case; hence we are done.

\end{proof}
By the same proof, the same statement still holds if we allow a BN with various kinds of CPDs, say $C_1,\cdots,C_k$, but any CPD we use in the construction of the BN appears at least once with only incoming edges from initial distributions.
\subsection{Non-homogeneous DTMCs}\label{subsec_NHDTMCs}
Non-homogeneous DTMCs (NHDTMCs) are determined by a sequence of potentially different stochastic matrices $(P_n)_{n\in \N_0}$. For NHDTMCs, \ref{D2} is not equivalent to \ref{D3} , e.g., by \cite[Example 1]{DEY2014}.  Sufficient conditions and special cases for necessity for \ref{D2} are studied \cite{DEY2014}. However, Theorem \ref{bad_edge_theorem} shows that in the case of \ref{D2} for NHDTMCs, the difference is actually marginal. Hence, for all $n\geq 2$, the CPDs of the reduced NHDTMC must be independent of the initial distribution. 
More formally concerning \ref{D2}, the following is a consequence of Theorem \ref{bad_edge_theorem}.
\begin{corollary}
Consider a NHDTMC $(X_n)_{n\in \N_0}$ with reduction function $f:\mA\to \mB$, and assume that $(U_n)_{n\in \N_0}:=(f(X_n))_{n\in \N_0}$ is a NHDTMC for all initial distributions (corresponding to \ref{D2}). Then, for any two initial distributions $\alpha_1,\alpha_2$ and any $b,\tilde{b}\in \mB$, and any $n\geq2$ the following CPDs are equal $$\PP_{\alpha_1}(U_n=b|U_{n-1}=\tilde{b})=\PP_{\alpha_2}(U_n=b|U_{n-1}=\tilde{b}).$$
\end{corollary}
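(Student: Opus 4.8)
The plan is to read this corollary off directly from Theorem~\ref{bad_edge_theorem}, with almost all of the work being a translation of NHDTMC language into the BN framework of the paper. First I would fix the underlying DAG: a NHDTMC $(X_n)_{n\in\N_0}$ corresponds, as a BN, to the directed path $X_0\to X_1\to X_2\to\cdots$ whose unique source node is $X_0$. In this DAG each vertex $X_n$ has the single parent $pa(X_n)=\{X_{n-1}\}$, and $depth(X_n)=n$, since the longest (indeed only) directed path from the source $X_0$ to $X_n$ has exactly $n$ edges. Consequently the restriction $n\geq 2$ in the statement is precisely the requirement $depth(X_n)>1$ that appears in the definition of a bad vertex.

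Next I would identify the hypothesis of the corollary with \ref{D2}. As recalled in $\S$\ref{prev_appr}, a (NH)DTMC viewed as a BN satisfies \ref{D1} exactly when its lumped process is a NHDTMC; asking that the lumped process be a NHDTMC for \emph{every} initial distribution is therefore exactly the statement that \ref{D2} holds for the pair $(\mB\mN,f)$. This is precisely what the corollary assumes, so I may invoke Theorem~\ref{bad_edge_theorem}, whose conclusion is that $\mG$ has no bad vertices for $(X_v)_{v\in V}$ and $f$.

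The conclusion is then the contrapositive of the bad-vertex condition. I would fix $n\geq 2$ and argue by contradiction: suppose there are initial distributions $\alpha_1,\alpha_2$ and states $b,\tilde b\in\mB$ with both $\PP_{\alpha_1}(U_{n-1}=\tilde b)>0$ and $\PP_{\alpha_2}(U_{n-1}=\tilde b)>0$, yet
$$
\PP_{\alpha_1}(U_n=b|U_{n-1}=\tilde b)\neq\PP_{\alpha_2}(U_n=b|U_{n-1}=\tilde b).
$$
Taking $v=X_n$ (which has $depth(v)=n>1$), $\mu=\alpha_1$, $\nu=\alpha_2$, $b_v=b$, and $b_{pa(v)}=\tilde b$ (a legitimate choice since $pa(X_n)=\{X_{n-1}\}$ is a singleton), this display and the two positivity conditions are exactly the defining properties of a bad vertex, contradicting Theorem~\ref{bad_edge_theorem}. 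Hence the two CPDs coincide for all $n\geq 2$.

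I expect no genuine obstacle: the entire mathematical content is supplied by Theorem~\ref{bad_edge_theorem}, and the only care needed is the bookkeeping that $depth(X_n)=n$ so that $n\geq 2$ matches $depth>1$, together with the full-support assumption, which guarantees that the relevant CPDs are well-defined and that the positivity requirements in the bad-vertex definition can indeed be met. The one mild point worth spelling out is the edge case: if for a given initial distribution the conditioning event $\{U_{n-1}=\tilde b\}$ has probability zero, then the corresponding CPD is undefined and the asserted equality is understood only where both sides make sense, which is exactly the regime isolated by the positivity hypotheses in the definition of a bad vertex.
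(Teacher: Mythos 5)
Your proposal is correct and matches the paper's intent exactly: the paper states this corollary as an immediate consequence of Theorem~\ref{bad_edge_theorem} without further proof, and your argument simply spells out the translation (path DAG, $depth(X_n)=n$ so $n\geq 2$ is the bad-vertex depth condition, contrapositive of the no-bad-vertices conclusion). The bookkeeping about positivity of the conditioning events is handled appropriately and consistently with the paper's conventions.
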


We next outline the relations of some of our results to ibid.

 Corollary \ref{main_suff_thm} generalises the classical Kemeny-Snell condition \cite[Theorem 6.3.2]{kemenyfinite} as well as a sufficiency result for NHDTMCs \cite[Theorem 1]{DEY2014} for \ref{D3}. Theorem \ref{main_suff_nec_thmD3} generalises the previous and offers a characterisation of \ref{D3}.
Theorem \ref{nec_D1} gives a necessary condition on the CPDs generalising \cite[Theorem 3 (ii)]{DEY2014} for \ref{D1}. On the other hand, Theorem \ref{char_D1} characterises \ref{D1} for NHDTMCs but is not easy to check in practice.
Theorem \ref{thm_suff_D2_2}, assuming a nonzero pattern of the CPDs, and Theorem \ref{suff_thm_D2} generalise \cite[Theorem 3 (i)]{DEY2014} resp. \cite[Theorem 3 (iii)]{DEY2014}.

\section{Discussion}\label{sec_disc}
We studied a dimensionality reduction for discrete BNs that preserves conditional independence (CI) statements. Our findings encompass easily verifiable sufficient conditions for
 \ref{D1}, \ref{D2}, and \ref{D3} but also characterisations of the prior. We also established connections between our results and well-known models such as DTMCs, NHDTMCs, and other highly symmetric BNs.

It is easy to see that our results
extend to the situation of BNs $\tilde{\mB\mN}=(\mG,\tilde{\PP})$ with random vector $(\tilde{X}_v)_{v\in V}$ where each $\tilde{X}_v$ has a finite numbers of different states $\mA_v$ for vertices $v\in V$ under general reduction functions for
each vertex $v$. In this setting, $f_v:\mA^v\to\mB^v$ maps a different set of states $\mA^v$ with a different surjective maps for $v\in V$. One can tackle that situation by
defining a BN $\mB\mN=(\mG,\PP)$ with random vector $(X_v)_{v\in V}$ with state space $\mA:=\dot{\bigcup}_{v\in V}\mA_v$ and reduced space $\mB:=\dot{\bigcup}_{v\in V}\mB_v$ with obvious reduction function $f:\mA\to \mB$.
Therefore \ref{D1} holds for $(\mB\mN,f)$ if and only if it holds for $(\tilde{\mB\mN},(f_v)_{v\in V})$, whereas if \ref{D2}/\ref{D3} holds for $(\mB\mN,f)$, \ref{D2}/\ref{D3} also holds for $(\tilde{\mB\mN},(f_v)_{v\in V})$.

However, several questions and avenues for further exploration stay open. While we have focused on discrete BNs, the study of reductions in continuous state spaces for directed graphical models remains underdeveloped, even in the case of DTMCs \cite{Rosenblatt_66}. Then, there is no similar analytical definition and characterisation for lumpings of undirected graphical models, though there are both approximate \cite{Wainwright06} or exact \cite{order_red} approaches to other reductions. Additionally, practical considerations such as implementation of reduction checks, algorithms to identify reductions, and optimal reduction strategies have not been addressed.

Our characterisation of \ref{D1} via Theorem \ref{char_D1} enables a simple check that can be implemented, and so does our characterisation of \ref{D3} via Theorem \ref{main_suff_nec_thmD3}. Such a characterisation for \ref{D2} is missing.

As discussed in $\S$ \ref{sec_ex_cases}, it would be interesting to have a practical condition on the lumped chain of a DTMC is a NHDTMC.

Given that the space of BNs amenable to reductions is often smaller than the space of BNs factorizing with a directed acyclic graph (DAG), it could be valuable to develop a framework to select approximate reductions, i.e. as in \cite{Weinan_cx}. 

Another promising direction is to approach reductions from the perspective of algebraic statistics \cite{drton2009} and study e.g., the dimensions of the corresponding notions for the reductions (cf., e.g., example \ref{ex_abstract}), and characterise when all distributions of a BN can be reduced without losing the factorisation property (i.e. in the sense of Theorem \ref{many_non_D1s}).

\appendix
\section{Proofs}
\subsection{Auxiliary remarks for BNs}\label{proof_princ}
The fact that we only have to check CPDs on nonzero events is repeatedly used in proofs and restated for convenience (see  \cite[definition 2.3 and 2.4]{koller2009probabilistic} and \cite[Theorem 3.27]{lauritzen1996} resp. Theorem \ref{equivalence_local_MP}).
\begin{lemma}\label{proof_princ_lemma} A random vector $(U_v)_{v\in V}$ factorises w.r.t. some DAG $\mG$ (in the sense of \eqref{factorisation_BN}) if and only if for all $v\in V$ and all $b_{nd^*(v)}\in B^{nd^*(v)}$ with
 $\PP(U_{nd(v)}=b_{nd(v)})>0$, the following holds:
\begin{equation}\label{BASIC_EQU}
\PP(U_{v}=b_{v}|U_{nd(v)}=b_{nd(v)})=\PP(U_v=b_v|U_{pa(v)}=b_{pa(v)})
\end{equation}
\end{lemma}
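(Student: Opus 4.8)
The plan is to recognise the family of identities \eqref{BASIC_EQU}, ranging over all $v$ and all conditioning events with $\PP(U_{nd(v)}=b_{nd(v)})>0$, as precisely the conditional-probability form of the local Markov property, and then to invoke Theorem \ref{equivalence_local_MP}. First I would record two elementary facts. Since parents are non-descendants, $pa(v)\subseteq nd(v)$, so conditioning on $U_{nd(v)}$ is the same as conditioning jointly on $U_{pa(v)}$ and on $U_{nd(v)\setminus pa(v)}$; and marginalising shows that $\PP(U_{nd(v)}=b_{nd(v)})>0$ already forces $\PP(U_{pa(v)}=b_{pa(v)})>0$, so the right-hand side of \eqref{BASIC_EQU} is well defined exactly when the left-hand side is. Thus \eqref{BASIC_EQU} is a genuine equality of conditional probabilities indexed by the values $b_v$, $b_{nd(v)\setminus pa(v)}$ and $b_{pa(v)}$.

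The core step is the equivalence, for discrete random vectors, between the conditional-independence statement $U_v \perp\!\!\!\perp U_{nd(v)\setminus pa(v)}\,|\,U_{pa(v)}$ and the collection of identities \eqref{BASIC_EQU} over positive-probability conditioning events. I would establish this by writing conditional independence in its symmetric product form $\PP(U_v,U_{nd(v)\setminus pa(v)},U_{pa(v)})\,\PP(U_{pa(v)})=\PP(U_v,U_{pa(v)})\,\PP(U_{nd(v)\setminus pa(v)},U_{pa(v)})$ (evaluated at the relevant states) and splitting into two regimes: when the conditioning event has zero probability both sides vanish and the constraint is vacuous, while on a positive-probability event dividing through returns exactly \eqref{BASIC_EQU}. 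This shows that the hypothesis $\PP(U_{nd(v)}=b_{nd(v)})>0$ discards only the vacuous cases, so the restricted family of identities carries the same information as the full conditional-independence statement.

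Granting this, the lemma is immediate: \eqref{BASIC_EQU} holding at every vertex over all positive-probability events is equivalent, by the previous step, to the local Markov property holding at every vertex, which by Theorem \ref{equivalence_local_MP} is equivalent to factorisation of $(U_v)_{v\in V}$ over $\mG$ in the sense of \eqref{factorisation_BN}. I expect the only delicate point to be the bookkeeping of the zero-probability conditioning events, i.e.\ checking that the vacuous cases of the product-form definition align precisely with the events excluded by the hypothesis, so that no genuine constraint is silently dropped. If one prefers a self-contained proof not citing Theorem \ref{equivalence_local_MP}, the same conclusion follows directly: fix a topological ordering of $V$, expand $\PP(U_V=b_V)$ by the chain rule along it, and use \eqref{BASIC_EQU} together with a marginalisation identity — averaging over the not-yet-conditioned non-descendants with the parent configuration fixed and hence factored out — to replace each conditioning on predecessors by conditioning on parents alone, yielding the factorisation.
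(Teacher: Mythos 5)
Your proof is correct and follows exactly the route the paper intends: it identifies \eqref{BASIC_EQU} (over positive-probability conditioning events) with the local Markov property $U_v \perp\!\!\!\perp U_{nd(v)\setminus pa(v)}\,|\,U_{pa(v)}$ and invokes Theorem \ref{equivalence_local_MP}, which is precisely the justification the paper gives by citation when stating this lemma. Your extra care with the zero-probability bookkeeping is the right delicate point to address and is handled correctly.
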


Similarly the following holds.
\begin{corollary}\label{proof_princ_cor} Let the DAG $\mG$ be such that all in- and outdegrees are either one or zero. Then, a random vector $(U_v)_{v\in V}$ factorises w.r.t. $\mG$ (in the sense of  \eqref{factorisation_BN}) if and only if for all $v\in V$ and all $b_{pr*(v)}\in B^{pr*(v)}$ with
 $\PP(U_{pr(v)}=b_{pr(v)})>0$, the following holds:
\begin{equation}\label{BASIC_EQU_s}
\PP(U_{v}=b_{v}|U_{pr(v)}=b_{pr(v)})=\PP(U_v=b_v|U_{pa(v)}=b_{pa(v)})
\end{equation}
\end{corollary}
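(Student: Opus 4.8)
The plan is to deduce both directions from Lemma \ref{proof_princ_lemma} by checking that, under the degree hypothesis, the conditioning set $nd(v)$ in \eqref{BASIC_EQU} may be replaced by the smaller set $pr(v)$. First I would record the structural fact that a finite DAG in which every in- and out-degree lies in $\{0,1\}$ is a vertex-disjoint union of directed paths. Consequently, for a vertex $v$ on such a path the strict descendants $de(v)$ are exactly the vertices that follow $v$ along its path, so that
\begin{equation*}
nd(v)=V\setminus(\{v\}\cup de(v))=pr(v)\cup O(v),
\end{equation*}
a disjoint union, where $O(v)$ collects the vertices of all the \emph{other} connected components. In particular, for a connected $\mG$ (a single directed path) one has $O(v)=\emptyset$ and hence $nd^*(v)=pr^*(v)$ for every $v$.

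For the direction factorisation $\Rightarrow$ \eqref{BASIC_EQU_s}, I would invoke the local Markov property from Theorem \ref{equivalence_local_MP}, which gives $U_v\perp\!\!\!\perp U_{nd(v)\setminus pa(v)}\mid U_{pa(v)}$. Since $pa(v)\subseteq pr(v)\subseteq nd(v)$, the set $pr(v)\setminus pa(v)$ is contained in $nd(v)\setminus pa(v)$, so the decomposition property of conditional independence yields $U_v\perp\!\!\!\perp U_{pr(v)\setminus pa(v)}\mid U_{pa(v)}$; rewriting this as an equality of conditional probabilities is exactly \eqref{BASIC_EQU_s}. This step is routine and uses nothing about the degrees beyond $pa(v)\subseteq pr(v)$.

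The substantive direction is \eqref{BASIC_EQU_s} $\Rightarrow$ factorisation, where I would again appeal to Lemma \ref{proof_princ_lemma}: it is enough to upgrade \eqref{BASIC_EQU_s} to \eqref{BASIC_EQU}, i.e. to verify that replacing the conditioning set $pr(v)$ by $nd(v)=pr(v)\cup O(v)$ does not change the conditional law of $U_v$. For connected $\mG$ this is immediate, since $O(v)=\emptyset$ and the two conditions coincide verbatim; this is precisely the regime of the DTMC/NHDTMC applications in which the corollary is invoked. The main obstacle is the contribution of $O(v)$ when $\mG$ splits into several paths: dropping it amounts to the cross-component conditional independence $U_v\perp\!\!\!\perp U_{O(v)}\mid U_{pr(v)}$, which the predecessor conditions do not supply on their own. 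In the setting where the corollary is actually used, however, $(U_v)_{v\in V}=(f(X_v))_{v\in V}$ is the reduction of a BN $(X_v)_{v\in V}$ factorising over $\mG$, and there the independence is free: the subvectors of $(X_v)_{v\in V}$ indexed by distinct path-components are independent, and applying $f$ coordinatewise preserves that independence, so $U_{O(v)}$ is independent of $U_{pr^*(v)}$ and drops out of the conditioning. Once the $O(v)$-terms are removed, \eqref{BASIC_EQU} holds for every $v\in V$ and Lemma \ref{proof_princ_lemma} closes the argument, giving that $(U_v)_{v\in V}$ factorises w.r.t. $\mG$.
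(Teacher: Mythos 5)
Your derivation is essentially the intended one: the paper offers no separate proof of this corollary (it is introduced with ``Similarly the following holds'' after Lemma \ref{proof_princ_lemma}), and the natural route is exactly what you do — observe that the degree hypothesis forces $\mG$ to be a disjoint union of directed paths, so that $nd(v)=pr(v)\cup O(v)$ with $O(v)$ the vertices of the other components, and then transfer \eqref{BASIC_EQU} to \eqref{BASIC_EQU_s}. Your forward direction (local Markov property plus decomposition, using only $pa(v)\subseteq pr(v)\subseteq nd(v)$) is correct, and your reverse direction is correct verbatim in the connected case, which is the one relevant to the DTMC/NHDTMC pictures.

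The point you raise about disconnected $\mG$ is not a defect of your proof but a genuine caveat about the statement itself, and you are right to flag it: for an edgeless DAG on two vertices, condition \eqref{BASIC_EQU_s} is vacuous (every $pr(v)$ is empty) while factorisation demands $U_{v_1}\perp\!\!\!\perp U_{v_2}$, so the ``if'' direction fails for an arbitrary random vector. Your repair — that in the paper's only application (Theorem \ref{thm_suff_D2_2}) the vector is $(f(X_v))_{v\in V}$ for a BN $(X_v)_{v\in V}$ factorising over $\mG$, whence the path-components of $(X_v)_{v\in V}$ are independent, this independence survives the coordinatewise map $f$, and $U_{O(v)}$ can be dropped from the conditioning — is sound: from $U_{pr^*(v)}\perp\!\!\!\perp U_{O(v)}$ one gets $\PP(U_v=b_v\mid U_{pr(v)}=b_{pr(v)},U_{O(v)}=b_{O(v)})=\PP(U_v=b_v\mid U_{pr(v)}=b_{pr(v)})$ directly by cancelling $\PP(U_{O(v)}=b_{O(v)})$ in numerator and denominator. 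The cleanest fix to the corollary as a standalone statement would be to either add ``connected'' to the hypothesis or to add the cross-component independence of $(U_v)_{v\in V}$ as an explicit assumption; as used in the paper, no harm is done.
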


We collect some observations on d-separation.
\begin{lemma}\label{lem_d-sep}
Consider a DAG $\mG$ with a vertex $v$. We have\begin{itemize}
\item $pa(v)$ d-separates $v$ and $pa^2(v)$.
\item $pa(v)$ d-separates $v$ and $nd(v)\setminus pa(v)$.
\item $pa(v)$ d-separates $v$ and $nd(v)\setminus pr(v)$.
\end{itemize}
\end{lemma}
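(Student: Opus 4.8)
The plan is to reduce all three items to a single central separation statement — that $pa(v)$ d-separates $v$ from $nd(v)\setminus pa(v)$ — and to prove that one by a direct analysis of undirected paths; the other two then follow by elementary inclusions. Note that this is a purely combinatorial assertion about $\mG$ and does not invoke any probability, so I would not use Theorem \ref{equivalence_local_MP} but argue from the d-separation definition directly. First I would record the acyclicity facts I will lean on: since $\mG$ has no directed cycles, an ancestor of $v$ is never a descendant, so $pr(v)\subseteq nd(v)$, and in particular $pa(v)\subseteq pr(v)\subseteq nd(v)$ and $pa^2(v)\subseteq pr(v)\subseteq nd(v)$. Moreover $de^*(v)\cap pa(v)=\emptyset$, because a descendant of $v$ (or $v$ itself) lying in $pa(v)$ would close a directed cycle; and since $de^*(x)\subseteq de^*(v)$ for every $x\in de(v)$, one gets $de^*(x)\cap pa(v)=\emptyset$ throughout $de(v)$.

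For the central statement I would fix $w\in nd(v)\setminus pa(v)$ and an arbitrary undirected path $\pi$ from $v$ to $w$, and exhibit a blocking vertex in the sense of the d-separation definition, splitting on the orientation of the edge of $\pi$ incident to $v$. If that edge points into $v$, its other endpoint $u_1$ lies in $pa(v)=S$; because the edge is directed out of $u_1$ into $v$, the two path-edges at $u_1$ do not meet head-to-head, so $u_1$ is a non-collider lying in $S$ and blocks $\pi$ (here $u_1\neq w$ since $w\notin pa(v)$, so $u_1$ is interior). If instead the edge points out of $v$, I would follow the maximal initial run of forward edges $v\to u_1\to\cdots\to u_j$; all the $u_i$ are descendants of $v$, and since $w\notin de(v)$ this run must terminate at an interior vertex $u_j\neq w$ whose next path-edge points back into it, giving a collider of the form $\cdot\to u_j\leftarrow\cdot$. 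Since $u_j\in de(v)$, the acyclicity fact gives $de^*(u_j)\cap pa(v)=\emptyset$, i.e. neither $u_j$ nor any descendant of $u_j$ lies in $S$, so this collider is inactive and blocks $\pi$. As $\pi$ was arbitrary, $pa(v)$ d-separates $v$ from $nd(v)\setminus pa(v)$, which is the second bullet.

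To deduce the remaining two items I would use monotonicity of d-separation: every undirected path from $v$ to a vertex of $B'$ is in particular a path from $v$ to a vertex of $B$ whenever $B'\subseteq B$, so a blocking set for $B$ also blocks $B'$, provided $B'$ stays disjoint from $S$. For the third bullet, $pa(v)\subseteq pr(v)$ yields $nd(v)\setminus pr(v)\subseteq nd(v)\setminus pa(v)$, and $nd(v)\setminus pr(v)$ is disjoint from $pa(v)$, so the claim is immediate from the central statement. For the first bullet, $pa^2(v)\subseteq nd(v)$ by the acyclicity facts; after discarding the vertices of $pa^2(v)\cap pa(v)$, which lie in the conditioning set $S$ and are therefore trivially separated, the remaining set $pa^2(v)\setminus pa(v)$ sits inside $nd(v)\setminus pa(v)$ and is again covered by the central statement.

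The main obstacle is the forward-run argument in the second case of the central statement: I must be sure the first reversal (the collider $u_j$) genuinely occurs at an interior vertex strictly before $w$, which is exactly where acyclicity and $w\in nd(v)$ enter, and that acyclicity simultaneously forbids any descendant of $u_j$ from landing in $pa(v)$, since that inactivity is what makes the collider block the path. A secondary, purely bookkeeping subtlety is the disjointness demanded by the d-separation definition in the first bullet, which I resolve by splitting off $pa^2(v)\cap pa(v)$ as above; everything else is routine.
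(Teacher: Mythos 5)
Your proof is correct. The paper does not actually prove Lemma \ref{lem_d-sep}: it is stated in the appendix as a collection of ``observations on d-separation'' with no argument supplied, so there is nothing to compare against except the implicit expectation that this is standard. Your write-up is a sound first-principles verification of exactly the right shape: the reduction of all three bullets to the single statement that $pa(v)$ d-separates $v$ from $nd(v)\setminus pa(v)$ is legitimate (monotonicity of blocking under shrinking the target set, plus $pa(v)\subseteq pr(v)\subseteq nd(v)$ and $pa^2(v)\subseteq pr(v)$ from acyclicity), and the case split on the orientation of the first edge of the path is the standard argument. In the incoming case the blocking vertex $u_1\in pa(v)$ is a non-collider and, as you note, necessarily interior since $w\notin pa(v)$; in the outgoing case the first reversal of the maximal forward run is a collider lying in $de(v)$, and acyclicity gives $de^*(u_j)\cap pa(v)=\emptyset$, so the collider is inactive. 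Your handling of the disjointness requirement in the first bullet (passing to $pa^2(v)\setminus pa(v)$) is the right reading of the statement, since the paper's definition of d-separation is only formulated for disjoint triples. No gaps.
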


Furthermore, we will also repeatedly use that the following trivial equalities holds by definition of the conditional probability
\begin{lemma}Let $A,B, C$ be events. Then, if $\PP(A| B\cap C)>0$,
\begin{itemize}\label{elementary_lem}
\item
$
\PP(A\cap B\cap C)=\PP(A| B\cap C)\cdot \PP(B|C)\cdot \PP(C).
$
\item $
\PP(A\cap B| C)=\PP(A| B\cap C)\cdot \PP(B|C).
$
\end{itemize}
\end{lemma}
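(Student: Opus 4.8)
The plan is to deduce both identities directly from the definition of conditional probability, the only subtlety being to confirm that every conditioning event appearing has positive probability. First I would observe that the hypothesis $\PP(A| B\cap C)>0$ already guarantees $\PP(B\cap C)>0$ (otherwise the left-hand conditional would be undefined), and since $B\cap C\subseteq C$ monotonicity of $\PP$ gives $\PP(C)>0$ as well. Thus all three conditional probabilities appearing in the statement are well-defined, and no further positivity assumptions are needed.

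For the first identity, I would expand the right-hand side using $\PP(A| B\cap C)=\PP(A\cap B\cap C)/\PP(B\cap C)$ together with $\PP(B| C)=\PP(B\cap C)/\PP(C)$, so that the product $\PP(A| B\cap C)\cdot\PP(B| C)\cdot\PP(C)$ telescopes: the factor $\PP(B\cap C)$ cancels between the first two terms and $\PP(C)$ cancels against the trailing factor, leaving precisely $\PP(A\cap B\cap C)$.

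For the second identity, I would divide the first identity by $\PP(C)$, which is legitimate since $\PP(C)>0$, and then read off the left-hand side $\PP(A\cap B\cap C)/\PP(C)$ as $\PP(A\cap B| C)$ by the definition of conditional probability. I do not anticipate any genuine obstacle here: the statement is the elementary chain rule for conditional probabilities, and the entire content of the argument is the cancellation above together with the bookkeeping on positivity, which the single hypothesis $\PP(A| B\cap C)>0$ already supplies.
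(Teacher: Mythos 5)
Your proof is correct and matches the paper's intent exactly: the paper states this lemma without proof as a "trivial equality by definition of the conditional probability," and your telescoping expansion of the definitions, together with the observation that $\PP(A\mid B\cap C)>0$ forces $\PP(B\cap C)>0$ and hence $\PP(C)>0$ by monotonicity, is precisely the intended argument.
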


\subsection{Proof Theorem \ref{many_non_D1s}}\label{proof_many_non_D1s}
We start with some preparatory observations, and then give the proof.

The following Lemma holds by definition with notions introduced in $\S$ \ref{General_red_sect}.
\begin{lemma}\label{lem_Markov_eq}
Consider two connected DAGs $\mG_1$, $\mG_2$ that are Markov equivalent and a surjective reduction function $f:\mA\to \mB$ with $|\mA |>|\mB |>1$ and $\hat{f}(\cdot)$ the image measure. Then
\begin{itemize}
\item $im(\mG_1,\mA)=im(\mG_2,\mA)$ and $im(\mG_1,\mB)=im(\mG_2,\mB)$
\item $\hat{f}(im(\mG_1,\mA))=\hat{f}(im(\mG_2,\mA))$
\end{itemize}
\end{lemma}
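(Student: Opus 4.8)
The plan is to reduce everything to the observation that $im(\mG,\mA)$ is, by definition, nothing but the set of probability distributions on $\mA^V$ that factorise over $\mG$ in the sense of \eqref{factorisation_BN}, i.e. the image $\theta(\Theta)$ of the parametrisation map. The first bullet then asserts that this set depends only on the Markov equivalence class of $\mG$, and the second bullet is a purely formal consequence of the first.

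First I would unpack $im(\mG_1,\mA)$ and $im(\mG_2,\mA)$ as sets of distributions. By Theorem \ref{equivalence_local_MP}, a distribution on $\mA^V$ lies in $im(\mG_i,\mA)$ if and only if it satisfies the global Markov property with respect to $\mG_i$, i.e. if and only if it respects every d-separation statement of $\mG_i$. Markov equivalence of $\mG_1$ and $\mG_2$ — equivalently, by the characterisation through shared skeleton and immoralities \cite[Theorem 3.8]{koller2009probabilistic}, \cite{frydenberg} — means precisely that the two DAGs induce the same collection of d-separation statements. Hence a distribution satisfies the global Markov property of $\mG_1$ exactly when it satisfies that of $\mG_2$, so the two families of factorising distributions coincide and $im(\mG_1,\mA)=im(\mG_2,\mA)$. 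Crucially, this argument never refers to the cardinality of the state space, so running it verbatim with $\mB$ in place of $\mA$ yields $im(\mG_1,\mB)=im(\mG_2,\mB)$, which completes the first bullet. For the second bullet I would simply apply the image measure map $\hat{f}(\cdot)$ to the set equality already established: since $im(\mG_1,\mA)$ and $im(\mG_2,\mA)$ are literally the same set, their images under any fixed map agree, giving $\hat{f}(im(\mG_1,\mA))=\hat{f}(im(\mG_2,\mA))$.

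There is essentially no hard step here; as the surrounding text indicates, the content is bookkeeping of the definitions from $\S$\ref{General_red_sect}. The one point worth flagging is that Markov equivalence is a purely graphical, CI-level notion, so the coincidence of the factorising families holds uniformly across state spaces — this is exactly what lets the same argument serve for both $\mA$ and $\mB$. The only subtlety to handle carefully is to read $im(\mG_i,\mA)$ as the \emph{image} of the parametrisation map (a set of distributions) rather than as the parametrisation map itself, since Markov-equivalent DAGs generically carry different parametrisation maps even though their images agree, as already noted in the text.
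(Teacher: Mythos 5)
Your proof is correct and follows the same reasoning the paper relies on: the paper gives no explicit proof of Lemma \ref{lem_Markov_eq} (it is stated as holding ``by definition,'' having already recalled in $\S 2$ that Markov-equivalent DAGs satisfy $im(\mG_1)=im(\mG_2)$), and your unpacking via Theorem \ref{equivalence_local_MP} and the coincidence of d-separation statements is exactly the intended justification. The observation that the argument is state-space-agnostic (so it covers both $\mA$ and $\mB$) and that the second bullet is just $\hat{f}$ applied to equal sets is precisely the right level of detail.
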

The next lemma concerns the random variables and image measures following consecutive projections.
\begin{lemma}\label{lem_commute}
Consider a discrete random vector $(X_v)_{v\in V}$ with state space in $\mA^V$, $z\in \Delta^{\prod_{v\in V}\mA}$ the corresponding probability distribution, a surjective reduction function $f:\mA\to \mB$, and $W\subseteq V$ a subset. Denote by $f(\cdot)$ the coordinate-wise map with $f$, $\pi_W(\cdot)$ when we project from $\mA^V\to \mA^W$, and correspondingly $\hat{f}(\cdot),\hat{\pi_W}(\cdot)$ their image measure maps.
Then the following holds:
\begin{enumerate}
\item $\pi_W(f((X_v)_{v\in V}))=\pi_W((f(X_v)_{v\in V}))=f(\pi_W(X_v)_{v\in V}))=f((X_w)_{w\in W})$.
\item $\hat{f}(\hat{\pi_W}(z))=\hat{\pi_W}(\hat{f}(z))$, and this is the probability distribution of $f((X_w)_{w\in W})$.
\end{enumerate}
\end{lemma}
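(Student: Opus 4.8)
The plan is to treat part (1) as a purely pointwise (set-theoretic) statement and then obtain part (2) essentially for free from the functoriality of the image measure, together with the identification of marginals. The key observation driving both parts is that the coordinate-wise reduction $f(\cdot)$ and the coordinate projection $\pi_W(\cdot)$ each act independently on each coordinate, so the two operations commute; everything else is bookkeeping.

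For part (1), first I would fix an arbitrary point $a=(a_v)_{v\in V}\in\mA^V$ and trace both composites. Going one way, $\pi_W(f(a))=\pi_W\big((f(a_v))_{v\in V}\big)=(f(a_w))_{w\in W}$, and going the other way, $f(\pi_W(a))=f\big((a_w)_{w\in W}\big)=(f(a_w))_{w\in W}$; the two agree because applying $f$ in coordinate $w$ depends only on the $w$-th entry, which is unaffected by whether we restrict to $W$ before or after. Since this identity of maps $\mA^V\to\mB^W$ holds at every point, it holds in particular after evaluating at $(X_v)_{v\in V}$, which yields the claimed equality of random vectors. The remaining equalities in the displayed chain are then just the definitions $\pi_W((f(X_v))_{v\in V})=(f(X_w))_{w\in W}$ and $\pi_W((X_v)_{v\in V})=(X_w)_{w\in W}$.

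For part (2), I would invoke the standard functoriality of the image measure (cf.\ the introduction of $\hat f$ in $\S$\ref{General_red_sect}): for composable maps $g,h$ one has $\widehat{g\circ h}=\hat g\circ\hat h$. Part (1) says precisely that $f\circ\pi_W=\pi_W\circ f$ as maps $\mA^V\to\mB^W$, so taking image measures gives
\[
\hat f\circ\hat{\pi_W}=\widehat{f\circ\pi_W}=\widehat{\pi_W\circ f}=\hat{\pi_W}\circ\hat f,
\]
and evaluating at $z$ produces $\hat f(\hat{\pi_W}(z))=\hat{\pi_W}(\hat f(z))$. To finish, I would identify this common measure: since $z$ is the law of $(X_v)_{v\in V}$, the marginal $\hat{\pi_W}(z)$ is the law of $(X_w)_{w\in W}$, and pushing forward by $\hat f$ gives the law of $f((X_w)_{w\in W})$, as claimed.

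There is no substantial obstacle here; this is a functoriality/bookkeeping lemma rather than a result with a genuine difficulty. The only point requiring care is keeping the two instances of $f$ and of $\pi_W$ on their correct domains and codomains (the $\mA$-valued versus $\mB$-valued product spaces, indexed by $V$ versus $W$), and citing the correct form of the pushforward-of-a-composition identity, so that the manipulation of image measures in part (2) is rigorous rather than merely formal.
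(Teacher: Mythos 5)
Your proof is correct and follows essentially the same route as the paper: part (1) is the pointwise commutativity of $f$ and $\pi_W$, and part (2) is the observation that this commutativity passes to image measures (the paper phrases this as the preimage identity $\pi_W^{-1}(f^{-1}(b_W))=f^{-1}(\pi_W^{-1}(b_W))$ checked on singletons, which is exactly the discrete incarnation of your functoriality argument $\widehat{g\circ h}=\hat g\circ\hat h$). No gaps.
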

\begin{proof}
(1) Follows by definition.\\
(2) It is enough to see that for $b_W\in \mB^W$, the following equivalence between sets holds
$$
\pi_W^{-1}(f^{-1}(b_W))=f^{-1}(\pi_W^{-1}(b_W))
$$
and that then $\PP(f((X_w)_{w\in W})=b_W)=\PP((X_v)_{v\in V}\in \pi_W^{-1}(f^{-1}(b_W)))$.
\end{proof}

\begin{lemma}\label{lem_marginal}
Consider a BN $\mB\mN=(\mG,\PP)$ with random vector $(X_v)_{v\in V}$. Let $W\subseteq V$ be a subset such that the vertex-induced subgraph $\mG_W$
has three vertices, has one of the following two forms
\begin{center}
\begin{tikzpicture}
$\mG_1 = $
    \node[latent] (theta) {$v_1$} ; %
    \node[latent, right=of theta] (z) {$v_2$} ; %
    \node[latent, right=of z] (y) {$v_3$} ; %
  
    \edge {theta} {z} ; %
      
    \edge {z} {y} ; %
\end{tikzpicture}

\begin{tikzpicture}
$\mG_2 = $
    \node[latent] (theta) {$v_2$} ; %
    \node[latent, right=of theta] (z) {$v_1$} ; %
    \node[latent, above=of theta] (y) {$v_3$} ; %
  
    \edge {theta} {z} ; %
      
    \edge {theta} {y} ; %
\end{tikzpicture}
\end{center}
and the vertex that is connected to the other two in $\mG_W$ has sum of in- and outdegrees in $\mG$ of two. Then, the following holds:
 \begin{enumerate}
 \item Both $(X_v)_{v\in V}$ and $(X_w)_{w\in W}$ satisfy the CI-condition $X_{v_3} \perp\!\!\!\perp X_{v_1}|X_{v_2}$.
 \item $im(\mG_W,\mA)\subseteq \hat{\pi_W}(im(\mG,\mA))$
 \end{enumerate}
\end{lemma}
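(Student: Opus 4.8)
The plan is to handle the two admissible shapes of $\mG_W$ in parallel, namely the chain $\mG_1\colon v_1\to v_2\to v_3$ and the fork $\mG_2\colon v_1\leftarrow v_2\to v_3$, since in both cases $v_2$ is the vertex adjacent to the other two and the degree hypothesis forces its only neighbours in all of $\mG$ to be $v_1$ and $v_3$. For part (1), I would first note that the asserted statement $X_{v_3}\perp\!\!\!\perp X_{v_1}\mid X_{v_2}$ depends only on the joint law of the triple $(X_{v_1},X_{v_2},X_{v_3})$; since $v_1,v_2,v_3\in W$, this law is simultaneously a marginal of $(X_v)_{v\in V}$ and the distribution of $(X_w)_{w\in W}$ (Lemma \ref{lem_commute}), so the two assertions in (1) are equivalent and it suffices to prove the CI for the full vector. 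For that I would invoke the global Markov property (Theorem \ref{equivalence_local_MP}) and reduce the claim to the purely graph-theoretic fact that $\{v_2\}$ d-separates $v_1$ from $v_3$ in $\mG$. The degree hypothesis guarantees that the only edges incident to $v_2$ are $v_1\to v_2\to v_3$ (chain) or $v_2\to v_1,\ v_2\to v_3$ (fork), so that on any path between $v_1$ and $v_3$ passing through $v_2$ the vertex $v_2$ is a non-collider and is blocked once we condition on it; the assumption that the skeleton of $\mG_W$ is not complete rules out a direct $v_1$--$v_3$ edge.

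For part (2), given a distribution $Q\in im(\mG_W,\mA)$ I would construct $P\in im(\mG,\mA)$ with $\hat{\pi_W}(P)=Q$ by explicit padding. Fix a reference state $a\in\mA$ and make every vertex of $V\setminus W$ deterministically equal to $a$, i.e. set $\PP(X_w=a\mid X_{pa(w)})=1$ for every configuration of $X_{pa(w)}$; then transplant the factors of $Q$ onto the three special vertices. The degree hypothesis tells us that $pa_{\mG}(v_2)=\{v_1\}$ in the chain case, respectively $pa_{\mG}(v_2)=\emptyset$ in the fork case, so the factor of $Q$ at $v_2$ is already a legitimate CPD for $v_2$ in $\mG$. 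At $v_1$ and $v_3$ I would use the factors of $Q$ while ignoring the (now constant) values supplied by any additional parents, which is admissible precisely because those extra parents are almost surely equal to $a$ and because there is no $v_1$--$v_3$ edge. A topological-order check shows $P$ factorises over $\mG$, and since every padded vertex is almost surely constant, the identity $\hat{\pi_W}(P)=Q$ follows from Lemma \ref{lem_commute}.

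The hard part is the d-separation step in part (1): the in/out-degree bound constrains $v_2$ but says nothing about $v_1$ or $v_3$, so the real work is to argue that conditioning on $\{v_2\}$ leaves no alternative active path between $v_1$ and $v_3$ — in particular one must exclude a common ancestor of $v_1$ and $v_3$ that would open a confounding fork not blocked by $v_2$. This is exactly where the combination of the induced-subgraph shape and the degree-two condition must be used, and it is the step that deserves the most care. By contrast, the verification in part (2) is routine: one only needs to confirm that declaring the non-$W$ vertices deterministic never conflicts with the transplanted CPDs, which is immediate from processing $\mG$ in a topological order.
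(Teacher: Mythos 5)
Your strategy coincides with the paper's own: part (1) is reduced to d\nobreakdash-separation plus the global Markov property of Theorem \ref{equivalence_local_MP}, and part (2) is handled by exhibiting an explicit preimage in $im(\mG,\mA)$ by choosing CPDs for the vertices outside $W$. Your ``deterministic padding'' is a concrete instance of the paper's construction (which instead notes that the marginal at the root of $\mG_W$ can be made arbitrary and then transplants the two remaining CPDs, ignoring extra parents), and your argument for part (2) is sound: the degree hypothesis forces $pa_{\mG}(v_2)=\{v_1\}$ in the chain case and $pa_{\mG}(v_2)=\emptyset$ in the fork case, so the transplanted factors are legitimate CPDs and the marginal on $W$ of the padded distribution is exactly $Q$.

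The genuine gap is precisely the step you flag and then leave open: that $\{v_2\}$ d-separates $v_1$ from $v_3$ in $\mG$. The hypotheses constrain only the edges incident to $v_2$ and the edges among $\{v_1,v_2,v_3\}$; they say nothing about paths between $v_1$ and $v_3$ that avoid $v_2$ altogether. Concretely, take $V=\{u,v_1,v_2,v_3\}$ with edges $u\to v_1$, $u\to v_3$, $v_1\to v_2$, $v_2\to v_3$: the induced subgraph on $W=\{v_1,v_2,v_3\}$ is the chain $\mG_1$ and $v_2$ has total degree two in $\mG$, yet the path $v_1\leftarrow u\to v_3$ is active given $\{v_2\}$, and a BN on this $\mG$ with $X_{v_1}=X_{v_3}=X_u$ and $X_{v_2}$ independent noise violates $X_{v_3}\perp\!\!\!\perp X_{v_1}\mid X_{v_2}$. (An analogous example with a directed path $v_1\to w\to v_3$ defeats the fork case.) So the ``common ancestor'' obstruction you correctly identify as the hard part cannot be excluded from the stated hypotheses, and your proof of (1) is incomplete at exactly that point; closing it requires strengthening the assumptions so that $v_2$ blocks every $v_1$--$v_3$ path not passing through it. For what it is worth, the paper's own proof of (1) simply asserts the d-separation ``by definition,'' so the difficulty lies in the lemma itself rather than only in your write-up --- but as submitted your argument does not establish (1).
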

\begin{proof}
We give the two arguments for completeness.
 \begin{enumerate}
 \item For $\mG_1,\mG_2$ it is clear. For $\mG$ by assumption the vertex that is connected to the other two in $\mG_W$ d-separates the other two vertices in $\mG$ by definition. Hence, the CI-statement follows by the global Markov property of Theorem \ref{equivalence_local_MP}
 \item To show that $im(\mG_W,\mA)\subseteq \hat{\pi_W}(im(\mG,\mA))$ we go through the two cases of DAGs, where we assume w.l.o.g. the indexing in $\mG$ corresponds to the indexes of $\mG_1,\mG_2$ above.
 \begin{itemize}
 \item Let $z_1\in im(\mG_1,\mA)$ be given as a factorisation of the three CPDs denoted $\PP_{\mG_1}(X_{v_1})$, $\PP_{\mG_1}(X_{v_2}|X_{v_1})$, and $\PP_{\mG_1}(X_{v_3}|X_{v_2})$. For $\mG$ we note that the marginal distribution in $v_1$ covers all distributions, i.e. $\hat{\pi_{v_1}}(im(\mG,\mA))=\Delta^\mA$, and as $v_2$ is not connected to other vertices in $\mG$ apart from $v_1,v_3$ we can take the same parameter in $\mG$ as in $\mG_1$ for the CPD $\PP(X_{v_2}|X_{v_1})$, i.e. $\PP_{\mG}(X_{v_2}|X_{v_1}):=\PP_{\mG_1}(X_{v_2}|X_{v_1})$. Concerning $v_3$, we can define the CPD in $\mG$ for this vertex as $\PP_{\mG}(X_{v_3}|X_{pa(v_3)}):=\PP_{\mG_1}(X_{v_3}|X_{v_2})$, which works as $v_2\in pa(v_3)$. Overall we have shown that then $z_1\in \hat{\pi_W}(im(\mG,\mA))$
 \item Let $z_1\in im(\mG_2,\mA)$ be given as a factorisation of the three CPDs denoted $\PP_{\mG_2}(X_{v_2})$, $\PP_{\mG_2}(X_{v_1}|X_{v_2})$, and $\PP_{\mG_2}(X_{v_3}|X_{v_2})$. Again by definition the marginal distribution of $\mG$ in $v_2$ covers all distributions,  i.e. $\hat{\pi_{v_2}}(im(\mG,\mA))=\Delta^\mA$. Then, we can choose the CPDs in $\mG$ for $v_1,v_3$ as  $\PP_{\mG}(X_{v_3}|X_{pa(v_3)}):=\PP_{\mG_2}(X_{v_3}|X_{v_2})$ and $\PP_{\mG}(X_{v_1}|X_{pa(v_1)}):=\PP_{\mG_2}(X_{v_1}|X_{v_2})$. Overall we have shown that then $z_1\in \hat{\pi_W}(im(\mG,\mA))$
 \end{itemize}
 \end{enumerate}
\end{proof}

Lastly, we give the actual proof of Theorem \ref{many_non_D1s}:
\begin{proof}
We will prove the statement by a sequence of reductions on the DAG $\mG$, the states $\mA,\mB$, and the map $f:\mA\to \mB$ as follows.
\begin{enumerate}
\item It is enough to prove \eqref{no_incl_D1} for an $f:\mA\to \mB$ with $|\mA|=|\mB|+1$ that maps exactly two states to the same state in $\mB$. This holds as any surjective reduction function $f$ can be written as a composition of  functions that map exactly two states to the same state.

\item It is enough to show \eqref{no_incl_D1} for an $f:\mA\to \mB$ that maps only $a_1,a_2$ together, i.e. $f(a_1)=f(a_2)$. This follows as any other function $\tilde{f}:\mA\to \mB$ that maps two states of $\mA$ together can be obtained via a composition with a bijection $g:\mA\to\mA$ such that $\tilde{f}=f\circ g$, and as $g$ is a bijection, i.e. $\hat{g}(im(\mG,\mA))=im(\mG,\mA)$.

\item\label{enum_2} 
By definition there is a connected vertex induced subgraph with three vertices with skeleton not equal to the complete graph nor a v.structure. Such a subgraph must be a DAG of one of the forms of $\mG_1,\mG_2$ as below.
\begin{itemize}
\item DAG $\mG_1$:
\begin{center}
\begin{tikzpicture}
    \node[latent] (theta) {$v_1$} ; %
    \node[latent, right=of theta] (z) {$v_2$} ; %
    \node[latent, right=of z] (y) {$v_3$} ; %
  
    \edge {theta} {z} ; %
      
    \edge {z} {y} ; %
\end{tikzpicture}
\end{center}
\item DAG $\mG_2$:

\begin{center}
\begin{tikzpicture}
    \node[latent] (theta) {$v_2$} ; %
    \node[latent, right=of theta] (z) {$v_1$} ; %
    \node[latent, above=of theta] (y) {$v_3$} ; %
  
    \edge {theta} {z} ; %
      
    \edge {theta} {y} ; %
\end{tikzpicture}
\end{center}

\end{itemize}

  We claim that it is enough to prove \eqref{no_incl_D1} for the DAGs $\mG_1$ and $\mG_2$ by contradiction, using Lemma \ref{lem_marginal} and Lemma \ref{lem_commute}. 
  
  The proof by contradiction is as follows. Assume \eqref{no_incl_D1} does not hold but we find (say) an element $z_1\in im(\mG_1,\mA)$ such that $\hat{f}(z_1)\not\in im(\mG_1,\mB)$. Then, by Lemma \ref{lem_marginal} (2) $z_1\in\hat{\pi_W}( im(\mG,\mA))$, hence let $z\in im(\mG,\mA)$ such that $\hat{\pi_W}(z)=z_1$. By assumption that \eqref{no_incl_D1} does not hold, $\hat{f}(z)$ factorises w.r.t. $\mG$. But then by Lemma \ref{lem_marginal} (1) $\hat{\pi_W}(\hat{f}(z))$ factorises w.r.t. $\mG_1$, and by Lemma \ref{lem_commute} (2) $\hat{\pi_W}(\hat{f}(z))=\hat{f}(\hat{\pi_W}(z))=\hat{f}(z_1)$, which is a contradiction.

\item Lastly it is enough to prove  \eqref{no_incl_D1} for the case where $|\mA|=3$ as we can consider the random vector to be on more coordinates by extension.
\end{enumerate}
Hence by (1), (2), and (4) we assume that $\mA=\{a_1,a_2,a_3\},\mB=\{b_1,b_2\}$ with $f(a_1)=f(a_2)=b_1,f(a_3)=b_2$. By (3) it is enough to prove \eqref{no_incl_D1}  for $\mG_1,\mG_2$. As $\mG_1,\mG_2$ have the same skeleton and have no immoralities, they are Markov equivalent by \cite[Theorem 3.8]{koller2009probabilistic}. Therefore $im(\mG_1,\mA)=im(\mG_2,\mA)$, $\hat{f}(im(\mG_1,\mA))=\hat{f}(im(\mG_2,\mA))$ and $im(\mG_1,\mB)=im(\mG_2,\mB)$ and by definition it is enough to give a BN of the form $\mG_1$ as defined above whose reductions does not satisfy the corresponding CI statement. This can be found in example \ref{ex_not_D1}.

\end{proof}
\color{black}

\subsection{Proof Theorem \ref{main_nec_thm_D3}}\label{proof_main_nec_thm_D3}
\begin{proof}
Let $v\in V$ have depth one, with $v:=v_0$, $I=pa(v)=\{v_1,\cdots , v_l\}$, and $b_0,b_1,\cdots b_l\in\mB$ such that $A_0=f^{-1}(b_0),\cdots ,A_l=f^{-1}(b_l)$. By \ref{D3}, the following defines a CPD:
\begin{equation}\label{equ_cpd}\PP(U_{v_0}=b_0|U_{I}=b_{I})=\frac{\PP(U_{v_0}=b_0,U_{v_1}=b_1,\cdots, U_{v_l}=b_l)}{\PP(U_{v_1}=b_1,\cdots, U_{v_l}=b_l)}\end{equation}
Then according to the factorisation of BNs \eqref{factorisation_BN}, we write the CPD of \eqref{equ_cpd} as
$$
\frac{\sum_{a_0\in A_0, \cdots, a_l\in A_l}\PP(X_{v_0}=a_0,\cdots,X_{v_l}=a_l)}{\sum_{a_1\in A_1,\cdots,a_l\in A_l}\PP(X_{v_1}=a_1,\cdots,X_{v_l}=a_l)}
=
\frac{\sum_{a_0\in A_0,\cdots,a_l\in A_l}\PP(X_{v_0}=a_0,\cdots,X_{v_l}=a_l)}{\sum_{a_1\in A_1,\cdots ,a_l\in A_l}\prod_{i=1}^l\alpha_i(a_k)}
$$
$$
=
\frac{\sum_{a_0\in A_0,\cdots,a_l\in A_l}\PP(X_{v_0}=a_0|X_{v_1}=a_1\cdots,X_{v_l}=a_l)\prod_{i=1}^l\alpha_i(a_i)}{\sum_{a_1\in A_1,\cdots,a_l\in A_l}\prod_{i=1}^l\alpha_i(a_i)}
$$
By \ref{D3}, fraction \eqref{equ_cpd} doesn't change value even if we change the initial distribution to another $(\tilde{\alpha}_v)_{v\in I}\in\Delta_{\prod_{v\in I}\mA^v}$ for the corresponding random variables $(U_v[\tilde{\alpha}])_{v\in I}$.
Hence if we choose all $\alpha_{v_i}=\mathbbm{1}_{a_j}$ where $a_j\in A_i$ for all $i\in \{0,\cdots l\}$, equation \eqref{eq:fund1} holds by the resulting equalities for the $v\in V$ in the proof.

\end{proof}
\subsection{Proof Theorem \ref{char_D1}}\label{proof_char_D1_1}
\begin{proof}

$\implies$: 
This follows by Lemma \ref{proof_princ_lemma}.\\
$\impliedby$: Assume the condition of \eqref{fund_equ} is true. Then, by Lemma \ref{proof_princ_lemma}  it is enough if for an arbitrary $v\in V$ and $b_{nd(v)}\in B^{nd(v)}$ where $\PP(U_{nd(v)}=b_{nd(v)})>0$, we  verify that the following equality holds

\begin{equation}
\label{fund_equ2}
\frac{\PP(U_{nd^*(v)}=b_{nd^*(v)})}{\PP(U_{nd(v)}=b_{nd(v)})}=
\frac{\PP(U_{pa^*(v)}=b_{pa^*(v)})}{\PP(U_{pa(v)}=b_{pa(v)})}.
\end{equation}
We calculate the second fraction by taking into account all possibilities for states with particular values in the coordinates $pa^*(v)$, resp. $pa(v)$. Let 
$$B_1:=b_{pa^*(v)}\times B^{nd(v)\setminus pa(v)}, \quad\quad B_2:=b_{pa(v)}\times B^{nd(v)\setminus pa(v)}$$
\begin{equation}
\label{fund_equ3}
\frac{\PP(U_{pa^*(v)}=b_{pa^*(v)})}{\PP(U_{pa(v)}=b_{pa(v)})}=\frac{\sum_{
b_1\in B_1
}\PP(U_{nd^*(v)}=b_1)}{\sum_{
b_2\in B_2
}\PP(U_{nd(v)}=
b_2)}
\end{equation}
Note that whenever the event $\{U_{nd(v)}=b_2\}$ has zero probability, so does $\{U_{nd^*(v)}=(b_v,b_2)\}$, and that we can equivalently write \eqref{fund_equ} as 
\begin{equation}
\label{eq_x}
\PP(U_{nd^*(v)}=\tilde{b}_{nd^*(v)})=c\cdot \PP(U_{nd(v)}=\tilde{b}_{nd(v)}),
\end{equation}
where the constant $c\geq 0$ only depends on the 
coordinates $pa^*(v)$. Hence, if we denote by 
$$N_b:=\{b_2\in B_2|\PP(U_{nd(v)}=b_2)>0\},$$
such that $n_b:=|N_b|\geq 1$, we get that the right-hand side of equation
\eqref{fund_equ3} is equal to $\frac{n_b\cdot c}{n_b}=c$ for $c=\frac{\PP(U_{nd^*(v)}=b_{nd^*(v)})}{\PP(U_{nd(v)}=b_{nd(v)})}$, hence we are done.

\end{proof}
\color{black}

\section{Further examples} \label{examples_appendix}

\begin{example}
Consider the following BN $\mB\mN=(\mG,\PP)$ with DAG $\mG$
\begin{center}
\begin{tikzpicture}
    \node[latent] (theta) {$v_1$} ; %
    \node[latent, right=of theta] (z) {$v_2$} ; %
    \node[latent, right=of z] (y) {$v_3$} ; %
  
    \edge {theta} {z} ; %
      
    \edge {z} {y} ; %
\end{tikzpicture}
\end{center}
with $\mA=\{a_1,a_2,a_3\},\mB=\{b_1,b_2\}$, reduction function $f(a_1)=f(a_2)=b_1,f(a_3)=b_2$, initial distribution $(1/3,1/3,1/3)$ and CPDs all equal
$$
P = 
\begin{pmatrix}
1/2 & 1/4 & 1/4 \\
1/3 & 1/3 & 1/3 \\
0 & 1/2 & 1/2
\end{pmatrix}
.$$
By definition of \ref{D1}, if it holds the following equality between CPDs (also see example \ref{ex_abstract}) must hold:
$$
\PP(U_3=b_2|U_1=b_2,U_2=b_1)=\PP(U_3=b_2|U_2=b_1)
$$
We can write this equation as
$$
\frac{P_3(P_{3,1}P_{1,3}+P_{3,2}P_{2,3})}{P_3(P_{3,1}+P_{3,2})}=\frac{(P_1P_{1,1}+P_2P_{2,1}+P_3P_{3,1})P_{1,3}+(P_1P_{1,2}+P_2P_{2,2}+P_3P_{3,2})P_{2,3}}{(P_1P_{1,1}+P_2P_{2,1}+P_3P_{3,1})+(P_1P_{1,2}+P_2P_{2,2}+P_3P_{3,2})}
$$
which clearly does not hold with the values of $P$ given!
\end{example}

\begin{example}\label{ex_not_DTMC_NHDTMC}
Consider a DTMC with states $\mA=\{a_1,a_2,a_3,a_4\},\mB=\{b_1,b_2,b_3\}$, $f(a_i)=b_i$ for $i\neq 4$ and $f(a_4)=a_1$ with the following transition probabilities started in $a_1$
$$
P = 
\begin{pmatrix}
0 & 1/2 & 1/2 & 0\\
0 & 0 & 0& 1 \\
0 & 0 & 0& 1 \\
1 & 0 & 0& 0
\end{pmatrix}
$$
Then clearly $U_n$ is not a DTMC, but still a NHDTMC.
\end{example}

\begin{example}\label{ex_not_D1}
\color{black}

Consider the following BN $\mB\mN=(\mG,\PP)$ with DAG $\mG$
\begin{center}
\begin{tikzpicture}
    \node[latent] (theta) {$v_1$} ; %
    \node[latent, right=of theta] (z) {$v_2$} ; %
    \node[latent, right=of z] (y) {$v_3$} ; %
  
    \edge {theta} {z} ; %
      
    \edge {z} {y} ; %
\end{tikzpicture}
\end{center}
with $\mA=\{a_1,a_2,a_3\},\mB=\{b_1,b_2\}$, reduction function $f(a_1)=f(a_2)=b_1,f(a_3)=b_2$, initial distribution $(1/3,1/3,1/3)$ and CPDs all equal
$$
P = 
\begin{pmatrix}
1/2 & 1/4 & 1/4 \\
1/3 & 1/3 & 1/3 \\
0 & 1/2 & 1/2
\end{pmatrix}
.$$
By definition of \ref{D1}, if it holds the following equality between CPDs must hold:
$$
\PP(U_3=b_2|U_1=b_2,U_2=b_1)=\PP(U_3=b_2|U_2=b_1)
$$
We can write this equation as
$$
\frac{P_3(P_{3,1}P_{1,3}+P_{3,2}P_{2,3})}{P_3(P_{3,1}+P_{3,2})}=\frac{(P_1P_{1,1}+P_2P_{2,1}+P_3P_{3,1})P_{1,3}+(P_1P_{1,2}+P_2P_{2,2}+P_3P_{3,2})P_{2,3}}{(P_1P_{1,1}+P_2P_{2,1}+P_3P_{3,1})+(P_1P_{1,2}+P_2P_{2,2}+P_3P_{3,2})}
$$
which clearly does not hold with the values of $P$ given!
\end{example}

\begin{example}\label{ex_abstract}\label{ex_D1_not_D2}
Consider the following BN $\mB\mN=(\mG,\PP)$ with DAG $\mG$
\begin{center}
\begin{tikzpicture}
    \node[latent] (theta) {$v_1$} ; %
    \node[latent, right=of theta] (z) {$v_2$} ; %
    \node[latent, right=of z] (y) {$v_3$} ; %
  
    \edge {theta} {z} ; %
      
    \edge {z} {y} ; %
\end{tikzpicture}
\end{center}
with $\mA=\{a_1,a_2,a_3\},\mB=\{b_1,b_2\}$, reduction function $f(a_1)=f(a_2)=b_1,f(a_3)=b_2$, initial distribution $(p^{v_1}_1,p^{v_1}_2,p^{v_1}_3)$ and CPD matrices
$$P^{v_2}=(p^{v_2}_{ij}),\quad P^{v_3}=(p^{v_3}_{ij}),
$$
i.e., e.g., for $1\leq i\leq 3,1\leq j\leq 3$ $p^{v_2}_{ij}=\PP(X_{v_2}=a_i|X_{v_1}=a_j)$.

By definition of \ref{D1}, there are 8 equations to check:
\begin{enumerate}
\item\label{it_1} $
\PP(U_3=b_1|U_2=b_1,U_1=b_1)=\PP(U_3=b_1|U_2=b_1)
$\item\label{it_2}  $
\PP(U_3=b_1|U_2=b_1,U_1=b_2)=\PP(U_3=b_1|U_2=b_1)
$\item\label{it_3}  $
\PP(U_3=b_1|U_2=b_2,U_1=b_1)=\PP(U_3=b_1|U_2=b_2)
$\item\label{it_4}  $
\PP(U_3=b_1|U_2=b_2,U_1=b_2)=\PP(U_3=b_1|U_2=b_2)
$\item\label{it_5}  $
\PP(U_3=b_2|U_2=b_1,U_1=b_1)=\PP(U_3=b_2|U_2=b_1)
$\item\label{it_6}  $
\PP(U_3=b_2|U_2=b_1,U_1=b_2)=\PP(U_3=b_2|U_2=b_1)
$\item\label{it_7}  $
\PP(U_3=b_2|U_2=b_2,U_1=b_1)=\PP(U_3=b_2|U_2=b_2)
$\item \label{it_8} $
\PP(U_3=b_2|U_2=b_2,U_1=b_2)=\PP(U_3=b_2|U_2=b_2)
$
\end{enumerate}
By Lemma \ref{lemma_preimage_singleton}, 4 of these are automatically satisfied as $f^{-1}(b_2)=a_3$, these are \eqref{it_3},\eqref{it_4}, \eqref{it_7} and \eqref{it_8}. Hence, we rewrite the equations for the remaining equations and their CPDs assuming that the conditional events are nonzero event (otherwise a fraction is not defined as a denominator is zero) for convenience. We note however that we can always reformulate 
\begin{equation}\label{label_eq}
\PP(A|B)=\PP(C|D)\text{ as }\PP(A\cap B)\PP(D)=\PP(B)\PP(C\cap D)
\end{equation}
 when we do not want to make a nonzero assumption.
\begin{itemize}
\item \eqref{it_1} $
\PP(U_3=b_1|U_2=b_1,U_1=b_1)=\PP(U_3=b_1|U_2=b_1)
$:
$$LHS=
\frac{P_1^{v1}(P_{1,1}^{v2}(P_{1,1}^{v3}+P_{1,2}^{v3})+P_{1,2}^{v2}(P_{2,1}^{v3}+P_{2,2}^{v3}))+P_2^{v1}(P_{2,1}^{v2}(P_{1,1}^{v3}+P_{1,2}^{v3})+P_{1,2}^{v2}(P_{2,1}^{v3}+P_{2,2}^{v3}))
}{P_1^{v1}(P_{1,1}^{v2}+P_{1,2}^{v2})+P_2^{v1}(P_{2,1}^{v2}+P_{2,2}^{v2})}$$
$$RHS=\frac{(P_1^{v1}P_{1,1}^{v2}+P_2^{v1}P_{2,1}^{v2}+P_3^{v1}P_{3,1}^{v2})(P_{1,1}^{v3}+P_{1,2}^{v3})
+
(P_1^{v1}P_{1,2}^{v2}+P_2^{v1}P_{2,2}^{v2}+P_3^{v1}P_{3,2}^{v2})(P_{2,1}^{v3}+P_{2,2}^{v3})
}{
P_1^{v1}(P_{1,1}^{v2}+P_{1,2}^{v2})+P_2^{v1}(P_{2,1}^{v2}+P_{2,2}^{v2})+P_3^{v1}(P_{3,1}^{v2}+P_{3,2}^{v2})}
$$

\item \eqref{it_2}
$
\PP(U_3=b_1|U_2=b_1,U_1=b_2)=\PP(U_3=b_1|U_2=b_1)
$:
$$LHS=
\frac{P_3^{v1}(P_{3,1}^{v2}(P_{1,1}^{v3}+P_{1,2}^{v3})+P_{3,2}^{v2}(P_{2,1}^{v3}+P_{2,2}^{v3}))
}{P_3^{v1}(P_{3,1}^{v2}+P_{3,2}^{v2})}$$
\begin{center}
RHS as above (i.e., as for \eqref{it_1})
\end{center}
\item \eqref{it_5}$
\PP(U_3=b_2|U_2=b_1,U_1=b_1)=\PP(U_3=b_2|U_2=b_1)
$:
$$LHS=
\frac{P_1^{v1}(P_{1,1}^{v2}P_{1,3}^{v3}+P_{1,2}^{v2}P_{2,3}^{v3})+P_2^{v1}(P_{2,1}^{v2}P_{1,3}^{v3}+P_{2,2}^{v2}P_{2,3}^{v3})
}{P_1^{v1}(P_{1,1}^{v2}+P_{1,2}^{v2})+P_2^{v1}(P_{2,1}^{v2}+P_{2,2}^{v2})}$$

$$RHS=
\frac{(P_1^{v1}P_{1,1}^{v2}+P_2^{v1}P_{2,1}^{v2}+P_3^{v1}P_{3,1}^{v2})P_{1,3}^{v3}+(P_1^{v1}P_{1,2}^{v2}+P_2^{v1}P_{2,2}^{v2}+P_3^{v1}P_{3,2}^{v2})P_{2,3}^{v3}}{(P_1^{v1}P_{1,1}^{v2}+P_2^{v1}P_{2,1}^{v2}+P_3^{v1}P_{3,1}^{v2})+(P_1^{v1}P_{1,2}^{v2}+P_2^{v1}P_{2,2}^{v2}+P_3^{v1}P_{3,2}^{v2})}$$
\item \eqref{it_6} $
\PP(U_3=b_2|U_2=b_1,U_1=b_2)=\PP(U_3=b_2|U_2=b_1)
$:
$$
LHS=\frac{P_3^{v1}(P_{3,1}^{v2}P_{1,3}^{v3}+P_{3,2}^{v2}P_{2,3}^{v3})}{P_3^{v1}(P_{3,1}^{v2}+P_{3,2}^{v2})}
$$
\begin{center}
RHS as above (i.e., as for \eqref{it_5})
\end{center}
Hence we have that the above equations in form of \eqref{label_eq} (i.e. without assuming nonzero) hold for a given BN $\mB\mN$ if and only if $(\mB\mN,f)$ satisfies \ref{D1}. Furthermore, for a given BN $\mB\mN$, $(\mB\mN,f) $satisfies \ref{D2} if and only if the corresponding equations hold for all $(p^{v_1}_1,p^{v_1}_2,p^{v_1}_3)\in \Delta_{\mA^{v_1}}$. Similar equations are available for \ref{D3} through Theorem \ref{main_suff_nec_thmD3}.
\end{itemize}

\end{example}

\begin{example}\label{mG_3}
Consider the following BN $\mB\mN=(\mG,\PP)$ with DAG $\mG$
\begin{center}
\begin{tikzpicture}
    \node[latent] (theta) {$v_1$} ; %
    \node[latent, right=of theta] (z) {$v_2$} ; %
    \node[latent, above=of theta] (y) {$v_3$} ; %
  
    \edge {z} {theta} ; %
      
    \edge {y} {theta} ; %
\end{tikzpicture}
\end{center}
with $\mA=\{a_1,a_2,a_3\},\mB=\{b_1,b_2\}$, reduction function $f(a_1)=f(a_2)=b_1,f(a_3)=b_2$, initial distribution on $v_1$ and $v_3$ given. If the BN on $\mA$ has random vector $(X_1,X_2,X_3)$ then $X_2\perp\!\!\!\perp X_3$ and it normally loses independence when conditioned on $X_1$. It is easy to see that if $(U_1,U_2,U_3)$ with $U_i=f(X_i)$ then also $U_2\perp\!\!\!\perp U_3$, and these random vectors factorise with the same DAG.


\end{example}

 \bibliographystyle{plain}

 \bibliography{references} 

\end{document}